\documentclass[a4paper,10pt]{article}


\usepackage{amsfonts}
\usepackage{amssymb}
\usepackage{amsmath}
\usepackage{hyperref}
\usepackage{cleveref}
\usepackage{amsthm}
\usepackage{graphicx}
\usepackage[caption=false]{subfig}
\usepackage{tikz}


\title{{Voronoi Cells of Lattices with Respect to Arbitrary Norms}
}

\author{
  Johannes Bl\"omer
  \and
  Kathl\'en Kohn
}

\usepackage{amsopn}
\DeclareMathOperator{\vol}{vol}

\DeclareMathOperator{\intt}{int}

\newcommand{\svp}{\textrm{\sc Svp}}
\newcommand{\cvp}{\textrm{\sc Cvp}}


\theoremstyle{plain}
\newtheorem{theorem}{Theorem}
\newtheorem{proposition}[theorem]{Proposition}
\newtheorem{corollary}[theorem]{Corollary}
\newtheorem{lemma}[theorem]{Lemma}

\theoremstyle{definition}
\newtheorem{definition}[theorem]{Definition}

\theoremstyle{remark}
\newtheorem{claim}{Claim}

\newtheorem{proofofmaintheorem}{Proof of Theorem~\ref{thm:main}}
\newtheorem{proofoftrivialbound}{Proof of Proposition~\ref{prop:trivial_upper_bound}}
\newtheorem{proofofVRinduceFacets}{Proof of Proposition~\ref{prop:Voronoi_relevant_has_facet}}
\newtheorem{proofof6VR}{Proof of Proposition~\ref{prop:2dimensions_strictly_convex_exact}}
\newtheorem{proofofVRdefineVC}{Proof of Theorme~\ref{thm:Voronoi-relevant_vectors_define_Voronoi-cell}}
\newtheorem{proofofbijection}{Proof of Theorem~\ref{thm:bijection_facets_Voronoi-relevant}}
\newtheorem{proofofgenVRcomb}{Proof of Proposition~\ref{prop:2dimensions_1norm}}
\newtheorem{proofofGVRdefineVC}{Proof of Theorem~\ref{thm:generalized_Voronoi-relevant_vectors_define_Voronoi-cell}}
\newtheorem{proofofclaim}{Proof}




\begin{document}

\maketitle

\begin{abstract}
We study the geometry and complexity
of Voronoi cells of lattices with respect to arbitrary norms. On the
positive side, we show for strictly convex and smooth norms that the
geometry of Voronoi cells of lattices in any dimension is similar to the
Euclidean case, i.e., the Voronoi cells are defined by the so-called
Voronoi-relevant vectors and the facets of a Voronoi cell
are in one-to-one correspondence with these
vectors. On the negative side, we show that Voronoi
cells are combinatorially much more
complicated for arbitrary strictly convex and smooth norms than in the Euclidean case. In particular, we construct a
family of three-dimensional lattices whose number of
Voronoi-relevant vectors with respect to the $\ell_3$-norm is
unbounded. Our result indicates, that the break through single exponential time algorithm of Micciancio and Voulgaris for solving the shortest and closest vector problem in the Euclidean norm cannot be extended to achieve deterministic
single exponential time algorithms for lattice problems with respect
to arbitrary $\ell_p$-norms. In fact, the algorithm of Micciancio and Voulgaris and its
run time analysis crucially depend on the fact that for the Euclidean
norm the number of Voronoi-relevant vectors is single exponential in
the lattice dimension.
\end{abstract}

\section{Introduction}
We study the geometry and complexity of Voronoi cells of lattices with respect to arbitrary norms. Our original motivation for studying these problems stemmed from recent algorithms to solve problems in the geometry of numbers. However, the questions we consider in this paper are interesting and fundamental research questions in the geometry of numbers, that so far have received surprisingly little attention.  
%

A \emph{lattice} is a discrete additive subgroup $\Lambda$ of $\mathbb{R}^n$. Its \emph{rank} is the dimension of the $\mathbb{R}$-subspace it spans. For simplicity, we only consider lattices of \emph{full rank}, i.e., with rank $n$. Each such lattice $\Lambda$ has a basis $(b_1, \ldots, b_n)$ such that $\Lambda$ equals
\begin{align*}
\mathcal{L}(b_1, \ldots, b_n) :=
\left\lbrace \sum \limits_{i=1}^n z_i b_i \;\middle\vert\; z_1, \ldots, z_n \in \mathbb{Z} \right\rbrace.
\end{align*}
The most important computational lattice problems are the Shortest Vector Problem ($\svp$) and the Closest Vector Problem ($\cvp$). Both problems have numerous applications in combinatorial optimization, complexity theory, communication theory, and cryptography. In $\svp$, given a lattice $\Lambda={\mathcal L}(b_1,\ldots,b_n)$, a non-zero vector in $\Lambda$ with minimum length has to be computed. In $\cvp$, given a lattice $\Lambda$ and a target vector $t\in \mathbb{R}^n$, a vector in $\Lambda$  with  minimum distance to $t$ has to be computed.  Both, $\svp$ and $\cvp$ are best studied for the Euclidean norm, but for applications other norms like polyhedral norms or general $\ell_p$-norms are important as well. 

In particular, when studying $\cvp$, an important and natural geometric object defined by a lattice and a norm, is the \emph{Voronoi cell} or \emph{Voronoi region}. For a lattice $\Lambda$ and a norm $\Vert\cdot\Vert$, this is defined as 
\begin{align}\label{def-VC}
\mathcal{V}(\Lambda, \Vert \cdot \Vert) := \lbrace x \in \mathbb{R}^n \mid \forall v \in \Lambda: \Vert x \Vert \leq \Vert x-v \Vert \rbrace,
\end{align}
i.e., it is the the set of points closer to the origin than to any other lattice point (with respect to the given norm). For the Euclidean, or $\ell_2$, norm Voronoi cells of lattices have been studied intensively. For arbitrary norms, on the other hand, very little is known about the geometrical and combinatorial properties of Voronoi cells. 

\paragraph{Properties of Voronoi cells} It is easy to see that for a lattice $\Lambda$ and any norm $\Vert\cdot \Vert$ the Voronoi cell $\mathcal{V}(\Lambda, \Vert \cdot \Vert)$ is centrally symmetric and star-shaped. For the Euclidean norm, the Voronoi cell of a lattice in $\mathbb{R}^n$ is a polytope  with at most $2(2^{n}-1)$ facets. In particular, in the definition of the Voronoi cell of a lattice in $\mathbb{R}^n$ only $2(2^{n}-1)$ lattice vectors have to considered in (\ref{def-VC}), namely those vectors $v$ for which there exists a point that is closer to $v$ and the origin $0$ than to any other lattice vector. Such vectors are called \emph{Voronoi-relevant}. For the Euclidean norm, every facet of the Voronoi cell of a lattice is contained in a bisector between $0$ and a Voronoi-relevant vector. 
These are classical results going back to Minkowski and Voronoi (see for example~\cite{conway2013sphere} and~\cite{AEVZ02}). Recently it has also been shown that it is computationally hard to determine the exact number of facets of  the Voronoi cell of a lattice, i.e., the problem is $\#$P hard (see~\cite{dutour2009complexity}).

For Voronoi cells of lattices with respect to norms other than the Euclidean norm much less is known. For an arbitrary norm Voronoi cells of lattices, in general, are not convex. More precisely, a result by Mann~\cite{mann1935untersuchungen} states that if for all lattices the Voronoi cells with the respect to a given norm are convex, then the norm is Euclidean. For strictly convex norms (to be defined below) Horv\'{a}rth showed that bisectors are homeomorphic to hyperplanes~\cite{bisector_hyperplane}. But, prior to this work, it was not known whether Voronoi-relevant vectors and bisectors between Voronoi-relevant vectors and $0$ determine the Voronoi cell of a lattice with respect to a strictly convex norm. For norms that are not strictly convex, like polyhedral norms, bisectors are not homeomorphic to hyperplanes. This implies that for these norms the geometry of Voronoi cells differs significantly from the geometry of Voronoi cells with respect to strictly convex norms like the Euclidean norm. In~\cite{deza2015voronoi} Deza et al. describe algorithms to compute the Voronoi cell of a lattice with respect to a polyhedral norm.   Finally, the only upper bound on the combinatorial complexity of Voronoi cells of lattices with respect to arbitrary norms that we are aware of bounds the number of Voronoi-relevant vectors of a lattice in terms of the rank, the covering radius, and the first successive minimum of the lattice (see Theorem~\ref{prop:trivial_upper_bound} for
 details). Finally, let us remark that for finite point sets, instead of lattices, much more is known about their Voronoi cells, or Voronoi diagrams, with respect to arbitrary norms. See for example the surveys~\cite{martini2004geometry,aurenhammer1991voronoi}.

\paragraph{Voronoi cells and the Micciancio Voulgaris algorithm}
 The question whether the algorithm by Micciancio and Voulgaris for solving $\svp$ and $\cvp$ for the Euclidean norm~\cite{MV13} can be generalized to more general norms was our original motivation for studying Voronoi cells of lattices with respect to arbitrary norms.

$\svp$ and $\cvp$ are computationally hard problems. If $\textbf{P}\ne \textbf{NP}$, then for all $\ell_p$-norms ($p \in \mathbb{R}_{>1}$) and lattices with rank $n$, $\cvp$ cannot be solved approximately with factor $n^{c/\log\log(n)}$ for some $c>0$~\cite{DKS98}. Under stronger, but still reasonable assumptions, the same inapproximability result holds for $\svp$~\cite{haviv2012tensor}. Currently, algorithms by Micciancio and Voulgaris~\cite{MV13} and by Aggarwal et al.~\cite{ADS15} are the only deterministic single exponential time (in the rank of the lattice) algorithms solving $\cvp$ exactly. Both algorithms only work for the Euclidean norm. The single exponential time and space complexity of the algorithm by Micciancio and Voulgaris crucially depends on the above mentioned fact that for any lattice in $\mathbb{R}^n$ the number of Voronoi-relevant vectors is bounded by $2(2^n-1)$. The algorithm presented in~\cite{ADS15} uses Gaussian-like distributions on lattices. There exist other single exponential time algorithms to solve $\cvp$ with approximation factor $(1+\epsilon),\epsilon>0$ arbitrary. Some of these algorithms work for general norms or even for semi-norms~\cite{AKS02,BN09,DPV11, DV12, DK13}. Hence the main open question in this area is whether there exist single exponential time and space algorithms solving $\cvp$ exactly for norms other than the Euclidean norm. In particular, Micciancio and Voulgaris already asked whether their techniques can be generalized and extended to obtain such an algorithm. One of the main results in this paper shows that, without significant modifications and extensions, this is unlikely to be possible. 
On the positive side, we show that the structure of Voronoi cells of lattices with respect to many norms resembles the situation in the Euclidean case. However, we also show that already in $\mathbb{R}^3$ and for the $\ell_3$-norm, the complexity of the Voronoi cells differs dramatically from the Euclidean case. More precisely, we construct a sequence of lattices $\Lambda_k, k\in \mathbb{Z}_{>0}$, such that $\Lambda_k$ has at least $k$ Voronoi-relevant vectors. Hence, without further restrictions and significant modifications it seems unlikely that the techniques by Micciancio and Voulgaris can be generalized to obtain deterministic single exponential time and space algorithms for $\svp$ and $\cvp$ with respect to norms other than the Eucidean norm. Note, that although our results indicate that the algorithm by Micciancio and Voulgaris cannot be generalized directly to norms other than the Euclidean norm, it is still useful to obtain single exponential time algorithms for versions of lattice problems with respect to norms other than the Euclidean norm. In fact, the single exponential time algorithms for  approximate versions of $\cvp$ in general norms mentioned above (\cite{DV12,DK13}), rely among other techniques, on Micciancio's and Voulgaris' algorithm. 

Although we present our construction for lattices with an unbounded number of Voronoi-relevant vectors only for the $\ell_3$-norm, it will be clear from the construction that similar results hold for more general norms, e.g., all $\ell_p$-norms for $p \in \mathbb{R}_{>2}$.

\section{Overview of Results and Roadmap}
\label{sec:results}

We show that not all lattice vectors have to be considered in the definition of Voronoi cells in (\ref{def-VC}), but that a finite set of vectors is enough, namely for strictly convex and smooth norms $\Vert\cdot\Vert$ the  \emph{Voronoi-relevant vectors} are sufficient. For general norms, \emph{weak Voronoi-relevant vectors} have to be considered.
\begin{definition}
\label{defn:Voronoi_relevant}
Let $\Vert \cdot \Vert: \mathbb{R}^n \to \mathbb{R}_{\geq 0}$ be a norm and let $\Lambda \subseteq \mathbb{R}^n$ be a lattice.
\begin{itemize}
  \item  A lattice vector $v \in \Lambda \setminus \lbrace 0 \rbrace$ is a \emph{Voronoi-relevant vector} if there is some $x \in \mathbb{R}^n$ such that $\Vert x \Vert = \Vert x-v \Vert < \Vert x-w \Vert$ holds for all $w \in \Lambda \setminus \lbrace 0,v \rbrace$.
  \item A lattice vector $v \in \Lambda \setminus \lbrace 0 \rbrace$ is a \emph{weak Voronoi-relevant vector} if there is some $x \in \mathbb{R}^n$ such that $\Vert x \Vert = \Vert x-v \Vert \leq \Vert x-w \Vert$ holds for all $w \in \Lambda$.
\end{itemize}
\end{definition}
A norm $\Vert \cdot \Vert: \mathbb{R}^n \to \mathbb{R}_{\geq 0}$ is said to be \emph{strictly convex} if for all distinct $x,y \in \mathbb{R}^n$ with $\Vert x \Vert = \Vert y \Vert$ and all $\tau \in (0,1)$ we have that $\Vert \tau x + (1-\tau)y \Vert < \Vert x \Vert$.
We give a formal definition for smooth norms in Section~\ref{sec:geom}.
Now, we formulate our first result on Voronoi cells precisely.
\begin{theorem}
\label{thm:Voronoi-relevant_vectors_define_Voronoi-cell}
For every lattice $\Lambda \subseteq \mathbb{R}^n$ and every strictly convex and smooth norm~$\Vert \cdot \Vert$, the Voronoi cell $\mathcal{V}(\Lambda, \Vert \cdot \Vert)$ is equal to
\begin{align*}
\tilde{\mathcal{V}}(\Lambda, \Vert \cdot \Vert)
:= \left\lbrace x \in \mathbb{R}^n \;\middle\vert\; \begin{array}{l}
\forall v \in \Lambda \text{ Voronoi-relevant with} \\
\text{respect to } \Vert \cdot \Vert:  \Vert x \Vert \leq \Vert x-v \Vert
\end{array} \right\rbrace.
\end{align*}
For two-dimensional lattices, smoothness of the norm is not necessary.
\end{theorem}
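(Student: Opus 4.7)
The inclusion $\mathcal{V}(\Lambda,\Vert\cdot\Vert) \subseteq \tilde{\mathcal{V}}(\Lambda,\Vert\cdot\Vert)$ is immediate, since $\tilde{\mathcal{V}}$ is cut out by a sub-collection of the halfspace constraints defining $\mathcal{V}$. The plan for the converse is by contradiction: assume $x \in \tilde{\mathcal{V}} \setminus \mathcal{V}$ and exhibit a Voronoi-relevant vector $v$ with $\Vert x-v\Vert<\Vert x\Vert$, which will contradict the definition of $\tilde{\mathcal{V}}$. Discreteness of $\Lambda$ provides a closest lattice vector $v^\star$ to $x$; it is nonzero and satisfies $\Vert x-v^\star\Vert<\Vert x\Vert$ by hypothesis.

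The first step is to locate a boundary point of $\mathcal{V}$ together with its tied lattice vectors. I would follow the segment $\gamma(t):=tx$ from $0\in\mathcal{V}$ to $x\notin\mathcal{V}$ and set $t^\star := \sup\{t\in[0,1] : \gamma(t)\in\mathcal{V}\}$. Closedness of $\mathcal{V}$ yields $t^\star\in(0,1)$ and $y := \gamma(t^\star)\in\partial\mathcal{V}$, so the \emph{tied set} $S := \{w\in\Lambda\setminus\{0\} : \Vert y-w\Vert = \Vert y\Vert\}$ is nonempty (otherwise $y$ would be interior to $\mathcal{V}$) and finite (it sits inside the closed $\Vert\cdot\Vert$-ball of radius $2\Vert y\Vert$ about $0$), while $\Vert y-w\Vert>\Vert y\Vert$ for every other nonzero $w$.

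The second step is to extract a Voronoi-relevant vector from $S$. If $|S|=1$, the unique element is Voronoi-relevant with witness $y$. Otherwise, for any chosen $v\in S$, strict convexity makes the bisector $B_v := \{z : \Vert z\Vert = \Vert z-v\Vert\}$ a topological copy of $\mathbb{R}^{n-1}$ by Horv\'ath's theorem, and smoothness promotes it to a smooth $(n-1)$-dimensional hypersurface; each other $B_w$ with $w\in S\setminus\{v\}$ meets $B_v$ in a set of strictly smaller dimension, so an arbitrarily small displacement of $y$ within $B_v$ lands at a point witnessing the Voronoi-relevance of $v$. In dimension two, bisectors are topological arcs whose pairwise intersections are discrete, so the perturbation succeeds without any smoothness hypothesis, accounting for the planar refinement in the statement.

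The genuinely difficult third step, which I expect to be the main obstacle, is ensuring that the Voronoi-relevant vector produced above satisfies the strict inequality $\Vert x - v\Vert < \Vert x\Vert$ at $x$ itself, not merely along $\gamma$ just past $t^\star$. My plan is to exploit convexity of the auxiliary functions $\varphi_w(t) := \Vert tx - w\Vert - t\Vert x\Vert$: each is convex with $\varphi_w(0) = \Vert w\Vert > 0$, and any convex function with positive value at $0$ and negative value at $1$ has a unique zero in $(0,1)$ and stays strictly negative thereafter. In particular $\varphi_{v^\star}$ has a unique zero $t_0\in(0,1)$ with $\varphi_{v^\star}<0$ on $(t_0,1]$, and $\varphi_{v^\star}(t^\star)\geq 0$ (since $\gamma(t^\star)\in\mathcal{V}$) forces $t^\star\leq t_0$. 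In the favorable case $t^\star = t_0$, $v^\star\in S$ and the perturbation of the previous paragraph can be steered to produce $v^\star$ itself as the Voronoi-relevant vector, so the hypothesis $\Vert x-v^\star\Vert<\Vert x\Vert$ closes the contradiction at once. The residual case $t^\star < t_0$, where $\gamma$ first exits $\mathcal{V}$ through a bisector $B_w$ with $w\neq v^\star$, requires iterating the entire exit-time analysis along the finite sequence of Voronoi cells $\mathcal{V}+w_1,\ldots,\mathcal{V}+v^\star$ that $\gamma$ traverses en route to $x$; the delicate combinatorial heart of the proof is to show that the last transition before $t=1$, which must enter $\mathcal{V}+v^\star$, is associated with a Voronoi-relevant vector strictly violating the constraint at $x$, and it is here that strict convexity and smoothness of the norm are most crucially used.
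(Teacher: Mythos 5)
Your skeleton (trivial inclusion; exit point $y=t^\star x$ of the ray; dimension count on bisector intersections to extract a Voronoi-relevant vector from the tied set $S$; transfer back to $x$) is the same as the paper's, but two steps are genuinely broken. First, the extraction step is false as stated: it is not true that \emph{every} $v\in S$ is Voronoi-relevant, and perturbing $y$ inside $B_v$ away from the other bisectors does not produce a witness, because at the perturbed point $z$ one only obtains $\Vert z-w\Vert\neq\Vert z\Vert$ for $w\in S\setminus\{v\}$, and the inequality may go the wrong way. Concretely, for $\Lambda=\mathbb{Z}^2$ with the Euclidean norm and $y=(1/2,1/2)$, the vector $v=(1,1)$ lies in $S$ but is not Voronoi-relevant: every point of $B_v\setminus\{y\}$ near $y$ is strictly closer to $e_1$ or to $e_2$ than to $0$. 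What is true, and what the paper proves, is that \emph{some} $v\in S$ is Voronoi-relevant; for this one must perturb within the boundary $\partial\mathcal{V}$ itself, and one needs the nontrivial fact that $\partial\mathcal{V}$ is locally an $(n-1)$-manifold (the paper obtains this from the central-projection homeomorphism $\partial\mathcal{V}\cong S^{n-1}$ of Proposition~\ref{prop:Voronoi_cell_boundary_homeomorphic_to_sphere}), so that it cannot be covered by the $(n-2)$-dimensional pairwise intersections provided by Proposition~\ref{prop:trisector_is_(n-2)-dimensional}. Your proposal supplies neither this fact nor the selection of the correct $v$.

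Second, your ``genuinely difficult third step'' is a detour whose residual case you never carry out, and mere convexity of $\varphi_w$ cannot close it: a convex function that is positive at $0$ and vanishes at $t^\star$ need not be negative for $t>t^\star$, so for a general $w\in S$ you cannot conclude anything at $t=1$ from convexity alone; this is why you are forced back onto $v^\star$ and an unproved ``iteration along the sequence of cells''. The correct and short argument is Lemma~\ref{lem:star}(2) of the paper: strict convexity of the norm alone gives that $\Vert y\Vert=\Vert y-w\Vert$ with $y=t^\star x$ and $t^\star\in(0,1)$ forces $\Vert x-w\Vert<\Vert x\Vert$. Hence any Voronoi-relevant $w$ tied at the exit point already violates the defining inequality of $\tilde{\mathcal{V}}$ strictly at $x$; no case distinction on $v^\star$ and no traversal of intermediate cells is needed. (The paper packages this by first proving the statement for the strict Voronoi cell and then rescaling along the ray, but the content is this one-line use of strict convexity.)
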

We will see that Theorem~\ref{thm:Voronoi-relevant_vectors_define_Voronoi-cell} does not hold for non-strictly convex norms, not even in two dimensions.
Instead we prove the following weaker result.
\begin{theorem}
\label{thm:generalized_Voronoi-relevant_vectors_define_Voronoi-cell}
For every lattice $\Lambda \subseteq \mathbb{R}^n$ and every norm $\Vert \cdot \Vert$, we have that the Voronoi cell $\mathcal{V}(\Lambda, \Vert \cdot \Vert)$ is equal to
\begin{align*}
\tilde{\mathcal{V}}^{(g)}(\Lambda, \Vert \cdot \Vert)
:= \left\lbrace x \in \mathbb{R}^n \;\middle\vert\; \begin{array}{l}
\forall v \in \Lambda \text{ weak Voronoi-relevant} \\
\text{with respect to } \Vert \cdot \Vert:  \Vert x \Vert \leq \Vert x-v \Vert
\end{array} \right\rbrace.
\end{align*}
\end{theorem}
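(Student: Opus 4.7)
The inclusion $\mathcal{V}(\Lambda, \Vert \cdot \Vert) \subseteq \tilde{\mathcal{V}}^{(g)}(\Lambda, \Vert \cdot \Vert)$ is immediate, since every defining inequality of $\tilde{\mathcal{V}}^{(g)}$ is already a defining inequality of $\mathcal{V}$.

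For the reverse inclusion I would argue the contrapositive: given $x \notin \mathcal{V}$, produce a weak Voronoi-relevant vector $v'$ with $\Vert x - v' \Vert < \Vert x \Vert$, so $x \notin \tilde{\mathcal{V}}^{(g)}$. Let $v^* \in \Lambda$ minimize $\Vert x - v \Vert$ over $v \in \Lambda$, which is attained by discreteness of $\Lambda$. Since $x \notin \mathcal{V}$ there is some $v$ strictly closer to $x$ than $0$, so $v^* \neq 0$ and $\Vert x - v^* \Vert < \Vert x \Vert$. A quick calculation shows that $y := x - v^*$ lies in $\mathcal{V}$: for any $w \in \Lambda$, $\Vert y - w \Vert = \Vert x - (v^* + w) \Vert \geq \Vert x - v^* \Vert = \Vert y \Vert$ by the minimality of $v^*$.

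Next I would connect $x \notin \mathcal{V}$ and $y \in \mathcal{V}$ by the segment $y_\lambda := x - \lambda v^*$, $\lambda \in [0, 1]$, and define $\lambda^* := \inf\{\lambda \in [0,1] : y_\lambda \in \mathcal{V}\}$. Closedness of $\mathcal{V}$ and $y_0 = x \notin \mathcal{V}$ give $\lambda^* > 0$ and $y_{\lambda^*} \in \mathcal{V}$. For each $\lambda < \lambda^*$ choose $w_\lambda \in \Lambda$ with $\Vert y_\lambda - w_\lambda \Vert < \Vert y_\lambda \Vert$; since all such $w_\lambda$ lie in a uniformly bounded subset of $\Lambda$, by discreteness the pigeonhole principle yields a single $w^* \in \Lambda$ and a sequence $\lambda_n \uparrow \lambda^*$ with $w_{\lambda_n} = w^*$. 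Passing to the limit gives $\Vert y_{\lambda^*} - w^* \Vert \leq \Vert y_{\lambda^*} \Vert$, and combining with $y_{\lambda^*} \in \mathcal{V}$ forces equality. Thus $0$ and $w^*$ are both nearest lattice vectors to $y_{\lambda^*}$, so $w^*$ is weak Voronoi-relevant.

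The main obstacle is to transfer the strict inequality $\Vert y_{\lambda_n} - w^* \Vert < \Vert y_{\lambda_n} \Vert$, which we only know near $\lambda^*$, back to $\lambda = 0$ to obtain $\Vert x - w^* \Vert < \Vert x \Vert$ and thus a contradiction with $x \in \tilde{\mathcal{V}}^{(g)}$. If $\Vert x - w^* \Vert < \Vert x \Vert$ holds outright we are done. Otherwise the continuous function $f(\lambda) := \Vert y_\lambda - w^* \Vert - \Vert y_\lambda \Vert$ is non-negative at $\lambda = 0$, vanishes at $\lambda^*$, and is strictly negative on a sequence approaching $\lambda^*$, so by the intermediate value theorem there is some $\sigma \in (0, \lambda^*)$ with $f(\sigma) = 0$ and $y_\sigma \notin \mathcal{V}$. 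At $y_\sigma$ some $u \in \Lambda \setminus \{0, w^*\}$ is strictly closer than both $0$ and $w^*$, and I would then restart the same construction on the shorter segment $[0, \sigma]$ with $u$ in place of $v^*$. I expect termination of this iteration to follow from the discreteness of $\Lambda$, since at each step the ``strict closer'' contender is replaced by one that is strictly nearer, and only finitely many lattice vectors lie inside the relevant ball; when the iteration halts, it produces the desired weak Voronoi-relevant vector $v'$ with $\Vert x - v' \Vert < \Vert x \Vert$. Making this termination argument fully rigorous for arbitrary (in particular non-strictly-convex) norms is the main technical difficulty, since convexity of the bisectors cannot be invoked to shortcut the iteration as in Theorem~\ref{thm:Voronoi-relevant_vectors_define_Voronoi-cell}.
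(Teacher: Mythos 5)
Your first inclusion and the contrapositive setup are fine, and the middle portion of your argument is correct as far as it goes: the pigeonhole/limit construction does produce a lattice vector $w^*$ that is weak Voronoi-relevant (witnessed by $y_{\lambda^*}$). The genuine gap is exactly the one you flag yourself: you never establish $\Vert x - w^* \Vert < \Vert x \Vert$, which is what is needed to conclude $x \notin \tilde{\mathcal{V}}^{(g)}(\Lambda, \Vert\cdot\Vert)$. The proposed repair -- iterating over shorter segments with new contenders $u$ -- is not a proof: the ``restart'' changes the direction of the segment each time (from $x - \lambda v^*$ to some $x - \lambda u$), there is no monotone quantity that decreases with each round, and the same lattice vector can reappear as a contender at different points of different segments, so ``finitely many lattice vectors in the relevant ball'' does not by itself force termination. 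As written, the argument is incomplete.

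The difficulty dissolves if you choose the segment differently: contract $x$ toward the origin rather than toward the nearest lattice point, which is what the paper does. Let $\lbrace u_1,\ldots,u_k\rbrace = \lbrace u \in \Lambda : \Vert x-u\Vert < \Vert x \Vert\rbrace$ (finite by discreteness, nonempty since $x \notin \mathcal{V}$). For each $i$ the intermediate value theorem gives $\tau_i \in (0,1)$ with $\Vert \tau_i x - u_i \Vert = \tau_i \Vert x \Vert$; take $\tau_j$ minimal. The first part of Lemma~\ref{lem:star} (star-shapedness of every $\mathcal{H}_{\Vert\cdot\Vert}^{\leq}(0,v)$ about $0$) shows that $\tau_j x$ still satisfies all the other constraints, so $\tau_j x \in \mathcal{V}(\Lambda,\Vert\cdot\Vert)$ and $u_j$ is weak Voronoi-relevant. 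The point your construction misses is that here the winning vector $u_j$ is by construction one of the $u_i$, i.e.\ already strictly closer to $x$ than $0$ is, so the conclusion $\Vert x - u_j\Vert < \Vert x\Vert$ comes for free and no iteration is needed. Your segment toward $v^*$ forfeits both of these features: star-shapedness about $0$ says nothing about points of the form $x - \lambda v^*$, and the vector $w^*$ detected at the crossing need not be one that is close to $x$.
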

When considering the complexity of the Voronoi cell of a given lattice, we are particularly interested in the number of \emph{facets} of that Voronoi cell.
Intuitively, such a facet is an at least $(n-1)$-dimensional boundary part of the Voronoi cell containing all points that have the same distance to 0 than to some fixed non-zero lattice vector. 
For the formal definition we denote for a given norm $\Vert \cdot \Vert$ on $\mathbb{R}^n$, a point $x \in \mathbb{R}^n$ and $\delta \in \mathbb{R}_{>0}$, the (open) \emph{$\Vert \cdot \Vert$-ball} around $x$ with radius $\delta$ by $\mathcal{B}_{\Vert \cdot \Vert, \delta}(x) := \lbrace y \in \mathbb{R}^n \mid \Vert y-x \Vert < \delta \rbrace$.
The \emph{bisector} of $a,b \in \mathbb{R}^n, a \neq b$ is $\mathcal{H}_{\Vert \cdot \Vert}^=(a,b) := \lbrace x \in \mathbb{R}^n \mid \Vert x-a \Vert = \Vert x-b \Vert \rbrace$.
\begin{definition}
\label{defn:facet}
For a lattice $\Lambda \subseteq \mathbb{R}^n$ and a norm $\Vert \cdot \Vert: \mathbb{R}^n \to \mathbb{R}_{\geq 0}$, a \emph{facet} of the Voronoi cell is a subset $\mathcal{F} \subseteq \mathcal{V}(\Lambda, \Vert \cdot \Vert)$ such that
\begin{enumerate}
  \item $\exists v \in \Lambda \setminus \lbrace 0 \rbrace : \mathcal{F} = \mathcal{V}(\Lambda, \Vert \cdot \Vert) \cap \mathcal{H}_{\Vert \cdot \Vert}^{=} (0,v)$,
\item $\exists x \in \mathcal{F} \, \exists \delta \in \mathbb{R}_{>0}:
\mathcal{B}_{\Vert \cdot \Vert_2, \delta}(x) \cap \mathcal{H}_{\Vert \cdot \Vert}^{=}(0,v) \subseteq \mathcal{F}$,
\item there is no $w \in \Lambda \setminus \lbrace 0 \rbrace$ such that $\mathcal{F} \subsetneqq  \mathcal{V}(\Lambda, \Vert \cdot \Vert) \cap \mathcal{H}_{\Vert \cdot \Vert}^{=} (0,w)$.
\end{enumerate}
\end{definition}
We will see that for strictly convex and smooth norms (or only strictly convex norms in the two-dimensional case) the second condition implies the third condition. For non-strictly convex norms it can happen that $(n-1)$-dimensional bisector parts of a Voronoi cell are contained in each other (see Figure~\ref{fig:facets}).
For every norm we show as our first result on facets that the complexity of Voronoi cells is lower bounded by the number of Voronoi-relevant vectors, i.e., every Voronoi-relevant vector defines a facet of the Voronoi cell.
\begin{proposition}
\label{prop:Voronoi_relevant_has_facet}
Let $\Vert \cdot \Vert: \mathbb{R}^n \to \mathbb{R}_{\geq 0}$ be a norm and let $\Lambda \subseteq \mathbb{R}^n$ be a lattice. For every lattice vector $v \in \Lambda$ which is Voronoi-relevant with respect to $\Vert \cdot \Vert$ we have that $\mathcal{V}(\Lambda, \Vert \cdot \Vert) \cap \mathcal{H}_{\Vert \cdot \Vert}^{=}(0,v)$ is a facet of the Voronoi cell.
\end{proposition}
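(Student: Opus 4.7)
The plan is to verify the three conditions of Definition~\ref{defn:facet} for the set $\mathcal{F} := \mathcal{V}(\Lambda, \Vert\cdot\Vert) \cap \mathcal{H}^=_{\Vert\cdot\Vert}(0,v)$. Condition (i) holds tautologically with the given $v$. Next, let $x_0 \in \mathbb{R}^n$ be the witness point guaranteed by the Voronoi-relevance of $v$, i.e.\ $\Vert x_0 \Vert = \Vert x_0 - v \Vert$ and $\Vert x_0\Vert < \Vert x_0 - w\Vert$ for every $w \in \Lambda \setminus \{0,v\}$. In particular $\Vert x_0\Vert \le \Vert x_0 - w\Vert$ for all $w \in \Lambda$, so $x_0 \in \mathcal{V}(\Lambda,\Vert\cdot\Vert)$ and therefore $x_0 \in \mathcal{F}$. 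This point will serve as the distinguished point in both of the remaining conditions.

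For condition (ii), the aim is to produce $\delta > 0$ such that every point $y \in \mathcal{B}_{\Vert\cdot\Vert_2,\delta}(x_0) \cap \mathcal{H}^=_{\Vert\cdot\Vert}(0,v)$ satisfies $\Vert y\Vert \le \Vert y - w\Vert$ for all $w \in \Lambda$. Only the vectors $w \in \Lambda \setminus \{0,v\}$ need be considered, and the key step is to bound the gap
\[
\eta := \inf_{w \in \Lambda \setminus \{0,v\}} \bigl( \Vert x_0 - w\Vert - \Vert x_0\Vert \bigr)
\]
away from zero. For $w$ with $\Vert w\Vert \ge 3\Vert x_0\Vert$ the triangle inequality gives $\Vert x_0 - w\Vert - \Vert x_0\Vert \ge \Vert x_0\Vert$, while the set of lattice vectors of bounded norm is finite by discreteness of $\Lambda$, so the remaining infimum is attained at some $w$ and is strictly positive by the Voronoi-relevance of $v$; hence $\eta > 0$. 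The reverse triangle inequality then yields $\Vert y - w\Vert - \Vert y\Vert \ge \Vert x_0 - w\Vert - \Vert x_0\Vert - 2 \Vert y - x_0\Vert$ uniformly in $w$, so the required inequality holds whenever $\Vert y - x_0\Vert \le \eta/2$. Converting this $\Vert\cdot\Vert$-condition to a Euclidean one via equivalence of norms produces the desired $\delta$, and any such $y$ lies in $\mathcal{V}$ (being already on the bisector $\mathcal{H}^=(0,v)$), whence in $\mathcal{F}$.

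For condition (iii), suppose for contradiction that some $w \in \Lambda \setminus \{0\}$ satisfied $\mathcal{F} \subsetneqq \mathcal{V}(\Lambda,\Vert\cdot\Vert) \cap \mathcal{H}^=_{\Vert\cdot\Vert}(0,w)$. Since $x_0 \in \mathcal{F} \subseteq \mathcal{H}^=(0,w)$ we would have $\Vert x_0\Vert = \Vert x_0 - w\Vert$, and the strict inequalities guaranteed by Voronoi-relevance force $w = v$. But then $\mathcal{F} = \mathcal{V}(\Lambda,\Vert\cdot\Vert) \cap \mathcal{H}^=(0,w)$, contradicting the strict inclusion.

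The main obstacle is the uniform-positivity step for $\eta$ inside condition (ii): without the triangle-inequality-plus-discreteness argument, one cannot pass from the pointwise strict inequalities witnessing Voronoi-relevance to a single Euclidean ball on which they all persist simultaneously. Once this quantitative gap is in hand, conditions (ii) and (iii) follow from continuity of the norm and the Voronoi-relevance witness property, respectively.
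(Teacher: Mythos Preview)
Your proof is correct and follows essentially the same approach as the paper's: both use the Voronoi-relevance witness point $x_0$ to verify condition~(iii) directly, and for condition~(ii) both exploit discreteness of $\Lambda$ to obtain a uniform positive gap between $\Vert x_0\Vert$ and $\Vert x_0-w\Vert$ for $w\notin\{0,v\}$, then propagate this to a Euclidean neighborhood via continuity/triangle inequality. The only cosmetic differences are that the paper phrases the gap as ``$\mathcal{B}_{\Vert\cdot\Vert,\Vert x\Vert+\varepsilon}(x)\cap\Lambda=\{0,v\}$'' rather than computing your $\eta$ explicitly, and treats condition~(iii) before~(ii).
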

The next theorem shows that for every smooth and strictly convex norm there is a $1$-to-$1$ correspondence between Voronoi-relevant vectors and the facets of a Voronoi cell.
\begin{theorem}
\label{thm:bijection_facets_Voronoi-relevant}
For every lattice $\Lambda$ and every strictly convex and smooth norm $\Vert \cdot \Vert$, we have that $v \mapsto \mathcal{V}(\Lambda, \Vert \cdot \Vert) \cap \mathcal{H}_{\Vert \cdot \Vert}^{=}(0,v)$ is a bijection between Voronoi-relevant vectors and facets of the Voronoi cell.  For two-dimensional lattices, smoothness of the norm is not necessary.
\end{theorem}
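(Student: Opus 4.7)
The map is well-defined by Proposition~\ref{prop:Voronoi_relevant_has_facet}, so only injectivity and surjectivity require argument. Injectivity is essentially tautological from Definition~\ref{defn:Voronoi_relevant}: given distinct Voronoi-relevant vectors $v_1, v_2$, pick $x$ witnessing the relevance of $v_1$, so that $\Vert x \Vert = \Vert x-v_1 \Vert < \Vert x - w \Vert$ for every $w \in \Lambda \setminus \{0, v_1\}$; specializing to $w = v_2$ places $x$ inside $\mathcal{V}(\Lambda, \Vert \cdot \Vert) \cap \mathcal{H}^{=}_{\Vert \cdot \Vert}(0, v_1)$ but strictly off $\mathcal{H}^{=}_{\Vert \cdot \Vert}(0, v_2)$, so the two facets differ.

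For surjectivity, I would take a facet $\mathcal{F}$ with associated vector $v$ from condition~(1) of Definition~\ref{defn:facet} and suppose toward contradiction that $v$ is not Voronoi-relevant. Condition~(2) supplies $x \in \mathcal{F}$ and $\delta > 0$ with $U := \mathcal{B}_{\Vert \cdot \Vert_2, \delta}(x) \cap \mathcal{H}^{=}_{\Vert \cdot \Vert}(0, v) \subseteq \mathcal{F}$, and non-relevance of $v$ forces every $y \in U$ to satisfy $\Vert y \Vert = \Vert y - w \Vert$ for some $w \in \Lambda \setminus \{0, v\}$. Hence
\[
U \;=\; \bigcup_{w \in \Lambda \setminus \{0, v\}} U \cap \mathcal{H}^{=}_{\Vert \cdot \Vert}(0, w).
\]
Since $\Vert \cdot \Vert$ is strictly convex, Horv\'{a}rth's theorem implies that $\mathcal{H}^{=}_{\Vert \cdot \Vert}(0, v)$ is homeomorphic to $\mathbb{R}^{n-1}$, so $U$ is an open subset of a Polish manifold and hence a Baire space. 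Because $\Lambda$ is countable and each summand is closed in $U$, the Baire category theorem produces some $w \in \Lambda \setminus \{0, v\}$ and a nonempty relatively open $U' \subseteq U$ contained in $\mathcal{H}^{=}_{\Vert \cdot \Vert}(0, v) \cap \mathcal{H}^{=}_{\Vert \cdot \Vert}(0, w)$.

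The crux, and main obstacle, is turning this local coincidence of bisectors into the identity $v = w$. For $n \ge 3$ I would invoke smoothness: both bisectors are $C^1$ hypersurfaces near every $y \in U'$ (note that $y$, $y - v$ and $y - w$ are all nonzero there), so the inclusion forces their tangent hyperplanes to agree. These tangent hyperplanes are the kernels of $\nabla \Vert y \Vert - \nabla \Vert y - v \Vert$ and $\nabla \Vert y \Vert - \nabla \Vert y - w \Vert$, so the two gradient differences are parallel, which places $\nabla \Vert y - w \Vert$ on the affine line through $\nabla \Vert y \Vert$ and $\nabla \Vert y - v \Vert$. Strict convexity of $\Vert \cdot \Vert$ makes the dual unit sphere strictly convex, so any affine line meets it in at most two points; hence $\nabla \Vert y - w \Vert \in \{\nabla \Vert y \Vert,\, \nabla \Vert y - v \Vert\}$ at every $y \in U'$. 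Combined with $\Vert y - v \Vert = \Vert y - w \Vert = \Vert y \Vert$ on $U'$ and the fact that $\nabla \Vert \cdot \Vert$ is injective up to positive scaling for smooth strictly convex norms, the first alternative would confine $U'$ to a single line through the origin (impossible since $\dim U' = n - 1 \ge 2$), while the second yields $w = v$, the desired contradiction. For $n = 2$ smoothness is unavailable, but bisectors are merely topological arcs, and I would show directly from strict convexity that two distinct arcs $\mathcal{H}^{=}_{\Vert \cdot \Vert}(0, v)$ and $\mathcal{H}^{=}_{\Vert \cdot \Vert}(0, w)$ cannot share a $1$-dimensional open subarc, again forcing $w = v$.
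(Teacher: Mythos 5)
Your proof is correct in outline and, for $n\ge 3$, correct in substance, but it reaches the conclusion by a noticeably different route than the paper. The paper reduces everything to Proposition~\ref{prop:Voronoi_relevant_has_facet} plus the statement that for any facet the vector $v$ from condition~(1) is unique and Voronoi-relevant; it gets this by citing Proposition~\ref{prop:trisector_is_(n-2)-dimensional} (the intersection of two bisectors is an $(n-2)$-dimensional manifold), concluding that the countable union $\bigcup_{u\ne 0,v}\mathcal{H}^{=}_{\Vert\cdot\Vert}(0,v)\cap\mathcal{H}^{=}_{\Vert\cdot\Vert}(0,u)$ has measure zero in the local patch $\mathcal{B}_{\Vert\cdot\Vert_2,\delta}(x)\cap\mathcal{H}^{=}_{\Vert\cdot\Vert}(0,v)$, and picking a point $y$ off all other bisectors as the witness of relevance. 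You replace ``measure zero'' by a Baire-category argument and then, instead of invoking the trisector proposition, redo its essential computation pointwise: the gradients $\nabla h_{K^\circ}(y)$, $\nabla h_{K^\circ}(y-v)$, $\nabla h_{K^\circ}(y-w)$ lie on $\partial K^\circ$, an affine line meets the boundary of the strictly convex body $K^\circ$ in at most two points, and smoothness of $K^\circ$ (equivalently strict convexity of the norm) upgrades equality of gradients to equality of the underlying vectors. This is exactly the duality mechanism behind Proposition~\ref{prop:trisector_is_(n-2)-dimensional}; what your version buys is that it treats all $w\in\Lambda\setminus\{0,v\}$ uniformly, including those collinear with $0$ and $v$, which the paper's appeal to the ``non-collinear'' hypothesis of Proposition~\ref{prop:trisector_is_(n-2)-dimensional} glosses over. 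You do implicitly rely on the norm being $C^1$ away from the origin, which is Corollary~\ref{cor:norm_function_is_C1} applied to $K^\circ$; that should be said.

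The one genuine soft spot is the two-dimensional case. You assert that you ``would show directly from strict convexity'' that two distinct bisectors through $0$ cannot share a one-dimensional open subarc. This is true, but it is not a one-line consequence of strict convexity: it is precisely the content of Proposition~\ref{prop:trisector_2dimensions}(2) (Horv\'ath and the cited dissertation), namely that for strictly convex planar norms the intersection of two bisectors of three pairwise distinct points is empty or a single point. Without the differentiable structure your tangent-plane argument is unavailable, so for $n=2$ you should cite that result rather than promise a direct proof; as written, this step is a gap.
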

For the Euclidean norm, every lattice in $\mathbb{R}^n$ has at most $2(2^n-1)$ Voronoi-relevant vectors. 
By the above bijection, this is also a bound for the complexity of Voronoi cells under the Euclidean norm.
Unfortunately, for other norms and lattices in dimension three and higher, we cannot expect the complexity of Voronoi cells to depend only on the dimension and possibly the norm. In fact, already for $n=3$ and the $\ell_3$-norm we obtain a family of lattices with arbitrarily many Voronoi-relevant vectors. In particular, this implies that Micciancio's and Voulgaris' algorithm cannot be directly generalized to $\ell_p$-norms (for $p \in (1,\infty)$) without exceeding its single exponential running time.
\begin{theorem}
\label{thm:general_dimensions}
For every $k \in \mathbb{Z}_{>0}$, there is a lattice $\Lambda_k\subseteq \mathbb{R}^3$ with at least $k$ Voronoi-relevant vectors with respect to the 
(smooth and strictly convex) $\ell_3$-norm $\Vert \cdot \Vert_3$.
\end{theorem}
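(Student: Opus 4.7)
The plan is to exhibit, for each $k$, an explicit lattice $\Lambda_k \subseteq \mathbb{R}^3$ together with $k$ distinct lattice vectors $v_1, \dots, v_k \in \Lambda_k$, each of which is verified to be Voronoi-relevant by producing an explicit witness point satisfying Definition~\ref{defn:Voronoi_relevant}. The underlying intuition is that the unit ball of the $\ell_3$-norm, although strictly convex and smooth, has curvature properties that permit many lattice vectors to simultaneously play the ``second-closest point'' role with respect to different witness points. For the Euclidean norm this phenomenon is constrained by the classical bound $2(2^n-1)$, which ultimately rests on projection properties of Euclidean balls that the $\ell_3$-ball does not share.

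A natural candidate is a lattice $\Lambda_k$ with one basis vector very short and essentially transverse to the plane spanned by the other two. Writing $\Lambda_k = \mathcal{L}(b_1, b_2, b_3^{(k)})$ where $b_3^{(k)}$ has a transverse coordinate shrinking to $0$ as $k \to \infty$, I would take as candidates the vectors $v_j = b_3^{(k)} + j\, b_1$ for $j = 0, 1, \dots, k-1$. These candidates all lie in a thin slab close to $\spann(b_1)$ and cluster along a common line. Nevertheless, the strict convexity of $\Vert \cdot \Vert_3$ keeps the bisectors $\mathcal{H}_{\Vert \cdot \Vert_3}^{=}(0, v_j)$ geometrically distinct, and the points $v_j/2$ (after a small perturbation) can serve as the witnesses $x_j$.

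For each $j$, the witness $x_j$ would be chosen on $\mathcal{H}_{\Vert \cdot \Vert_3}^{=}(0,v_j)$ close to $v_j/2$, with a small perturbation transverse to $\spann(b_1)$ chosen so that $x_j$ lies strictly inside the corresponding facet. The main technical obstacle is verifying $\Vert x_j \Vert_3 < \Vert x_j - w \Vert_3$ for every $w \in \Lambda_k \setminus \lbrace 0, v_j \rbrace$. I would split this verification into three cases. First, lattice points $w$ whose transverse coordinate is far from that of $v_j$ are handled by a direct lower bound on the $\ell_3$-norm coming from their transverse contribution. Second, lattice points in $\spann(b_1, b_2)$ that are far from the projection of $v_j$ are handled by a local convexity estimate around $v_j/2$. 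Third, and most delicate, the competing candidates $v_{j'}$ with $j' \neq j$ are ruled out by a quantitative convexity argument: expanding $\Vert x_j - v_{j'} \Vert_3^3 - \Vert x_j \Vert_3^3$ to second order in the small transverse parameter governing the thickness of the slab, the leading nonvanishing term is strictly positive thanks to the nontrivial curvature of the $\ell_3$-ball at points away from the coordinate axes. This third case is the heart of the proof and is where the specific behaviour of $\ell_3$ (as opposed to $\ell_2$) is essential. Once the $k$ witnesses are in place, the vectors $v_1, \dots, v_k$ are distinct elements of $\Lambda_k$ by construction, establishing the claim.
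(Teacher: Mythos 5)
Your construction cannot work as stated, and the obstruction is quantitative rather than a matter of missing detail in the ``heart of the proof.'' Every Voronoi-relevant (indeed every weak Voronoi-relevant) vector $v$ of a lattice $\Lambda$ satisfies $\Vert v \Vert \leq 2\mu(\Lambda, \Vert \cdot \Vert)$: a witness $x$ has $0$ as a closest lattice point, so $\Vert x \Vert \leq \mu$ and $\Vert v \Vert \leq \Vert x \Vert + \Vert x - v \Vert = 2 \Vert x \Vert$. In your lattice $\Lambda_k = \mathcal{L}(b_1, b_2, b_3^{(k)})$ the vectors $b_1, b_2$ are fixed and $b_3^{(k)}$ is short (or at least the lattice slabs it generates are transversally dense), so $\mu(\Lambda_k, \Vert \cdot \Vert_3) \leq \frac{1}{2}\left( \Vert b_1 \Vert_3 + \Vert b_2 \Vert_3 + \Vert b_3^{(k)} \Vert_3 \right)$ is bounded by a constant $C$ independent of $k$, whereas $\Vert v_j \Vert_3 = \Vert b_3^{(k)} + j b_1 \Vert_3$ grows linearly in $j$. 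Hence all but $\bigO(1)$ of your candidates exceed $2C$ and admit no witness anywhere, not merely none near $v_j/2$. (Concretely, at $x_j = v_j/2$ one already has $\Vert x_j - b_1 \Vert_3 \leq \Vert x_j \Vert_3 - \Vert b_1 \Vert_3 + \Vert b_3^{(k)} \Vert_3 < \Vert x_j \Vert_3$ for $j \geq 2$, a gap that small perturbations cannot close.) Thinning the slab does shrink $\lambda_1$, which is why Proposition~\ref{prop:trivial_upper_bound} would permit many relevant vectors, but those would have to be the many short vectors near the origin at different transverse heights, not a long collinear family.

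The curvature heuristic in your third case also points the wrong way. At a generic boundary point the $\ell_3$-ball has strictly positive principal curvatures, so a second-order expansion there behaves exactly like the Euclidean case and reproduces, rather than evades, the Euclidean bound; ``nontrivial curvature away from the coordinate axes'' is precisely what you must avoid. The paper exploits the opposite phenomenon: along an edge of the rounded cube, where one coordinate vanishes, one principal curvature degenerates and the boundary deviates from its tangent direction only to third order, so a long segment can lie on the boundary of a large ball that contains no further lattice points. Accordingly the paper takes the third basis vector very long ($M = 5\sqrt{2}m^5$), which makes the covering radius large enough to admit long Voronoi-relevant vectors, rotates the plane of $b_{m,1}, b_{m,2}$ so that the segments from $0$ to $b_{m,1} + k b_{m,2}$ run along such an edge, and places the witness $x_{m,k}$ far from the midpoint, at the center of that large ball. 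You would need to redesign your lattice along these lines --- a long rather than short third basis vector, and candidates aligned with a flat direction of the unit ball --- before the three-case verification you outline could be carried out.
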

\begin{corollary}
For lattices $\Lambda \subseteq \mathbb{R}^n, n\ge 3$ and a strictly convex and smooth norm $\Vert\cdot\Vert$, the number of Voronoi-relevant vectors of $\Lambda$ with respect to $\Vert\cdot\Vert$ cannot be bounded by a function only depending on $n$ and $\Vert\cdot\Vert$.
\end{corollary}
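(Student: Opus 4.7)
The plan is to derive the corollary directly from Theorem~\ref{thm:general_dimensions} for $n=3$ and to extend to dimension $n>3$ by an orthogonal padding argument with the $\ell_3$-norm on $\mathbb{R}^n$. For $n=3$ the conclusion is immediate: the family $\{\Lambda_k\}_{k \in \mathbb{Z}_{>0}}$ provided by Theorem~\ref{thm:general_dimensions}, together with the strictly convex and smooth $\ell_3$-norm, already yields a sequence of three-dimensional lattices whose number of Voronoi-relevant vectors grows without bound, ruling out any bound depending only on $n=3$ and $\Vert\cdot\Vert_3$.

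For $n>3$, my plan is to consider the direct sum $\Lambda_k^{(n)} := \Lambda_k \oplus D\cdot\mathbb{Z}^{n-3} \subseteq \mathbb{R}^n$, equipped with the $\ell_3$-norm on $\mathbb{R}^n$, where the scalar $D>0$ is chosen sufficiently large (depending on $k$). I would then show that each Voronoi-relevant vector $v \in \Lambda_k$, with witness $x \in \mathbb{R}^3$ satisfying $\Vert x \Vert_3 = \Vert x - v \Vert_3 < \Vert x - w \Vert_3$ for all $w \in \Lambda_k \setminus \{0,v\}$, lifts to a Voronoi-relevant vector $(v,0) \in \Lambda_k^{(n)}$ with witness $(x,0) \in \mathbb{R}^n$.

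The key verification is the identity
$$\Vert (x,0) - (w, D\mathbf{k}) \Vert_3^{\,3} = \Vert x - w \Vert_3^{\,3} + D^3 \Vert \mathbf{k} \Vert_3^{\,3}$$
for every $(w,D\mathbf{k}) \in \Lambda_k^{(n)}$. The case $\mathbf{k}=0$ reduces to the strict inequality furnished by the Voronoi-relevance of $v$ in $\Lambda_k$, while the case $\mathbf{k}\neq 0$ is handled by choosing $D^3 > \Vert x \Vert_3^{\,3}$, using $\Vert \mathbf{k} \Vert_3 \geq 1$. Since $\Lambda_k$ has only finitely many Voronoi-relevant vectors, I can pick a single $D$ (larger than the maximum of the witness norms) that works for all of them simultaneously. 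This produces, for every $k$, a lattice $\Lambda_k^{(n)}\subseteq\mathbb{R}^n$ with at least $k$ Voronoi-relevant vectors with respect to the fixed strictly convex and smooth norm $\Vert\cdot\Vert_3$, contradicting any bound of the form $f(n,\Vert\cdot\Vert_3)$.

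The argument is almost pure bookkeeping on top of Theorem~\ref{thm:general_dimensions}, and I do not anticipate a substantial obstacle. The only delicate point is ensuring that the lifted witness $(x,0)$ yields a \emph{strict} inequality against all lattice points of $\Lambda_k^{(n)}$ outside $\{0,(v,0)\}$; this is precisely what the choice of $D$ provides, exploiting that the cube of the $\ell_3$-distance in $\mathbb{R}^n$ splits additively over orthogonal coordinate blocks.
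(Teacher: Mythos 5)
Your proposal is correct and follows the route the paper intends: the corollary is stated as an immediate consequence of Theorem~\ref{thm:general_dimensions}, which handles $n=3$ directly. Your orthogonal padding via $\Lambda_k \oplus D\cdot\mathbb{Z}^{n-3}$ with $D$ exceeding the witness norms, exploiting the additive splitting of $\Vert\cdot\Vert_3^3$ over coordinate blocks, correctly makes explicit the $n>3$ case that the paper leaves implicit.
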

In $\mathbb{R}^2$, Voronoi cells with respect to strictly convex norms behave as Voronoi cells with respect to the Euclidean norm $\Vert\cdot\Vert_2$. This we have seen geometrically in Theorems~\ref{thm:Voronoi-relevant_vectors_define_Voronoi-cell} and~\ref{thm:bijection_facets_Voronoi-relevant}. In the next proposition we show that for strictly convex norms Voronoi cells  of lattices in $\mathbb{R}^2$ have at most six facets. Note that this coincides with the upper bound $2(2^n-1)$ for the complexity of Voronoi-cells of lattices in $\mathbb{R}^2$ with respect to the  Euclidean norm.
\begin{proposition}
\label{prop:2dimensions_strictly_convex_exact} 
For every lattice $\Lambda \subseteq \mathbb{R}^2$ and every strictly convex norm  $\Vert \cdot \Vert$, we have that $\Lambda$ has either $4$ or $6$ Voronoi-relevant vectors with respect to $\Vert \cdot \Vert$.
\end{proposition}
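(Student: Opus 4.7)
The plan is to use Theorem~\ref{thm:bijection_facets_Voronoi-relevant} (whose two-dimensional version requires no smoothness assumption) to replace ``number of Voronoi-relevant vectors'' by ``number of facets'' of the Voronoi cell $\mathcal{V}:=\mathcal{V}(\Lambda,\Vert\cdot\Vert)$. By central symmetry of $\mathcal{V}$ this number is even, say $2k$, and I would list the Voronoi-relevant vectors cyclically as $v_1,\ldots,v_{2k}$ with $v_{i+k}=-v_i$, separated by vertices $p_1,\ldots,p_{2k}$ satisfying $p_{i+k}=-p_i$. To prove $k\geq 2$, observe that if $k=1$ then Theorem~\ref{thm:Voronoi-relevant_vectors_define_Voronoi-cell} gives $\mathcal{V}=\lbrace x\mid \Vert x\Vert\leq \Vert x-v_1\Vert\rbrace \cap \lbrace x\mid \Vert x\Vert\leq \Vert x+v_1\Vert\rbrace$. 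Each of the two bounding bisectors $\mathcal{H}_{\Vert\cdot\Vert}^{=}(0,\pm v_1)$ is homeomorphic to a line in $\mathbb{R}^2$ by Horv\'ath's theorem, so their common region is an unbounded ``slab'' transverse to $v_1$, contradicting the boundedness of $\mathcal{V}$ for a full-rank lattice.

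For the upper bound $k\leq 3$ the key step is a translation argument. Because $p_i$ lies on the common boundary of $\mathcal{V}$ and $\mathcal{V}+v_{i+1}$, the shifted point $p_i-v_{i+1}$ is again a vertex of $\mathcal{V}$, equidistant to the three lattice points $0$, $-v_{i+1}$, and $v_i-v_{i+1}$. Its two incident facets of $\mathcal{V}$ thus correspond to $-v_{i+1}=v_{k+i+1}$ and to one of its cyclic neighbours, which is $v_{k+i}=-v_i$ or $v_{k+i+2}=-v_{i+2}$. In the generic case where only those three lattice points are equidistant from $p_i-v_{i+1}$, the extra equidistant lattice vector $v_i-v_{i+1}$ must itself be that neighbouring Voronoi-relevant vector. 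The option $v_i-v_{i+1}=-v_i$ would force $v_{i+1}=2v_i\in 2\Lambda$, but strict convexity of $\Vert\cdot\Vert$ makes no element of $2\Lambda$ Voronoi-relevant: for any candidate witness $x$ with $\Vert x\Vert=\Vert x-2v_i\Vert$, strict convexity applied at the midpoint $x-v_i$ yields $\Vert x-v_i\Vert<\Vert x\Vert$, contradicting the defining inequality of Voronoi-relevance. The remaining option gives the Fibonacci-type recurrence $v_{i+2}=v_{i+1}-v_i$, which iterated yields $v_4=-v_1$; comparing with central symmetry $v_{k+1}=-v_1$ forces $k=3$.

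The main obstacle is the degenerate case where some vertex $p_i-v_{i+1}$ is equidistant to four or more lattice points and the recurrence can fail at that vertex. In that situation I expect the extra equidistance to impose a global symmetry on $\mathcal{V}$ mirroring the Euclidean rectangular picture --- in which the four corners of a rectangle are each equidistant to four lattice points while only $\pm b_1,\pm b_2$ are Voronoi-relevant --- and therefore to yield $k=2$. The technical heart is a consistency analysis applied simultaneously to the two shifts $-v_i$ and $-v_{i+1}$ of every vertex $p_i$: one must show that any partial degeneracy propagating around the cycle must either collapse the cell to the rectangular configuration ($k=2$) or reduce to the generic Fibonacci recurrence ($k=3$), leaving no room for any intermediate value $k\geq 4$.
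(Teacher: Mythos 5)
Your reduction to counting facets via Theorem~\ref{thm:bijection_facets_Voronoi-relevant}, and the use of central symmetry to get evenness, match the paper. But your route to the bound itself is different from the paper's: the paper verifies that the translates $\lbrace \mathcal{V}+v \mid v \in \Lambda\rbrace$ form a \emph{normal} tiling of the plane (each tile a topological disc, uniformly bounded in and out, pairwise intersections connected --- the last point via Proposition~\ref{prop:trisector_2dimensions} and the intermediate value theorem) and then invokes Gr\"unbaum--Shephard's Theorem 3.2.6, which caps the number of adjacent tiles at six; evenness then gives $4$ or $6$. You instead attempt a self-contained combinatorial argument. Your generic-case reasoning is sound: the translate $p_i-v_{i+1}$ of a vertex is again a vertex equidistant to $0$, $-v_{i+1}$, $v_i-v_{i+1}$; your exclusion of $v_{i+1}=2v_i$ by strict convexity at the midpoint is correct; and the recurrence $v_{i+2}=v_{i+1}-v_i$, if valid at every vertex, does force period six. (A smaller point you gloss over: the cyclic arrangement of facets separated by vertices, with exactly two facets per vertex, presupposes that each facet is a connected arc --- this is exactly the connectedness statement the paper proves as normality condition three, so it is fillable but not free.)

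The genuine gap is the degenerate case, which you explicitly leave as ``I expect.'' If $p_i-v_{i+1}$ lies on three or more bisectors $\mathcal{H}^{=}_{\Vert\cdot\Vert}(0,w)$, then the second facet through that vertex need not be $\mathcal{V}\cap\mathcal{H}^{=}_{\Vert\cdot\Vert}(0,v_i-v_{i+1})$ --- the bisector of $v_i-v_{i+1}$ may meet $\mathcal{V}$ only in that single point, as happens for $e_1+e_2$ at the corner of the square-lattice cell --- and then the recurrence step at index $i$ is simply unavailable. A single such failure destroys the global period-six count, so without this case you have not excluded $k=4$ or $k=5$. Your proposed dichotomy (any degeneracy collapses the cell to the rectangular picture with $k=2$) is asserted rather than proved, and it is not clear that degeneracy at one vertex propagates to all vertices; the ``consistency analysis'' you name as the technical heart is precisely what is missing. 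This is the work that the paper outsources to the normal-tiling theorem, so as it stands the proposal is an incomplete proof of the upper bound.
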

  Under non-strictly convex norms on $\mathbb{R}^2$, the Voronoi-relevant vectors generally do not define the Voronoi cell. Instead we can use weak Voronoi-relevant vectors to describe Voronoi cells (Theorem~\ref{thm:generalized_Voronoi-relevant_vectors_define_Voronoi-cell}), but the number of these vectors is generally not bounded by a constant.
\begin{proposition}
\label{prop:2dimensions_1norm}
For every $k \in \mathbb{Z}_{>0}$, there is a lattice $\Lambda_k \subseteq \mathbb{R}^2$ with at least $k$ weak Voronoi-relevant vectors with respect to $\Vert \cdot \Vert_1$.
\end{proposition}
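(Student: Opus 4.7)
The plan is to construct an explicit family of lattices exploiting the fact that $\ell_1$-bisectors have two-dimensional pieces along diagonal directions. Specifically, for $v = (a, a)$ with $a > 0$, a sign-case analysis of $|x_1| + |x_2| = |x_1 - a| + |x_2 - a|$ shows that the entire quadrant $\{x \in \mathbb{R}^2 \mid x_1 \geq a, \ x_2 \leq 0\}$ is contained in the bisector $\mathcal{H}^=_{\Vert \cdot \Vert_1}(0, v)$. I plan to exploit this degeneracy by stacking many lattice points along the diagonal $x_2 = x_1$, so that a single witness point can simultaneously realise the weak Voronoi-relevance of all of them.

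Concretely, for given $k \in \mathbb{Z}_{>0}$, I set $M := k$ and define
\[
\Lambda_k := \mathbb{Z} \cdot (1, 1) + \mathbb{Z} \cdot (M, -M).
\]
For each $j \in \{1, 2, \ldots, k\}$ I set $v_j := j \cdot (1,1) = (j, j) \in \Lambda_k$ and choose the candidate witness point $x_j := (j, 0)$. The equidistance $\Vert x_j \Vert_1 = j = \Vert x_j - v_j \Vert_1$ is immediate from the definition of the $\ell_1$-norm.

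The substance of the proof is then to verify that $x_j$ lies in the Voronoi cell $\mathcal{V}(\Lambda_k, \Vert \cdot \Vert_1)$, i.e., that $\Vert x_j - w \Vert_1 \geq j$ for every $w \in \Lambda_k$. Writing $w = (a + bM, a - bM)$ with $a, b \in \mathbb{Z}$: for $b = 0$, the triangle inequality yields $\Vert x_j - w \Vert_1 = |j - a| + |a| \geq |(j-a) + a| = j$; for $b \neq 0$, a second application of the triangle inequality gives
\[
\Vert x_j - w \Vert_1 = |j - a - bM| + |a - bM| \geq |j - 2bM| \geq 2|b|M - j \geq 2M - j \geq j,
\]
using $M = k \geq j$. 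Combining, $\Vert x_j \Vert_1 = \Vert x_j - v_j \Vert_1 \leq \Vert x_j - w \Vert_1$ for every $w \in \Lambda_k$, so each $v_j$ is weak Voronoi-relevant by Definition~\ref{defn:Voronoi_relevant}. Since the vectors $v_1, \ldots, v_k$ are pairwise distinct, $\Lambda_k$ has at least $k$ weak Voronoi-relevant vectors, as required.

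I expect the only real obstacle to be the preliminary geometric observation about $2$-dimensional bisector pieces; once that is identified, every subsequent step is an elementary triangle-inequality estimate with no asymptotics to control. A variant taking $M$ strictly larger than $k$ gives strictly positive slack in all estimates and might be preferable for the exposition.
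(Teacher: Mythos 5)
Your proof is correct and takes essentially the same approach as the paper: your lattice $\mathbb{Z}\cdot(1,1)+\mathbb{Z}\cdot(M,-M)$ equals the paper's $\mathcal{L}\bigl((1,1)^T,(0,m)^T\bigr)$ with $m=2M$, and both arguments rest on the degeneracy of $\ell_1$-bisectors along the diagonal. The only (cosmetic) difference is that you certify each $v_j=(j,j)$ with its own witness $x_j=(j,0)$ via explicit triangle-inequality estimates, whereas the paper uses the single deep hole $x=\frac12(0,m)^T$ to certify all of them at once.
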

At least we can give an upper bound for the number of weak Voronoi-relevant vectors with respect to arbitrary norms using more refined lattice parameters than simply the lattice dimension.
More precisely, in addition to the dimension, the upper bound depends on the ratio of the covering radius
$\mu(\Lambda, \Vert \cdot \Vert) :=
\inf \lbrace d \in \mathbb{R}_{\geq 0} \mid \forall x \in \mathbb{R}^n \, \exists v \in \Lambda: \Vert x-v \Vert \leq d \rbrace$
 and the length $\lambda_1(\Lambda, \Vert \cdot \Vert)$ of a shortest non-zero lattice vector. 
The value $\lambda_1(\Lambda, \Vert \cdot \Vert)$ is also known as the first successive minimum of $\Lambda$.
\begin{proposition}
\label{prop:trivial_upper_bound}
For every lattice $\Lambda \subseteq \mathbb{R}^n$ and every norm $\Vert \cdot \Vert$, the lattice~$\Lambda$ has at most $\left( 1+4 \frac{\mu(\Lambda, \Vert \cdot \Vert)}{\lambda_1(\Lambda, \Vert \cdot \Vert)} \right)^n$ weak Voronoi-relevant vectors with respect to $\Vert \cdot \Vert$.
\end{proposition}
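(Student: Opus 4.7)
The plan is to combine two observations: first, every weak Voronoi-relevant vector has $\Vert\cdot\Vert$-length at most $2\mu(\Lambda, \Vert\cdot\Vert)$, and second, a standard packing argument then bounds the number of lattice vectors of that length.

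For the length bound, let $v \in \Lambda \setminus \lbrace 0 \rbrace$ be weak Voronoi-relevant and let $x \in \mathbb{R}^n$ be the witness from Definition~\ref{defn:Voronoi_relevant}. The condition $\Vert x \Vert \leq \Vert x - w \Vert$ for all $w \in \Lambda$ is exactly the statement that $x$ lies in $\mathcal{V}(\Lambda, \Vert \cdot \Vert)$, i.e., that $0$ is a closest lattice point to $x$. By the defining property of the covering radius this forces $\Vert x \Vert \leq \mu(\Lambda, \Vert \cdot \Vert)$, and together with the equality $\Vert x \Vert = \Vert x - v \Vert$ and the triangle inequality we obtain $\Vert v \Vert \leq \Vert x \Vert + \Vert x - v \Vert \leq 2\mu(\Lambda, \Vert \cdot \Vert)$.

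For the counting step, set $\rho := \lambda_1(\Lambda, \Vert \cdot \Vert)/2$ and consider the open balls $\mathcal{B}_{\Vert \cdot \Vert, \rho}(w)$ for $w \in \Lambda$. These balls are pairwise disjoint, since any two distinct lattice points are at $\Vert \cdot \Vert$-distance at least $\lambda_1(\Lambda, \Vert \cdot \Vert) = 2\rho$. Moreover, the length bound from the previous paragraph implies that for every weak Voronoi-relevant $v$ we have $\mathcal{B}_{\Vert \cdot \Vert, \rho}(v) \subseteq \mathcal{B}_{\Vert \cdot \Vert, 2\mu + \rho}(0)$. Since the unit ball of $\Vert \cdot \Vert$ is a symmetric convex body, $\vol \left( \mathcal{B}_{\Vert \cdot \Vert, r}(0) \right)$ scales as $r^n$, and comparing Lebesgue volumes yields
\[
\#\lbrace v \in \Lambda \setminus \lbrace 0 \rbrace \mid v \text{ weak Voronoi-relevant} \rbrace \;\leq\; \frac{\vol \left( \mathcal{B}_{\Vert \cdot \Vert, 2\mu + \rho}(0) \right)}{\vol \left( \mathcal{B}_{\Vert \cdot \Vert, \rho}(0) \right)} \;=\; \left( \frac{2\mu + \rho}{\rho} \right)^n \;=\; \left( 1 + 4\frac{\mu(\Lambda, \Vert \cdot \Vert)}{\lambda_1(\Lambda, \Vert \cdot \Vert)} \right)^n.
\]

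The argument is essentially routine, so I do not expect a genuine obstacle. The only point that needs to be stated carefully is that the witness $x$ in the definition of a weak Voronoi-relevant vector automatically lies in the Voronoi cell, since the condition $\Vert x \Vert \leq \Vert x - w \Vert$ is imposed for every $w \in \Lambda$ and not only for $w = v$; this is what makes the covering radius applicable and drives the whole bound.
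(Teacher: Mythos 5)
Your proposal is correct and follows essentially the same packing argument as the paper: the bound $\Vert v \Vert \leq 2\mu$ for weak Voronoi-relevant $v$, disjoint balls of radius $\lambda_1/2$ around them packed into a ball of radius $2\mu + \lambda_1/2$, and a volume comparison. The only difference is that you spell out why the witness $x$ lies in the Voronoi cell and hence satisfies $\Vert x \Vert \leq \mu$, a detail the paper leaves implicit.
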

Although this result seems to be folklore, below we provide a proof for it. An important open question is if one actually can construct a family of lattices $\Lambda_n \subseteq \mathbb{R}^n$ whose number of (weak) Voronoi-relevant vectors grows as $\Theta \left( \left(\frac{\mu(\Lambda, \Vert \cdot \Vert)}{\lambda_1(\Lambda, \Vert \cdot \Vert)} \right)^n \right)$.

Another open problem is if the smoothness assumption in Theorems~\ref{thm:Voronoi-relevant_vectors_define_Voronoi-cell} and~\ref{thm:bijection_facets_Voronoi-relevant} can be omitted.
We need this assumption in higher dimensions due to our proof techniques, which use manifolds and norms that are continuously differentiable as functions.
With this and the Regular Level Set Theorem (see Proposition~\ref{prop:regular_level_set_theorem}), we prove in Proposition~\ref{prop:trisector_is_(n-2)-dimensional} that the intersection of bisectors of three non-collinear points in $\mathbb{R}^n$ is an $(n-2)$-dimensional manifold.
Non-smooth norms are not continuously differentiable on the whole $\mathbb{R}^n \setminus \lbrace 0 \rbrace$, and thus we cannot apply the Regular Level Set Theorem to derive Proposition~\ref{prop:trisector_is_(n-2)-dimensional}.

\paragraph{Organization}
Section~\ref{sec:comb} is devoted to the proof of our main result Theorem~\ref{thm:general_dimensions}.
Moreover, we prove Proposition~\ref{prop:trivial_upper_bound}.
The second part of this paper in Section~\ref{sec:geom} considers relations between the (weak) Voronoi-relevant vectors of a lattice and its Voronoi cell.
In Subsection~\ref{ssec:genNorms}, we discuss results which hold for all norms, as Theorem~\ref{thm:generalized_Voronoi-relevant_vectors_define_Voronoi-cell} and Proposition~\ref{prop:Voronoi_relevant_has_facet}.
We study strictly convex norms in Subsection~\ref{ssec:strNorms} and show Theorems~\ref{thm:Voronoi-relevant_vectors_define_Voronoi-cell} and~\ref{thm:bijection_facets_Voronoi-relevant}.
Finally, in Subsection~\ref{ssec:2Dim}, we focus on two-dimensional lattices and show Propositions~\ref{prop:2dimensions_strictly_convex_exact} and~\ref{prop:2dimensions_1norm}.

\section{Lower Bound on the Complexity of Voronoi cells}
\label{sec:comb}
A first approach to the number of (weak) Voronoi-relevant vectors is stated in Proposition~\ref{prop:trivial_upper_bound}, which in particular implies that there are always finitely many weak Voronoi-relevant vectors.
\begin{proofoftrivialbound}
The proof uses an easy packing argument. By the definition of weak Voronoi-relevant vectors it holds for every such vector $v$ that $\Vert v \Vert \leq 2 \mu (\Lambda, \Vert \cdot \Vert)$. Thus, we have 
\begin{align*}
  \bigcup \limits_{\substack{v \in \Lambda \text{ weak} \\ \text{Voronoi-relevant}}} \mathcal{B}_{\Vert \cdot \Vert, \frac{\lambda_1(\Lambda, \Vert \cdot \Vert)}{2}}(v) \subseteq 
  \mathcal{B}_{\Vert \cdot \Vert, 2 \mu (\Lambda, \Vert \cdot \Vert) + \frac{\lambda_1(\Lambda, \Vert \cdot \Vert)}{2}}(0),
\end{align*}
where the left union is disjoint by definition of $\lambda_1$. This shows that the number of weak Voronoi-relevant vectors is upper bounded by
\begin{align*}
  \frac{\vol \left(\mathcal{B}_{\Vert \cdot \Vert, 2 \mu (\Lambda, \Vert \cdot \Vert) + \frac{\lambda_1(\Lambda, \Vert \cdot \Vert)}{2}}(0) \right)}
  {\vol \left(\mathcal{B}_{\Vert \cdot \Vert, \frac{\lambda_1(\Lambda, \Vert \cdot \Vert)}{2}}(0) \right)}
  = \left( 1+4 \frac{\mu(\Lambda, \Vert \cdot \Vert)}{\lambda_1(\Lambda, \Vert \cdot \Vert)} \right)^n.
  \hfill \qed
\end{align*}
\end{proofoftrivialbound}

Now we prove Theorem~\ref{thm:general_dimensions} by constructing a family of three-dimensional lattices such that their number of Voronoi-relevant vectors with respect to the $\ell_3$-norm $\Vert \cdot \Vert_3$ is not bounded from above by a constant.
The idea is to use a lattice of the form $\mathcal{L}(e_1, e_2, M e_3)$, where $(e_1, e_2, e_3)$ denotes the standard basis of $\mathbb{R}^3$ and $M \in \mathbb{Z}_{>0}$ is chosen sufficiently large, and to apply some rotations to this lattice.
These rotations will depend on a parameter $m \in \mathbb{Z}_{>0}$ such that every lattice in the family is rotated differently.
The basis vectors of the rotated lattices will be denoted by $b_{m,1}$, $b_{m,2}$ and $b_{m,3}$ and will coincide with the rotated versions of $e_1$, $e_2$ and $Me_3$, respectively. The intuition is to rotate $\mathcal{L}(e_1, e_2, M e_3)$ such that the line segment between $0$ and $b_{m,1} + m b_{m,2}$ lies in an \emph{edge} of a scaled and translated unit ball of the $\ell_3$-norm when intersecting the plane spanned by $0$, $b_{m,1}$ and $b_{m,2}$ with the ball.

Figure~\ref{fig:3norm} shows the closed unit ball of the $\ell_3$-norm with and without intersections with different planes. The unit ball can be intuitively seen as a cube with rounded edges and corners. Throughout the following description, we will make often use of this intuitive notion of an edge of the unit ball.
Let the $x$-, $y$- and $z$-axis denote the axes of the standard three-dimensional coordinate system which are spanned by $e_1$, $e_2$ and $e_3$, respectively.
As seen in Figures~\ref{sfig:norm_plane_ortho} and~\ref{sfig:norm_plane_ortho_intersect}, the intersection of the ball with a plane which is orthogonal to the $z$-axis (e.g., the plane spanned by $0$, $e_1$ and $e_2$) yields a scaled unit ball of the $\ell_3$-norm in two dimensions.
But when such a plane is rotated around the $y$-axis by $45^\circ$, as in Figures~\ref{sfig:norm_plane_45} to~\ref{sfig:norm_plane_45extreme_intersect}, it intersects the three-dimensional unit ball of the $\ell_3$-norm at one of its edges.  These kinds of intersections are roughly speaking as less circular as possible, and the closer the plane is to the edge, the less circular the intersection is. 
With \emph{less circular} we mean that the ratio between the diameter of the smallest circle in the plane containing the intersection and the diameter of the largest circle contained in the intersection is large.
Due to this intuition, the plane spanned by $0$, $b_{m,1}$ and $b_{m,2}$ should be of the form of the plane in Figures~\ref{sfig:norm_plane_45extreme} and~\ref{sfig:norm_plane_45extreme_intersect}. Moreover, the line segment between $0$ and $b_{m,1} + m b_{m,2}$ should lie directly on the edge of a scaled and translated unit ball such that all other lattice points in the plane spanned by $0$, $b_{m,1}$ and $b_{m,2}$ lie outside of the ball. This is illustrated in Figure~\ref{fig:plane_through_ball} for the case $m=3$. If $M$ is now chosen large enough, every lattice point of the form $z_1 b_{m,1} + z_2 b_{m,2} + z_3 b_{m,3}$ with $z_1,z_2,z_3 \in \mathbb{Z}, z_3 \neq 0$ will be sufficiently far away from the plane spanned by $0$, $b_{m,1}$ and $b_{m,2}$ such that it will also lie outside of the ball.
Then $0$ and $b_{m,1} + m b_{m,2}$ are the only lattice points in the ball, and if they in fact lie on the boundary of the ball, it follows that $b_{m,1} + m b_{m,2}$ is a Voronoi-relevant vector, where the center of the ball serves as $x$ in Definition~\ref{defn:Voronoi_relevant} of Voronoi-relevant vectors.

\begin{figure}[tbhp]
\centering
\subfloat[$\mathcal{B}_{\Vert \cdot \Vert_3,1}(0)$.]
{\label{sfig:norm_front}
\includegraphics[width=0.23\textwidth]{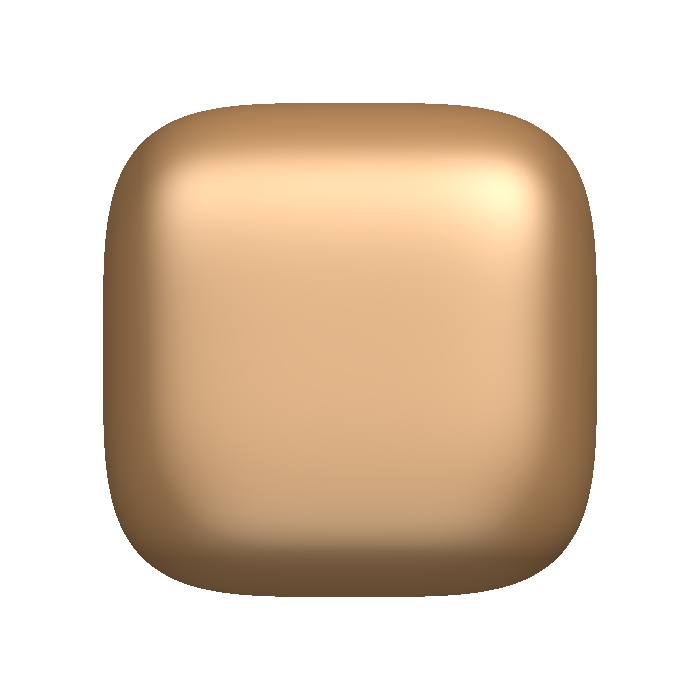}}
~
\subfloat[$\mathcal{B}_{\Vert \cdot \Vert_3,1}(0)$.]
{\label{sfig:norm_corner}
\includegraphics[width=0.23\textwidth]{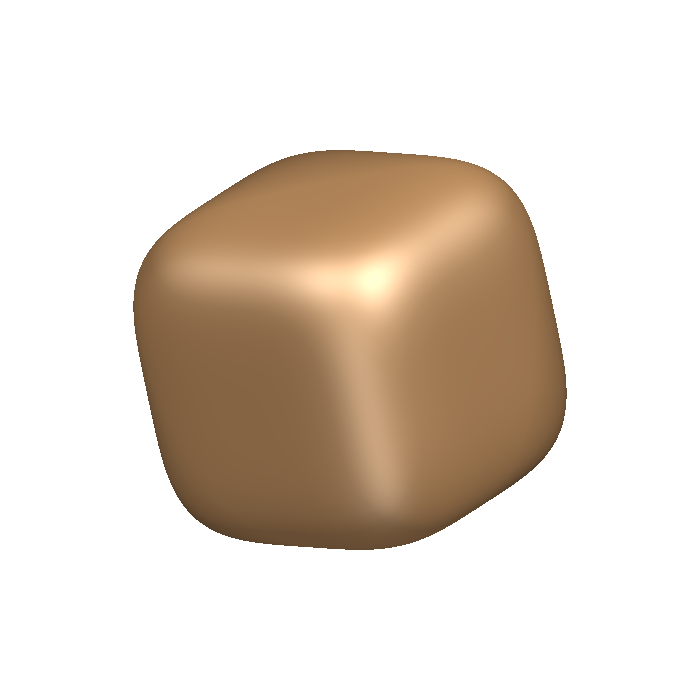}}
~
\subfloat[$\mathcal{B}_{\Vert \cdot \Vert_3,1}(0)$ and plane orthogonal to $z$-axis.]
{\label{sfig:norm_plane_ortho}
\includegraphics[width=0.23\textwidth]{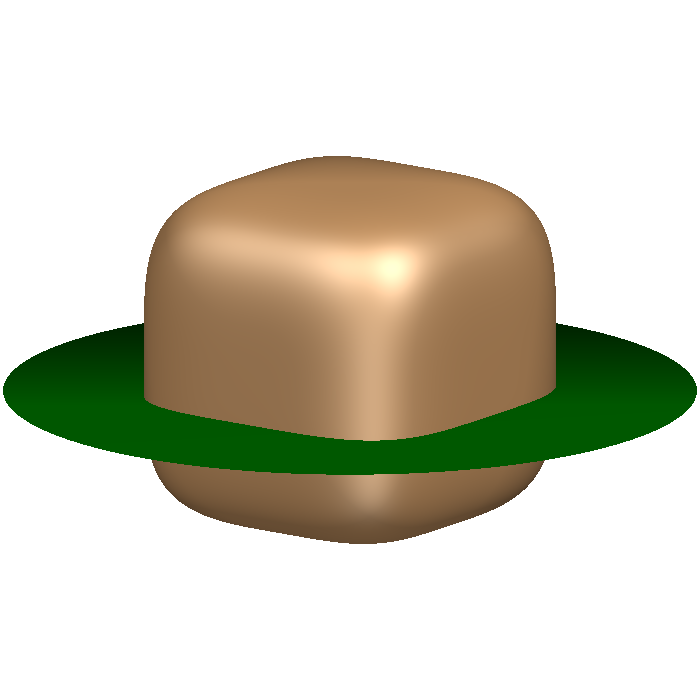}}
~
\subfloat[Figure~\ref{sfig:norm_plane_ortho} from perspective orthogonal to plane.]
{\label{sfig:norm_plane_ortho_intersect}
\includegraphics[width=0.23\textwidth]{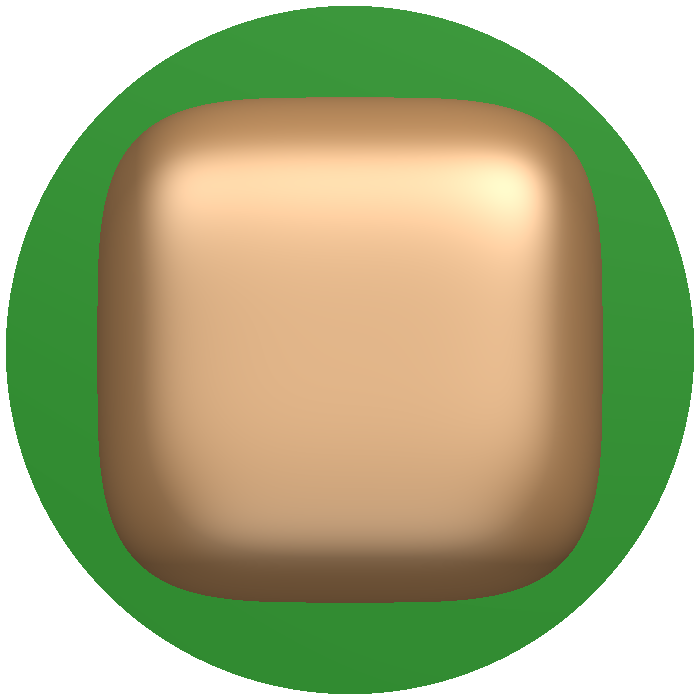}}
\\
\subfloat[$\mathcal{B}_{\Vert \cdot \Vert_3,1}(0)$ and plane intersecting the ball at an edge.]
{\label{sfig:norm_plane_45}
\includegraphics[width=0.23\textwidth]{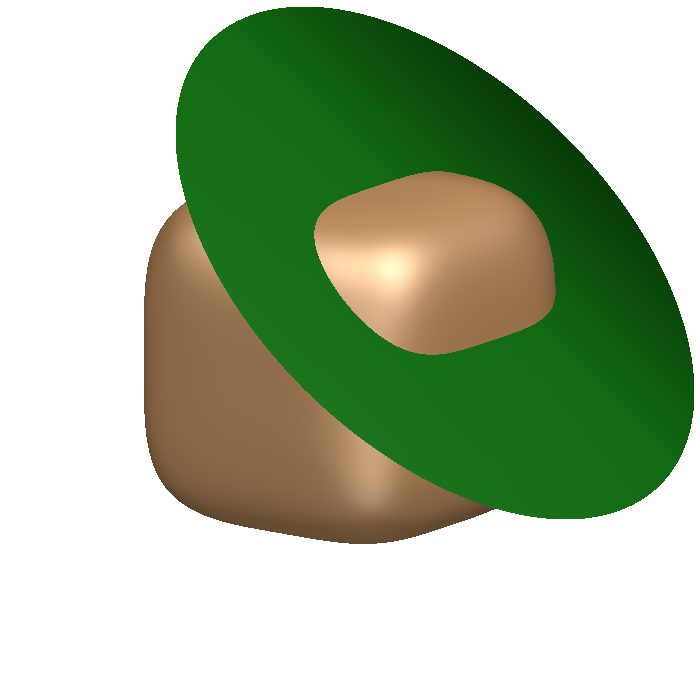}}
~
\subfloat[Figure~\ref{sfig:norm_plane_45} from perspective orthogonal to plane.]
{\label{sfig:norm_plane_45_intersect}
\includegraphics[width=0.23\textwidth]{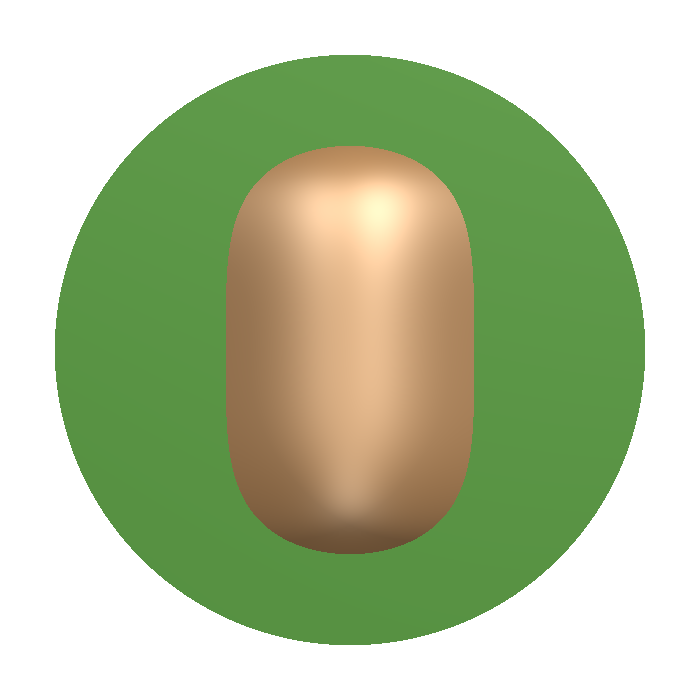}}
~
\subfloat[$\mathcal{B}_{\Vert \cdot \Vert_3,1}(0)$ and plane intersecting the ball at an edge.]
{\label{sfig:norm_plane_45extreme}
\includegraphics[width=0.23\textwidth]{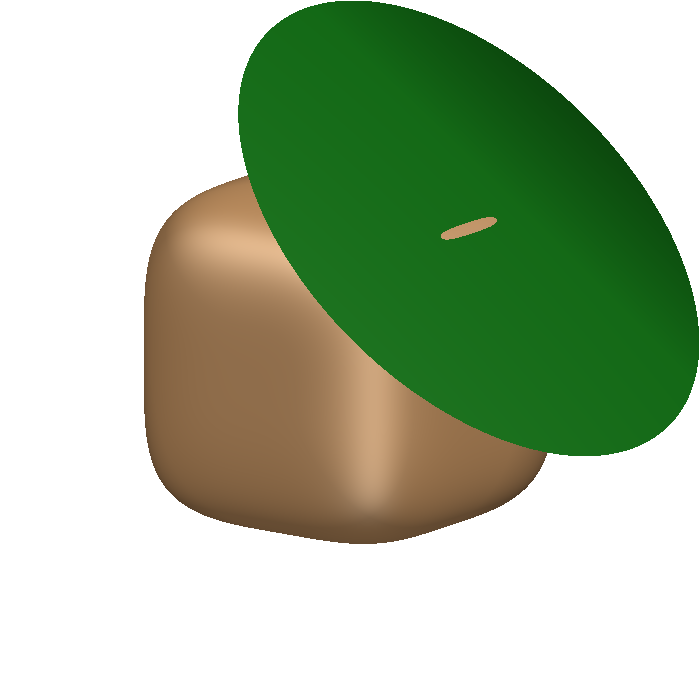}}
~
\subfloat[Figure~\ref{sfig:norm_plane_45extreme} from perspective orthogonal to plane.]
{\label{sfig:norm_plane_45extreme_intersect}
\includegraphics[width=0.23\textwidth]{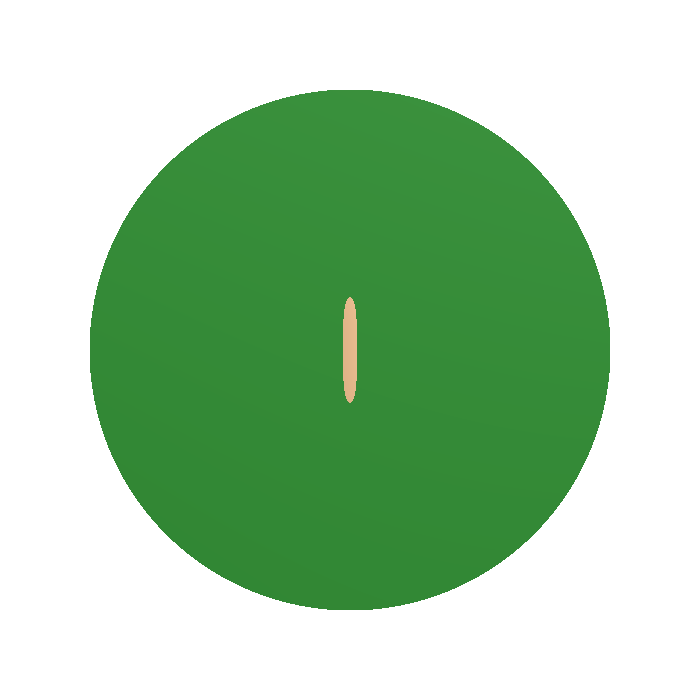}}
\caption{$\mathcal{B}_{\Vert \cdot \Vert_3,1}(0)$ intersecting different planes.}
\label{fig:3norm}
\end{figure}

\begin{figure}[tbhp]
\centering
\begin{tikzpicture}
\node (1) at (0,0) {\includegraphics[scale=1]{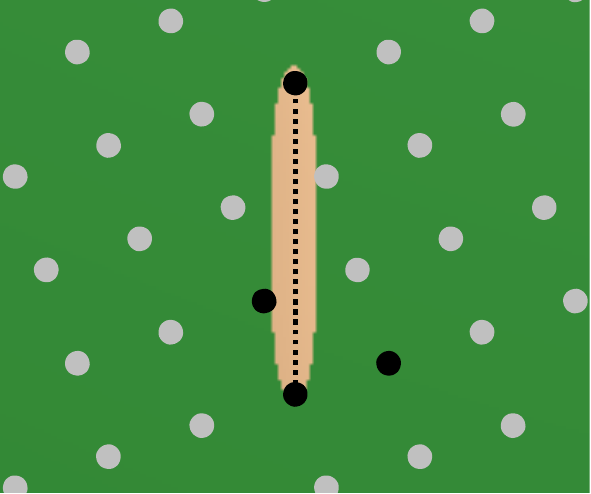}};
\node (2) at (-0.2,-1.7) {$0$};
\node (3) at (1.4,-1.4) {$b_{m,1}$};
\node (4) at (-0.8,-0.3) {$b_{m,2}$};
\node (5) at (0.15,2) {$b_{m,1}+m b_{m,2}$};
\end{tikzpicture}
\caption{Plane spanned by $0$, $b_{m,1}$ and $b_{m,2}$ intersects ball at an edge such that line segment between $0$ and $b_{m,1} + m b_{m,2}$ lies on the edge (cf. Figure~\ref{sfig:norm_plane_45extreme_intersect}).}
\label{fig:plane_through_ball}
\end{figure}

\begin{figure}[tbhp]
\centering
\begin{tikzpicture}
\node (1) at (0,0) {\includegraphics[width=0.9\textwidth]{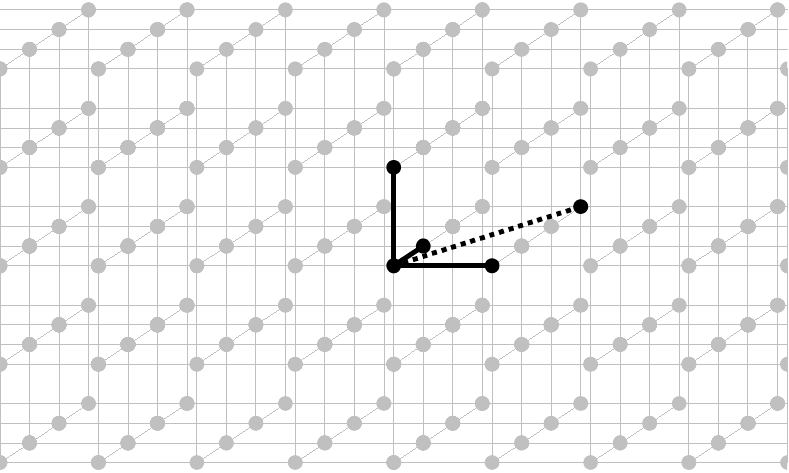}};
\node (2) at (-0.25,-0.8) {$0$};
\node (3) at (2.1,-0.8) {$e_1$};
\node (4) at (0.3,0.1) {$e_2$};
\node (5) at (-0.25,1) {$e_3$};
\node (6) at (4.3,0.8) {$e_1+me_2$};
\end{tikzpicture}
\caption{$\mathcal{L}(e_1, e_2, e_3)$.}
\label{fig:lattice1}
\end{figure}

\begin{figure}[tbhp]
\centering
\begin{tikzpicture}
\node (1) at (0,0) {\includegraphics[width=0.9\textwidth]{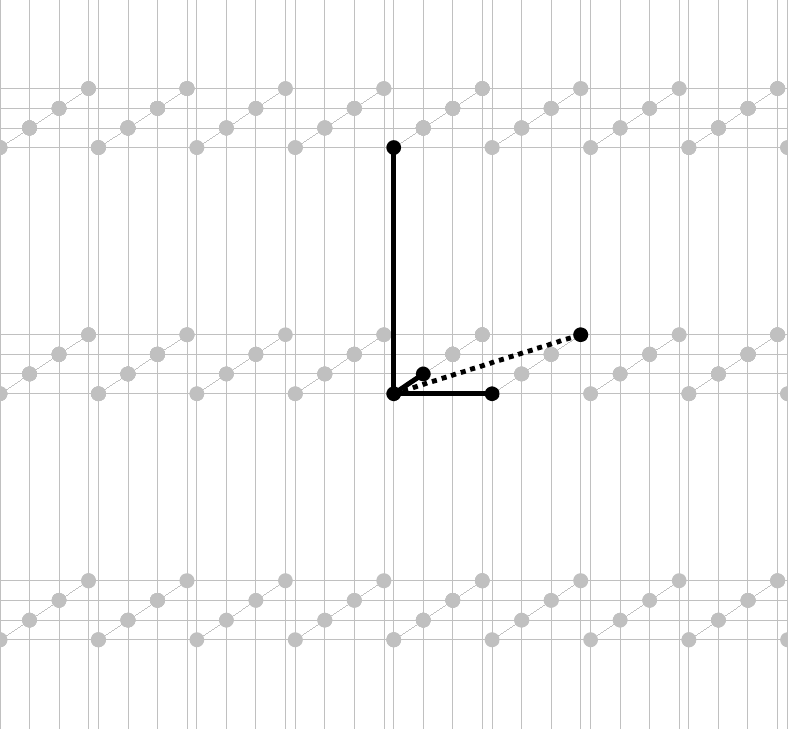}};
\node (2) at (-0.25,-0.8) {$0$};
\node (3) at (2.1,-0.8) {$e_1$};
\node (4) at (0.3,0.1) {$e_2$};
\node (5) at (-0.5,3.7) {$Me_3$};
\node (6) at (4.3,0.8) {$e_1+me_2$};
\end{tikzpicture}
\caption{$\mathcal{L}(e_1, e_2, M e_3)$:
Note that $M$ is so large that this figure is not scaled properly.}
\label{fig:lattice2}
\end{figure}

\begin{figure}[tbhp]
\centering
\begin{tikzpicture}
\node (1) at (0,0) {\includegraphics[width=0.9\textwidth]{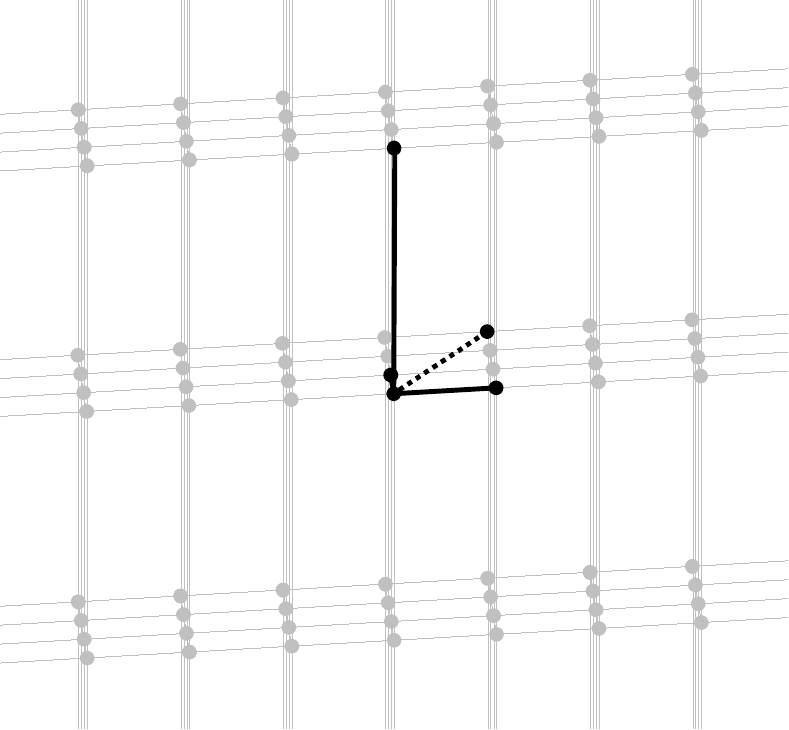}};
\node (2) at (-0.25,-0.8) {$0$};
\node (3) at (2.35,-0.7) {$R_ze_1$};
\node (4) at (-0.7,0) {$R_ze_2$};
\node (5) at (-0.5,3.7) {$Me_3$};
\node (6) at (2.9,0.9) {$R_z(e_1+me_2)$};
\end{tikzpicture}
\caption{$R_z \mathcal{L}(e_1, e_2, M e_3)$.}
\label{fig:lattice3}
\end{figure}

\begin{figure}[tbhp]
\centering
\begin{tikzpicture}
\node (1) at (0,0) {\includegraphics[width=0.9\textwidth]{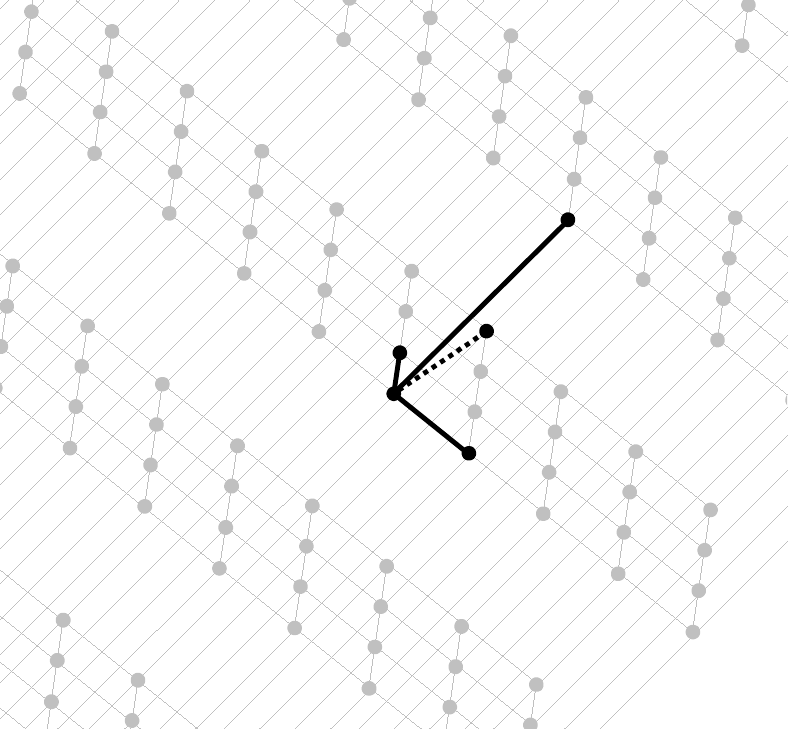}};
\node (2) at (-0.25,-0.8) {$0$};
\node (3) at (1.85,-1.95) {$b_{m,1}$};
\node (4) at (-0.4,0.3) {$b_{m,2}$};
\node (5) at (2.7,2.9) {$b_{m,3}$};
\node (6) at (2.9,0.9) {$b_{m,1}+mb_{m,2}$};
\end{tikzpicture}
\caption{$\mathcal{L}_m = R_y R_z \mathcal{L}(e_1, e_2, M e_3)$.}
\label{fig:lattice4}
\end{figure}

With these figurative ideas at hand, the rotations of $\mathcal{L}(e_1, e_2, M e_3)$ will now be described formally. These modifications of the standard lattice are also illustrated in Figures~\ref{fig:lattice1} to~\ref{fig:lattice4} for the case $m=3$.
First, $\mathcal{L}(e_1, e_2, M e_3)$ is rotated around the $z$-axis until $e_1 + m e_2$ lies on the $y$-axis, because all edges of the unit ball are parallel to the $x$-, $y$- or $z$-axis. This rotation is realized by the matrix
\begin{align*}
R_z := \begin{pmatrix}\frac{m}{\sqrt{{m}^{2}+1}} & -\frac{1}{\sqrt{{m}^{2}+1}} & 0\cr \frac{1}{\sqrt{{m}^{2}+1}} & \frac{m}{\sqrt{{m}^{2}+1}} & 0\cr 0 & 0 & 1\end{pmatrix}.
\end{align*}
Secondly, the resulting lattice $R_z \mathcal{L}(e_1, e_2, M e_3)$ is rotated around the $y$-axis by $45^\circ$ such that after the rotation the plane formerly spanned by $0$, $e_1$ and $e_2$ intersects translated unit balls of the $\ell_3$-norm at one of their edges. The second rotation is given by the matrix
\begin{align*}
R_y := \begin{pmatrix}\frac{1}{\sqrt{2}} & 0 & \frac{1}{\sqrt{2}}\cr 0 & 1 & 0\cr -\frac{1}{\sqrt{2}} & 0 & \frac{1}{\sqrt{2}}\end{pmatrix}.
\end{align*}

The resulting lattice $\mathcal{L}_m := R_y R_z \mathcal{L}(e_1, e_2, M e_3)$ is spanned by
\begin{align*}
b_{m,1} &:= R_y R_z e_1 = \frac{1}{\sqrt{{m}^{2}+1}} \begin{pmatrix}
\frac{m}{\sqrt{2}} \cr
1 \cr
-\frac{m}{\sqrt{2}}
\end{pmatrix}, \\
b_{m,2} &:= R_y R_z e_2 = \frac{1}{\sqrt{{m}^{2}+1}} \begin{pmatrix}
-\frac{1}{\sqrt{2}} \cr
m \cr
\frac{1}{\sqrt{2}} 
\end{pmatrix} \text{ and} \\
b_{m,3} &:= M R_y R_z e_3 = \frac{M}{\sqrt{2}} \begin{pmatrix}
1 \cr 
0 \cr
1
\end{pmatrix}.
\end{align*}
Using an appropriate scaling and translation of the unit ball, the situation in Figure~\ref{fig:plane_through_ball} can be reached. As already mentioned, this can be used to show that $b_{m,1}+m b_{m,2}$ is Voronoi-relevant if $M$ is sufficiently large.
For us it it is more important though that $\mathcal{L}_m$ has considerably more Voronoi-relevant vectors:
We show in the following Theorem~\ref{thm:main} that choosing $M:= 5 \sqrt{2} m^5$ implies that $b_{m,1}+k b_{m,2}$ is Voronoi-relevant with respect to $\Vert \cdot \Vert_3$ for every $k \in \mathbb{Z}, k \in [2,\sqrt{m}]$. 
Hence, we have for the $\ell_3$-norm that every $\mathcal{L}_m$ has $\Omega(\sqrt{m})$ Voronoi-relevant vectors, which implies Theorem~\ref{thm:general_dimensions}.

Note that our intuitive description above indicates that Theorem~\ref{thm:general_dimensions} holds for all $\ell_p$-norms with $p \in \mathbb{R}_{>2}$.
Our construction relies only on the fact that the unit ball of the $\ell_3$-norm is a cube with rounded edges and corners.
The $\ell_p$-unit balls interpolate between the Euclidean ball and the cube for $p \in (2, \infty)$.
Thus, our ideas apply to every $p$ in this range.
The only parameter one has to adapt to different $p$'s is the scaling factor $M$.
If $p$ grows, the $\ell_p$-unit ball tends to the cube and $M$ can be chosen smaller.
If $p$ converges to $2$, the $\ell_p$-unit ball starts to resemble  the Euclidean ball and $M$ tends to infinity.

\begin{theorem}
\label{thm:main}
For all $k,m \in \mathbb{Z}$ with $2 \leq k \leq \sqrt{m}$, we have that $b_{m,1}+k b_{m,2}$ is Voronoi-relevant in $\mathcal{L}_m$ with respect to $\Vert \cdot \Vert_3$.
\end{theorem}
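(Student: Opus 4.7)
The strategy is to construct, for each admissible $k$, an explicit witness point $x \in \mathbb{R}^3$ verifying Definition~\ref{defn:Voronoi_relevant} for $v := b_{m,1} + k b_{m,2}$: a point satisfying $\|x\|_3 = \|x - v\|_3 < \|x - w\|_3$ for every $w \in \mathcal{L}_m \setminus \{0, v\}$. I would look for $x$ inside the plane $P := \operatorname{span}(b_{m,1}, b_{m,2}) = \{y \in \mathbb{R}^3 : y_1 + y_3 = 0\}$, which already contains both $0$ and $v$. The reason this restriction suffices is that $b_{m,3} = (M/\sqrt 2)(1,0,1)$ is orthogonal to $P$ with Euclidean length $M = 5\sqrt 2\,m^5$, so any lattice point $z_1 b_{m,1} + z_2 b_{m,2} + z_3 b_{m,3}$ with $z_3 \neq 0$ lies at Euclidean distance at least $|z_3|\,M$ from $P$; combined with the norm equivalence $\|\cdot\|_2 \leq 3^{1/6}\|\cdot\|_3$ on $\mathbb{R}^3$, this gives $\|x - w\|_3 \geq M/3^{1/6}$ for all such $w$, which easily dominates $\|x\|_3$ once the chosen $x$ is of polynomial size in~$m$.

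The substantive task is therefore two-dimensional: choose $x \in P$ so that $0$ and $v$ are the uniquely closest points of $\mathcal{L}_m \cap P$ to $x$ in the restricted $\ell_3$-norm. Passing to the orthonormal basis $f_1 = (1,0,-1)/\sqrt 2$, $f_2 = (0,1,0)$ of $P$, the restricted norm becomes the anisotropic form $N(a,c) := (|a|^3/\sqrt 2 + |c|^3)^{1/3}$, the planar basis becomes $b_{m,1} = (m,1)/\sqrt{m^2+1}$ and $b_{m,2} = (-1,m)/\sqrt{m^2+1}$, and $v$ has coordinates $((m-k),(1+km))/\sqrt{m^2+1}$. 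Guided by the intuition illustrated in Figure~\ref{fig:plane_through_ball}, I would take $x$ on the $N$-bisector of $\{0, v\}$, positioned so that the planar $N$-ball $\mathcal{B}_{N, N(x)}(x)$ has $0$ and $v$ on its boundary and so that the boundary arc between them runs along the ``flat edge'' of the unit ball of~$N$ (the part near the $a$-direction extrema $a = \pm 2^{1/6}$). A direct check reveals that the midpoint $v/2$ itself is closer to $k b_{m,2}$ than to $0$ — the sign of $N(v/2)^3 - N(v/2 - k b_{m,2})^3$ is controlled by $3m^2k^2 - (3m^2k + k^3)/\sqrt 2$, which is positive for $k \geq 2$ — so $x$ must be shifted noticeably away from $v/2$; the exact location emerges from combining the bisector equation $N(x) = N(x-v)$ with the requirement that the most dangerous lattice neighbor $k b_{m,2}$ (equivalently $b_{m,1}$, by the symmetry $w \mapsto v - w$) sits just outside~$\mathcal{B}$.

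With $x$ fixed, the remaining verification is $N(x - w) > N(x)$ for every in-plane lattice vector $w = z_1 b_{m,1} + z_2 b_{m,2}$ with $(z_1, z_2) \notin \{(0,0), (1,k)\}$. I would combine a finite case analysis for the lattice neighbors lying close to the segment $[0, v]$ — chiefly $j b_{m,2}$ for $1 \leq j \leq k-1$ and $b_{m,1} + j b_{m,2}$ for $j$ near $k/2$, which sit only $O(1/m)$ away in the $a$-direction — with a packing-type bound (in the spirit of the proof of Proposition~\ref{prop:trivial_upper_bound}) that disposes of the far lattice points. The main obstacle is the analysis of these intermediate points: expanding $N(x - w)^3 - N(x)^3$ yields a positive contribution from horizontal displacements in the $a$-coordinate (amplified by the anisotropic weight $1/\sqrt 2$) and a negative contribution from vertical $c$-displacements, and the hypothesis $k \leq \sqrt m$ is exactly what is needed to make the horizontal term dominate. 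This delicate balancing between the rotation-induced offsets and the anisotropy of $N$ is the technical heart of the proof.
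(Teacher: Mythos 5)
Your plan has a fatal flaw at its core: the witness point $x$ cannot be taken inside the plane $P$ spanned by $b_{m,1}$ and $b_{m,2}$. If an in-plane witness existed, then---exactly as you set it up---the conditions $\Vert x \Vert_3 = \Vert x - v\Vert_3 < \Vert x - w\Vert_3$ for all in-plane lattice vectors $w$ would say precisely that $v = b_{m,1}+kb_{m,2}$ is Voronoi-relevant for the \emph{two-dimensional} lattice $\mathcal{L}(b_{m,1},b_{m,2})\subseteq P$ with respect to the strictly convex planar norm $N$. By Proposition~\ref{prop:2dimensions_strictly_convex_exact} that lattice has at most six Voronoi-relevant vectors, so your construction can succeed for at most three values of $k$, and for $m\geq 25$ it must fail for most $k\in[2,\sqrt m]$. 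The geometric obstruction is visible in the point you already noticed but did not resolve: the ball $B=\lbrace y : N(y-x)\leq N(x)\rbrace$ must contain the chord $[0,v]$ of $N$-length $\approx k$, yet for each $1\leq j\leq k-1$ the lattice points $jb_{m,2}$ and $b_{m,1}+jb_{m,2}$ flank the chord point $\tfrac{j}{k}v$ on the line in direction $b_{m,1}$, so the chord of $B$ through $\tfrac{j}{k}v$ in that direction must have length less than $N(b_{m,1})<1$. Taking $j\approx k/2$, this forces $v/2$ to lie within $N$-distance $\tfrac12$ of $\partial B$, whereas the modulus of convexity of the (weighted $\ell_3$) norm $N$ forces the midpoint of a boundary chord of length $\approx k$ to lie at depth on the order of $k^3/N(x)^2$; hence $N(x)$ must be at least of order $k^{3/2}$, and a concentric planar $N$-ball of that radius (which is nearly round, with eccentricity at most $2^{1/6}$) swallows many further lattice points. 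No in-plane center can produce the required $k:1$ eccentric cross-section.

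The paper's witness is emphatically \emph{not} in $P$: it is $x_{m,k}=\tfrac12(b_{m,1}+kb_{m,2})+(\tfrac{k^2}{4}+\tfrac13)m\,(1,0,1)^T$, displaced from $P$ by Euclidean distance $\sqrt2(\tfrac{k^2}{4}+\tfrac13)m=\Theta(k^2m)$ along the edge-normal direction $(1,0,1)$. The eccentric region containing $[0,v]$ is obtained as a near-tangent slice of a three-dimensional $\ell_3$-ball of radius $\Theta(k^2m)$ taken close to one of its rounded edges; this is the entire point of the three-dimensional construction, of Figure~\ref{fig:plane_through_ball}, and of the hypotheses $M=5\sqrt2m^5$ and $k\leq\sqrt m$. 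The peripheral parts of your plan are sound and match the paper---the dismissal of lattice vectors with $z_3\neq0$ (the paper's Claim~1, via Lemma~\ref{lem:distance_to_plain}), the symmetry $w\mapsto v-w$ (the paper's identity $f(r_1,r_2)=f(1-r_1,k-r_2)$), and your correct computation that $v/2$ itself fails---but the strategy for locating the witness, which is the heart of the proof, does not work.
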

In the proof of this statement, we need to calculate the distance between some $v \in \mathbb{R}^3$ and the plane spanned by $0$, $e_1$ and $e_2$ after this plane is translated along the $z$-axis and rotated around the $y$-axis by $45^\circ$.
\begin{lemma}
\label{lem:distance_to_plain}
For $C \in \mathbb{R}$ and $v = (\alpha, \beta, \gamma)^T \in \mathbb{R}^3$,
the unique point in $E_C := R_y ( \mathbb{R}e_1 + \mathbb{R} e_2 + C e_3 ) $ which is closest to $v$ with respect to $\Vert \cdot \Vert_3$ is $R_y (  \frac{\alpha-\gamma}{\sqrt{2}}, \beta, C )^T$, and  $\Vert v-E_C \Vert_3^3 = \frac{1}{4} |\sqrt{2}C-\alpha-\gamma|^3.$ 
\end{lemma}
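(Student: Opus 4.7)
The plan is to reduce the three-dimensional minimization to a one-parameter convex optimization by exploiting the linear description of $E_C$.

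First, I would compute the parametrization of $E_C$. Since $R_y$ fixes $e_2$, sends $e_1$ to $\tfrac{1}{\sqrt{2}}(1,0,-1)^T$ and $e_3$ to $\tfrac{1}{\sqrt{2}}(1,0,1)^T$, a general element of $E_C$ has the form $R_y(s,t,C)^T = \left( \tfrac{s+C}{\sqrt{2}}, t, \tfrac{C-s}{\sqrt{2}} \right)^T$. Eliminating $s$ and $t$, the plane $E_C$ is exactly the affine hyperplane
\begin{equation*}
E_C = \{ (x,y,z)^T \in \mathbb{R}^3 \mid x+z = \sqrt{2}\, C\}.
\end{equation*}

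Next, writing a candidate closest point as $(x,y,z)^T \in E_C$, the function to minimize is
\begin{equation*}
|\alpha - x|^3 + |\beta - y|^3 + |\gamma - z|^3.
\end{equation*}
The variable $y$ is unconstrained, so the minimum with respect to $y$ is obviously attained uniquely at $y = \beta$, killing the middle term. Substituting $z = \sqrt{2}C - x$ and introducing $a := \alpha - x$ and $b := \gamma - z$, I observe that the sum $a + b = \alpha + \gamma - \sqrt{2}C$ is a constant $S$ (independent of $x$), while $a$ ranges freely over $\mathbb{R}$. Thus the problem reduces to minimizing $f(a) := |a|^3 + |S-a|^3$ over $a \in \mathbb{R}$.

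The key one-variable fact I would use is that $t \mapsto t|t|$ is strictly monotone increasing on $\mathbb{R}$. Since $f$ is differentiable with $f'(a) = 3a|a| - 3(S-a)|S-a|$, setting $f'(a) = 0$ gives $a|a| = (S-a)|S-a|$, whose unique solution is $a = S/2$. Convexity of $f$ (as a sum of convex functions) certifies that this critical point is the unique global minimum. Plugging back, $x = \alpha - S/2 = \tfrac{\alpha - \gamma}{2} + \tfrac{C}{\sqrt{2}}$ and $z = \tfrac{C}{\sqrt{2}} - \tfrac{\alpha - \gamma}{2}$, which matches $R_y(\tfrac{\alpha-\gamma}{\sqrt{2}}, \beta, C)^T$ exactly via the parametrization from the first step. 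The minimum value is $f(S/2) = 2 |S/2|^3 = \tfrac{1}{4}|S|^3 = \tfrac{1}{4}|\sqrt{2}C - \alpha - \gamma|^3$, giving the stated formula.

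There is really no serious obstacle here — the lemma is essentially a bookkeeping exercise. The only technical point that must be handled carefully is the uniqueness claim, which requires knowing that $t \mapsto t|t|$ is strictly increasing (so that $f$ is strictly convex and $a = S/2$ is the unique critical point); everything else is substitution and arithmetic.
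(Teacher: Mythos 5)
Your proof is correct and follows essentially the same route as the paper: the paper's entire argument is the observation that $x \mapsto |C+x|^3 + |D-x|^3$ is minimized at $\frac{D-C}{2}$ with value $\frac{1}{4}|D+C|^3$, which is exactly your one-variable reduction $f(a) = |a|^3 + |S-a|^3$ after eliminating $y$ and using the constraint $x+z=\sqrt{2}C$. You have merely written out the bookkeeping (the hyperplane description of $E_C$ and the strict monotonicity of $t\mapsto t|t|$) that the paper leaves implicit.
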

\begin{proof}
 This follows from the fact that the function $f: \mathbb{R} \to \mathbb{R}_{\geq 0}, 
x \mapsto |C+x|^3 + |D-x|^3$ has a global minimum at $\frac{D-C}{2}$ with function value $\frac{1}{4}|D+C|^3$ if $C,D \in \mathbb{R}$ and $D \neq -C$.
\end{proof}
\begin{proofofmaintheorem}
For $k,m \in \mathbb{Z}$ with $m \geq k \geq 2$ define
\begin{align*}
x_{m,k} := \frac{1}{2} (b_{m,1} + k b_{m,2}) + \left( \begin{array}{c}
\left( \frac{k^2}{4} + \frac{1}{3} \right) m \\
0 \\
\left( \frac{k^2}{4} + \frac{1}{3} \right) m 
\end{array} \right)
= \left( \begin{array}{c}
\frac{m-k}{2\sqrt{2} \sqrt{m^2+1}} + \left( \frac{k^2}{4} + \frac{1}{3} \right) m \\
\frac{km+1}{2 \sqrt{m^2+1}} \\
\frac{k-m}{2\sqrt{2} \sqrt{m^2+1}} + \left( \frac{k^2}{4} + \frac{1}{3} \right) m
\end{array} \right).
\end{align*}
Then we have that $\Vert x_{m,k} \Vert_3 = \Vert b_{m,1} + k b_{m,2} - x_{m,k} \Vert_3$. 
To prove Theorem~\ref{thm:main}, we need to show $\Vert x_{m,k} \Vert_3 < \Vert x_{m,k} - v \Vert_3$ for all $v \in \mathcal{L}_m \setminus \lbrace 0, b_{m,1} + k b_{m,2} \rbrace$ and all $m \geq k^2$.
\begin{claim}
\label{claim:z3irrel}
$\Vert x_{m,k} \Vert_3 < \Vert z_1 b_{m,1} + z_2 b_{m,2} + z_3 b_{m,3} -x_{m,k} \Vert_3$ for all all $m \geq k$ and all $z_1, z_2, z_3 \in \mathbb{Z}, z_3 \neq 0$.
\end{claim}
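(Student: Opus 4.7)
The plan is to exploit that when $z_3 \neq 0$, the lattice point $v := z_1 b_{m,1} + z_2 b_{m,2} + z_3 b_{m,3}$ is forced onto a plane parallel to $\spann(b_{m,1}, b_{m,2})$ but translated by $|z_3| M$ away from it, while $x_{m,k}$ lives essentially on $\spann(b_{m,1}, b_{m,2})$. Since $M = 5\sqrt{2}\,m^5$ is chosen polynomially much larger than any coordinate of $x_{m,k}$, the distance from $x_{m,k}$ to $v$ must dwarf $\Vert x_{m,k}\Vert_3$.

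First, I would observe that $b_{m,3} = R_y(0,0,M)^T$ and $b_{m,1}, b_{m,2} \in R_y(\mathbb{R}e_1+\mathbb{R}e_2)$, hence $v \in E_{z_3 M}$ in the notation of Lemma~\ref{lem:distance_to_plain}. That lemma immediately yields
\begin{align*}
\Vert v - x_{m,k}\Vert_3^3 \;\geq\; \Vert x_{m,k} - E_{z_3 M}\Vert_3^3 \;=\; \tfrac{1}{4}\bigl|\sqrt{2}\,z_3 M - \alpha - \gamma\bigr|^3,
\end{align*}
where $(\alpha,\beta,\gamma)^T := x_{m,k}$.

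Next, from the explicit formula for $x_{m,k}$ the first and third coordinates differ only by a sign in their first summand, so $\alpha + \gamma = \bigl(\tfrac{k^2}{2} + \tfrac{2}{3}\bigr)m$. Using $k \leq m$ and $m \geq 2$, this is bounded by $m^3$. On the other hand, $\sqrt{2}M = 10 m^5$ and $|z_3| \geq 1$, so by the reverse triangle inequality
\begin{align*}
\bigl|\sqrt{2}\,z_3 M - (\alpha + \gamma)\bigr| \;\geq\; 10\,|z_3|\,m^5 - |\alpha + \gamma| \;\geq\; 10 m^5 - m^3 \;\geq\; 9 m^5.
\end{align*}
Thus $\Vert v - x_{m,k}\Vert_3^3 \geq \tfrac{729}{4}\,m^{15}$.

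It remains to see that $\Vert x_{m,k}\Vert_3$ is much smaller. The trivial coordinate bounds $|\alpha|,|\gamma| \leq \tfrac{1}{2\sqrt{2}} + \bigl(\tfrac{k^2}{4}+\tfrac{1}{3}\bigr)m \leq m^3$ and $|\beta| \leq 1$ (routine, using $k \leq m$ and $m \geq 2$) give $\Vert x_{m,k}\Vert_3^3 \leq 2 m^9 + 1 \leq 3 m^9$. Comparing $3m^9$ with $\tfrac{729}{4}m^{15}$ for $m \geq 2$ settles the claim. The only subtle step is the identification $v \in E_{z_3 M}$ so that Lemma~\ref{lem:distance_to_plain} applies; everything afterwards is a polynomial-degree comparison powered by the choice $M = 5\sqrt{2}\,m^5$.
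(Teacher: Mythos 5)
Your proposal follows essentially the same route as the paper: identify $v \in E_{Mz_3}$, invoke Lemma~\ref{lem:distance_to_plain}, and win by a polynomial-degree comparison driven by $M = 5\sqrt{2}\,m^5$. One intermediate bound is false, however: $\beta = \frac{km+1}{2\sqrt{m^2+1}}$ is not at most $1$ (already for $k=2$ it exceeds $1$, and for general $k \le m$ it is roughly $k/2$); the correct routine bound is $|\beta| \le \frac{k}{2} + \frac{1}{2m} \le m$, which gives $\Vert x_{m,k}\Vert_3^3 \le 2m^9 + m^3 \le 3m^9$ and leaves your final comparison with $\tfrac{729}{4}m^{15}$ intact. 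So the argument is sound once that slip is repaired.
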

\begin{proofofclaim}
Let $z_1, z_2, z_3 \in \mathbb{Z}$.
Since $R_z e_3 = e_3$ and $z_1 R_z e_1 + z_2 R_z e_2 \in \mathbb{R}e_1 + \mathbb{R}e_2$, we have that $z_1 b_{m,1} + z_2 b_{m,2} + z_3 b_{m,3} = R_y (z_1 R_z e_1 + z_2 R_z e_2 + M z_3 e_3) \in E_{M z_3}$.
Hence, it follows from  Lemma~\ref{lem:distance_to_plain} and $M= 5 \sqrt{2} m^5$ that
\begin{align*}
\Vert z_1 b_{m,1} + z_2 b_{m,2} + z_3 b_{m,3} -x_{m,k} \Vert_3^3 
&\geq \Vert x_{m,k} - E_{M z_3} \Vert_3^3 \\
&= \frac{1}{4} \left|\sqrt{2} M z_3 - 2 \left( \frac{k^2}{4}+\frac{1}{3} \right) m \right|^3 \\
&= \frac{1}{4} \cdot 1000 m^{15} \left| z_3 - \frac{1}{5 m^4} \left(\frac{k^2}{4} +\frac{1}{3} \right) \right|^3.
\end{align*}
The prerequisite $m \geq k \geq 2$ yields $\frac{1}{5 m^4} \left(\frac{k^2}{4} +\frac{1}{3} \right) \in \left( 0, \frac{1}{60} \right]$.
Thus, for $z_3 \in \mathbb{Z} \setminus \lbrace 0 \rbrace$, the inequality $z_3-\frac{1}{5 m^4} \left(\frac{k^2}{4} +\frac{1}{3} \right) \geq 0$ is equivalent to $z_3 \geq 1$,
 and $\left| z_3 - \frac{1}{5 m^4} \left(\frac{k^2}{4} +\frac{1}{3} \right) \right|$ is minimized for $z_3=1$.
 This shows
\begin{align*}
\Vert z_1 b_{m,1} + z_2 b_{m,2} + z_3 b_{m,3} -x_{m,k} \Vert_3^3 
&\geq 250 m^{15} \left( 1 - \frac{1}{5 m^4} \left(\frac{k^2}{4} +\frac{1}{3} \right) \right)^3
> 200 m^{15}.
\end{align*}
The desired inequality follows from $\Vert x_{m,k} \Vert_3^3 < 4 m^{15}$. 
\end{proofofclaim}
Due to Claim~\ref{claim:z3irrel}, it is left to show that the restriction of
\begin{align*}
f: \mathbb{R} \times \mathbb{R} &\longrightarrow \mathbb{R}_{\geq 0}, \\
(r_1, r_2) &\longmapsto \Vert r_1 b_{m,1} + r_2 b_{m,2}  -x_{m,k} \Vert_3^3
\end{align*}
to $\mathbb{Z} \times \mathbb{Z}$ is globally minimized at $(0,0)$ and $(1,k)$ if $m \geq k^2$.
The global minimum of $f$ over $\mathbb{R} \times \mathbb{R}$ is achieved at $(\frac12, \frac{k}{2})$, which follows from Lemma~\ref{lem:distance_to_plain} since $r_1 b_{m,1} + r_2 b_{m,2} \in E_0$ for all $r_1,r_2 \in \mathbb{R}$.
Moreover, $f$ is symmetric about $(\frac12, \frac{k}{2})$, i.e., $f(r_1,r_2) = f(1-r_1,k-r_2)$.
The latter can be seen, using the abbreviation $\alpha_{m,k} := ( \frac{k^2}{4} + \frac{1}{3} ) m$, as follows:
\begin{align*}
  f(1-r_1,k-r_2)
  &= \left\Vert \left( \frac12-r_1 \right) b_{m,1} + \left( \frac{k}{2}-r_2 \right) b_{m,2} - (\alpha_{m,k},0,\alpha_{m,k})^T \right\Vert_3^3\\
  &= \left\Vert \left( r_1-\frac12 \right) b_{m,1} + \left( r_2-\frac{k}{2} \right) b_{m,2} - (\alpha_{m,k},0,\alpha_{m,k})^T \right\Vert_3^3\\
  &= f(r_1,r_2),
\end{align*}
where the middle equation holds since $( \frac12-r_1 ) b_{m,1} + ( \frac{k}{2}-r_2) b_{m,2}$ is of the form $( \beta_1, \beta_2, - \beta_1)^T$.
By this symmetry, it is enough to show that $f$ restricted to $\mathbb{Z}_{\leq 0} \times \mathbb{Z}$ has a unique minimum at $(0,0)$.
We complete this proof by comparing $f(0,0)$ with the values of $f$ restricted to $\lbrace 0 \rbrace \times \mathbb{Z}$ in Claim~\ref{claim:z1is0} and then with the values of $f$ restricted to $\mathbb{Z}_{< 0} \times \mathbb{Z}$ in Claim~\ref{claim:z1smaller0}.
\begin{claim}
\label{claim:z1is0}
$f(0,0) < f(0,z_2)$ for all $z_2 \in \mathbb{Z} \setminus \lbrace 0 \rbrace$.
\end{claim}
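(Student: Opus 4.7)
The plan is to reduce the claim, via convexity, to two explicit base inequalities. First observe that $h: \mathbb{R} \to \mathbb{R}_{\geq 0}$ defined by $h(r_2) := f(0, r_2)$ is convex, being a sum of cubes of absolute values of affine functions of $r_2$. So once we establish $h(0) < h(1)$ and $h(0) < h(-1)$, then for every integer $z_2 \geq 2$ we can write $1 = \tfrac{1}{z_2} \cdot z_2 + \tfrac{z_2 - 1}{z_2} \cdot 0$ and invoke the convexity inequality $h(1) \leq \tfrac{1}{z_2} h(z_2) + \tfrac{z_2 - 1}{z_2} h(0)$ to conclude $h(z_2) \geq z_2\, h(1) - (z_2 - 1)\, h(0) > h(0)$; the case $z_2 \leq -2$ is handled symmetrically using $h(-1) > h(0)$.

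Next, to prove the two base inequalities, we derive a closed form for $h$ at $r_2 \in \{-1, 0, 1\}$. Let $L := \sqrt{m^2 + 1}$. The first and third coordinates of $r_2\, b_{m,2} - x_{m,k}$ take the form $-U(r_2) - \alpha_{m,k}$ and $U(r_2) - \alpha_{m,k}$ with $U(r_2) := (2 r_2 + m - k)/(2 \sqrt{2}\, L)$, while the second coordinate equals $V(r_2) := ((2 r_2 - k) m - 1)/(2 L)$. Under $m \geq k^2 \geq 4$ and $k \geq 2$, a direct check yields $|U(r_2)| < 1 < \alpha_{m,k}$ for $r_2 \in \{-1, 0, 1\}$ (bounding $|2 r_2 + m - k| \leq m$), and $V(r_2) < 0$ whenever $r_2 \leq 1$ (since $2 r_2 - k \leq 0$). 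The identity $(\alpha_{m,k} + U)^3 + (\alpha_{m,k} - U)^3 = 2 \alpha_{m,k}^3 + 6 \alpha_{m,k} U^2$ then gives
\begin{align*}
h(r_2) \;=\; 2 \alpha_{m,k}^3 + 6 \alpha_{m,k}\, U(r_2)^2 - V(r_2)^3 \qquad \text{for } r_2 \in \{-1, 0, 1\} .
\end{align*}

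Finally, setting $P := k m + 1$, $Q := (k - 2) m + 1$ and $Q' := (k + 2) m + 1$ (so that $P - Q = Q' - P = 2 m$) and applying $A^3 - B^3 = (A - B)(A^2 + A B + B^2)$ to the cubic differences, the inequalities $h(\pm 1) > h(0)$ reduce, after clearing a common factor $1/(4 L^3)$, to
\begin{align*}
12 \, \alpha_{m,k} \, L \, (m - k + 1) &\;>\; m\, (P^2 + P Q + Q^2), \\
m\, (Q'^2 + Q' P + P^2) &\;>\; 12 \, \alpha_{m,k} \, L \, (m - k - 1) .
\end{align*}
After substituting $12\, \alpha_{m,k} = (3 k^2 + 4)\, m$, using $m \leq L \leq m + 1/m$, and expanding in powers of $m$, the $m^3$-contribution to $(\text{LHS} - \text{RHS})$ in each inequality works out to $+ 6 k\, m^3$; the remaining lower-order terms are of size $O(k^3 m^2)$ and are comfortably absorbed under the hypothesis $m \geq k^2$. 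The main obstacle is the algebraic bookkeeping around the irrational factor $L = \sqrt{m^2 + 1}$ and the cubic expansions; the essential conceptual step is the convexity reduction, which collapses the infinite family of integer inequalities to just the two base cases $z_2 = \pm 1$.
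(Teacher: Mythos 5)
Your proposal is correct and follows essentially the same route as the paper: convexity of $f(0,\cdot)$ (a sum of cubes of absolute values of affine functions) reduces the claim to the two base cases $z_2=\pm 1$, which are then settled by the expansion $(\alpha+U)^3+(\alpha-U)^3=2\alpha^3+6\alpha U^2$ and an explicit comparison of the resulting polynomials in $m$ and $k$ — exactly the paper's computation, merely repackaged via $P,Q,Q'$ and the factorization $A^3-B^3=(A-B)(A^2+AB+B^2)$. The final arithmetic is only asserted ("$+6km^3$ dominates"), but the assertion is accurate and the lower-order terms are indeed controlled by $m\geq k^2$, so nothing is missing in substance.
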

\begin{proofofclaim}
The function $f$ is strictly convex since $x \mapsto \Vert x \Vert_3^3$ is a strictly convex function. 
  Thus, it is enough to show $f(0,0)<f(0,1)$ and $f(0,0)<f(0,-1)$.
  Due to $\sqrt{m} \geq k \geq 2$ we have that 
\begin{align*}
  f(0,0)
&= \left(\frac{m-k}{2\sqrt{2} \sqrt{m^2+1}} + \left( \frac{k^2}{4} + \frac{1}{3} \right) m \right)^3 + \left( \frac{k-m}{2\sqrt{2} \sqrt{m^2+1}} + \left( \frac{k^2}{4} + \frac{1}{3} \right) m \right)^3 \\
&\,\quad\, + \left(\frac{km+1}{2 \sqrt{m^2+1}} \right)^3 \\
&= 2m^3 \left( \frac{k^2}{4} + \frac{1}{3} \right)^3  + 6m \left( \frac{k^2}{4} + \frac{1}{3} \right) \left( \frac{m-k}{2\sqrt{2} \sqrt{m^2+1}} \right)^2  + \left(\frac{km+1}{2 \sqrt{m^2+1}} \right)^3,
\end{align*}
and analogously
\begin{align*}
f(0,1)  
&= 2m^3 \left( \frac{k^2}{4} + \frac{1}{3} \right)^3 \hspace*{-1mm}  + 6m \left( \frac{k^2}{4} + \frac{1}{3} \right) \left( \frac{m-k+2}{2\sqrt{2} \sqrt{m^2+1}} \right)^2 \hspace*{-1mm}+ \left(\frac{(k-2)m+1}{2 \sqrt{m^2+1}} \right)^3,\\
f(0,-1)
&= 2m^3 \left( \frac{k^2}{4} + \frac{1}{3} \right)^3 \hspace*{-1mm} + 6m \left( \frac{k^2}{4} + \frac{1}{3} \right) \left( \frac{m-k-2}{2\sqrt{2} \sqrt{m^2+1}} \right)^2 \hspace*{-1mm} + \left(\frac{(k+2)m+1}{2 \sqrt{m^2+1}} \right)^3.
\end{align*}
First, we use $\sqrt{m^2+1} > m$ and $m \geq k^2$ to derive
\begin{align*}
  \frac{8\sqrt{m^2+1}^3}{m} \left( f(0,1)-f(0,0) \right)
  &= 24\left( \frac{k^2}{4} + \frac{1}{3} \right)\sqrt{m^2+1}(m-k+1) \\ &\,\quad\, + m^2(-6k^2+12k-8)+m(-12k+12)-6\\
  &> 12 m^2 k + m(-6k^3+6k^2-20k+20)-6 \\
  &\geq 6mk^3+6mk^2-20mk+20m-6,
\end{align*}
which is positive since $20m > 6$ and $6mk^3 > 20mk$ due to $k \geq 2$. This shows $f(0,0)<f(0,1)$.
Secondly, we consider
\begin{align}
  \label{eq:f01}
  \begin{split}
  \frac{8\sqrt{m^2+1}^3}{m} \left( f(0,-1)-f(0,0) \right)
  &= \sqrt{m^2+1} \left(6k^3+6k^2+8k+8 - m(6k^2+8) \right)
  \\&\,\quad\, + m^2(6k^2+12k+8)+m(12k+12)+6.
\end{split}
\end{align}
Since $m \geq k$, we have that $m^2 \left(  (6k^2+12k+8)^2 -  (6k^2+8)^2  \right) > (6k^2+8)^2$, which is equivalent to
$m(6k^2+12k+8) > \sqrt{m^2+1}(6k^2+8)$.
Hence,~\eqref{eq:f01} is positive and $f(0,0)<f(0,-1)$.
\end{proofofclaim}
\begin{claim}
\label{claim:z1smaller0}
$f(0,0) < f(z_1,z_2)$ for all $z_1,z_2 \in \mathbb{Z}, z_1 < 0$.
\end{claim}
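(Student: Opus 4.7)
The plan is to reduce Claim~\ref{claim:z1smaller0} to a single one-parameter infimum via strict convexity and Claim~\ref{claim:z1is0}, and then to resolve it by an explicit Lagrange calculation.

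\textbf{Reduction via convexity.}
Fix $(z_1, z_2) \in \mathbb{Z}^2$ with $z_1 \leq -1$. If $z_1 \leq -2$, set $t := 2/(1 - z_1) \in (0, 1)$ and $z_2' := t z_2 + (1-t) k \in \mathbb{R}$, so that $(-1, z_2') = t (z_1, z_2) + (1-t)(1, k)$. Strict convexity of $f$, together with $f(1, k) = f(0, 0)$, yields $f(-1, z_2') < t f(z_1, z_2) + (1-t) f(0, 0)$. Rearranging, if $f(-1, r_2) > f(0, 0)$ for every $r_2 \in \mathbb{R}$, then $f(z_1, z_2) > f(0, 0)$. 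The case $z_1 = -1$ is covered by $f(-1, z_2) \geq \inf_{r_2 \in \mathbb{R}} f(-1, r_2)$. Hence it suffices to prove
\begin{align*}
(\star) \qquad \inf_{r_2 \in \mathbb{R}} f(-1, r_2) > f(0, 0).
\end{align*}

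\textbf{Coordinates and Lagrange analysis.}
Componentwise, $r_1 b_{m,1} + r_2 b_{m,2} - x_{m,k}$ has first and third coordinates $c - \alpha$ and $-c - \alpha$ with $\alpha := \alpha_{m,k} = (k^2/4 + 1/3) m$ and $c := \frac{(2r_1-1)m - (2r_2-k)}{2\sqrt{2}\sqrt{m^2+1}}$, and second coordinate $s := \frac{(2r_2-k)m + (2r_1-1)}{2\sqrt{m^2+1}}$. A short case analysis on the sign of $c \pm \alpha$ yields the pointwise inequality $|c - \alpha|^3 + |c + \alpha|^3 \geq 2 \alpha^3 + 6 \alpha c^2$ for every $c \in \mathbb{R}$, with equality whenever $|c| \leq \alpha$ (which holds at $(0,0)$). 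Hence $f(r_1, r_2) \geq 2 \alpha^3 + 6 \alpha c^2 + |s|^3$, an equality at $(0, 0)$. Fixing $r_1 = -1$ and eliminating $r_2$ shows that $(c, s)$ traces the affine line $s + \sqrt{2}\, m\, c = -\frac{3}{2} \sqrt{m^2+1}$. The strictly convex minimization of $6 \alpha c^2 + |s|^3$ along this line is resolved by Lagrange multipliers: its unique optimum has $s^* < 0$ satisfying $(s^*)^2 m^2 + 2 \alpha |s^*| = 3 \alpha \sqrt{m^2+1}$, and substituting back one finds
\begin{align*}
M := 6 \alpha (c^*)^2 + |s^*|^3 = \frac{9 (s^*)^2 \sqrt{m^2+1}}{4} - \frac{|s^*|^3}{2}.
\end{align*}

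\textbf{Comparison.}
The critical equation yields $(s^*)^2 \leq 3 \alpha \sqrt{m^2+1}/m^2 \leq 3 \sqrt{2} (k^2/4 + 1/3)$, hence $|s^*| = O(k)$ and $|s^*|^3 = O(k^3)$; expanding $M$ gives $M \geq \frac{27 \alpha}{4} - O(k^3)$. On the other hand, the formula $f(0, 0) = 2 \alpha^3 + 6 \alpha c_{00}^2 + |s_{00}|^3$ from the proof of Claim~\ref{claim:z1is0}, combined with $c_{00}^2 \leq 1/8$ and $(km+1)^3/(8 m^3) = O(k^3)$ (using $k \leq \sqrt{m}$), yields $f(0, 0) - 2 \alpha^3 \leq \frac{3 \alpha}{4} + O(k^3)$. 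Subtracting, $M - (f(0, 0) - 2 \alpha^3) \geq 6 \alpha - O(k^3)$. Since $6 \alpha = \frac{3}{2} k^2 m + 2 m \geq 8 m$ dominates the corrections $O(k^3) = O(k^2 \sqrt{m})$ whenever $m \geq k^2 \geq 4$, the right-hand side is strictly positive, establishing $(\star)$. The main obstacle is the sharp bookkeeping of the implicit constants in the $O(k^3)$ terms to guarantee uniform positivity across the whole range $2 \leq k \leq \sqrt{m}$; since the leading gap $6\alpha$ comfortably dominates the $O(k^3)$ corrections, the remaining arithmetic is routine.
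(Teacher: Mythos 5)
Your argument is correct in substance but follows a genuinely different route from the paper. The paper proves Claim~\ref{claim:z1smaller0} by brute force: it expands $f(z_1,z_2)$ for arbitrary $(z_1,z_2)$ with $z_1<0$, first disposes of the case where one of the two ``$c\pm\alpha$'' terms is negative by a separate contradiction argument, and then reduces the remaining case to the positivity of an explicit polynomial expression $g(z_1,z_2)$, which it verifies by splitting on which of two linear inequalities in $(z_1,z_2)$ holds. You instead exploit the strict convexity of $f$ (which the paper also invokes, but only in Claim~\ref{claim:z1is0}) to collapse the entire half-plane $z_1\le -1$ onto the single line $r_1=-1$, and then resolve a one-dimensional constrained minimization of $6\alpha c^2+|s|^3$ by Lagrange multipliers. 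Your reduction is valid (the point $(-1,z_2')$ is a convex combination of $(z_1,z_2)$ and $(1,k)$, and $f(1,k)=f(0,0)$ by the symmetry established earlier), the pointwise bound $|c-\alpha|^3+|c+\alpha|^3\ge 2\alpha^3+6\alpha c^2$ with equality for $|c|\le\alpha$ is correct, and the critical-point equation and the resulting formula for $M$ check out. What your approach buys is conceptual clarity -- it isolates the reason the claim holds, namely that the line $r_1=-1$ sits at distance of order $m$ from the minimizer in the heavily penalized $c$-direction -- at the cost of needing calculus and a final asymptotic comparison; the paper's approach is longer and more opaque but entirely elementary. One caution: the closing step is tighter than ``routine'' suggests. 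With the natural estimates ($|s^*|^2\le 3\alpha\sqrt{m^2+1}/m^2$, $\alpha\le k^2m/3$, $\sqrt{m^2+1}\le\sqrt{2}\,m$) the three correction terms sum to roughly $3.5\,k^3$, while the available gap at the extreme case $k=2$, $m=k^2=4$ is $6\alpha=\tfrac32 k^2m+2m=32$ against $3.5\cdot 8=28$; the inequality does close throughout the range $m\ge k^2\ge 4$, but only because of the $+2m$ term, so the constants must actually be tracked rather than absorbed into an unqualified $O(k^3)$. (The true margin is comfortable -- for $k=2$, $m=4$ one has $M\approx 25$ versus $f(0,0)-2\alpha^3\approx 2.2$ -- it is only the stated bounds that are tight.)
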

\begin{proofofclaim}
  We have that
\begin{align*}
  f(z_1,z_2)
&= \left| \left(\frac{k^2}{4}+\frac{1}{3} \right) m - \frac{(2 z_1 - 1)m - (2 z_2-k)}{2\sqrt{2} \sqrt{m^2+1}} \right|^3 \\
&\,\quad\, + \left| \left(\frac{k^2}{4}+\frac{1}{3} \right) m + \frac{(2 z_1 - 1)m - (2 z_2-k)}{2\sqrt{2} \sqrt{m^2+1}} \right|^3 \\
&\,\quad\,  + \left| \frac{(2 z_2-k)m + (2 z_1-1)}{2 \sqrt{m^2+1}} \right|^3.
\end{align*}
If $\left(\frac{k^2}{4}+\frac{1}{3} \right) m - \frac{(2 z_1 - 1)m - (2 z_2-k)}{2\sqrt{2} \sqrt{m^2+1}} < 0$ or $\left(\frac{k^2}{4}+\frac{1}{3} \right) m + \frac{(2 z_1 - 1)m - (2 z_2-k)}{2\sqrt{2} \sqrt{m^2+1}}<0$, then $f(z_1,z_2) > 8 \left( \frac{k^2}{4} + \frac{1}{3} \right)^3m^3$. Assume in this case for contradiction that $f(z_1,z_2) \leq f(0,0)$. This implies 
$6 \left( \frac{k^2}{4} + \frac{1}{3} \right)^3m^3 < 6 \left( \frac{k^2}{4} + \frac{1}{3} \right) m \left( \frac{m-k}{2\sqrt{2} \sqrt{m^2+1}} \right)^2  + \left(\frac{km+1}{2 \sqrt{m^2+1}} \right)^3$. Dividing by $6 \left( \frac{k^2}{4} + \frac{1}{3} \right) m$ and multiplying by $8 \sqrt{m^2+1}^3$ yields
\begin{align*}
8 \left( \frac{k^2}{4} + \frac{1}{3} \right)^2 m^2 \sqrt{m^2+1}^3 < (m-k)^2\sqrt{m^2+1} + \frac{2 (km+1)^3}{m (3k^2+4)}.
\end{align*}
Using $m<\sqrt{m^2+1}<\sqrt{2}m$ and $2 (km+1) < m (3 k^2+4)$ leads to
$8 \frac{k^4}{16}m^5 < \sqrt{2}m^3 + (km+1)^2 < \sqrt{2}m^3 + 4k^2m^2$.
Hence $16 m^3 \leq k^4 m^3 < 2 \sqrt{2} m + 8 k^2 <
\linebreak[4]
 4 m + 8 m^2$ follows, leading to
$0 > 4 m^2 - 2m-1 = 4 \left( m - \frac{1+\sqrt{5}}{4}\right) \left( m - \frac{1-\sqrt{5}}{4} \right)$, but this is a contradiction since $\frac{1-\sqrt{5}}{4} < \frac{1+\sqrt{5}}{4}<1$ and $m \geq 2$. This shows $f(z_1,z_2) > f(0,0)$ in case that $\left(\frac{k^2}{4}+\frac{1}{3} \right) m - \frac{(2 z_1 - 1)m - (2 z_2-k)}{2\sqrt{2} \sqrt{m^2+1}} < 0$ or 
\linebreak[4]
$\left(\frac{k^2}{4}+\frac{1}{3} \right) m + \frac{(2 z_1 - 1)m - (2 z_2-k)}{2\sqrt{2} \sqrt{m^2+1}}<0$.

Hence it can be assumed in the following that 
\begin{align*}
\begin{split}
f(z_1,z_2) 
&= \left( \left(\frac{k^2}{4}+\frac{1}{3} \right) m - \frac{(2 z_1 - 1)m - (2 z_2-k)}{2\sqrt{2} \sqrt{m^2+1}} \right)^3 \\
&\,\quad\,
 + \left( \left(\frac{k^2}{4}+\frac{1}{3} \right) m + \frac{(2 z_1 - 1)m - (2 z_2-k)}{2\sqrt{2} \sqrt{m^2+1}} \right)^3  \\
&\,\quad\, + \left| \frac{(2 z_2-k)m + (2 z_1-1)}{2 \sqrt{m^2+1}} \right|^3 \\
&=\left| \frac{(2 z_2-k)m + (2 z_1-1)}{2 \sqrt{m^2+1}} \right|^3 + 2m^3 \left(\frac{k^2}{4}+\frac{1}{3} \right)^3 \\
&\,\quad\, + 6m \left(\frac{k^2}{4}+\frac{1}{3} \right) \left( \frac{(2 z_1 - 1)m - (2 z_2-k)}{2\sqrt{2} \sqrt{m^2+1}} \right)^2.
\end{split}
\end{align*}
Thus, $f(z_1,z_2) > f(0,0)$ is equivalent to
\begin{align*}
& 6m \left(\frac{k^2}{4}+\frac{1}{3} \right) \left( \frac{(2 z_1 - 1)m - (2 z_2-k)}{2\sqrt{2} \sqrt{m^2+1}} \right)^2 + \left| \frac{(2 z_2-k)m + (2 z_1-1)}{2 \sqrt{m^2+1}} \right|^3\\
>\,& 6m \left( \frac{k^2}{4} + \frac{1}{3} \right) \left( \frac{m-k}{2\sqrt{2} \sqrt{m^2+1}} \right)^2  + \left(\frac{km+1}{2 \sqrt{m^2+1}} \right)^3
\end{align*}
and consequently to 
\begin{align*}
g(z_1,z_2)  &:= 6m \left( \frac{k^2}{4} + \frac{1}{3} \right) \sqrt{m^2+1} \left( \left( (2 z_1 - 1)m - (2 z_2-k) \right)^2 - (m-k)^2 \right) \\ 
&\,\quad\, + |(2 z_2-k)m + (2 z_1-1)|^3- (km+1)^3 \\
&>0.
\end{align*}
If $-(2z_1-1)m+(2z_2-k) > m-k$ and $-(2z_2-k)m-(2z_1-1) > km+1$, then $g(z_1,z_2)$ is clearly positive and we are done.
If $-(2z_1-1)m+(2z_2-k) \leq m-k$, then $z_1 \leq -1$ implies $2z_2-k \leq -2m-k$ and we get
\begin{align*}
    g(z_1,z_2)
    &\geq - 6 \left( \frac{k^2}{4} + \frac{1}{3} \right) m \sqrt{m^2+1} (m-k)^2 \\
    &\,\quad\, + ( ( 2m+k)m+3)^3 - (km+1)^3 \\
    &\geq -12 \left( \frac{k^2}{4} + \frac{1}{3} \right) m^4 + ( (km+1)+2(m^2+1))^3 \\
    &\,\quad\,-(km+1)^3 \\
    &= -3k^2m^4-4m^4 + 6 (km+1)^2(m^2+1) \\
    &\,\quad\, + 12(km+1)(m^2+1)^2+8(m^2+1)^3\\
    &\geq -3k^2m^4 -4m^4 + 6k^2m^4 + 12 k m^5+8m^6 \\
    &>0.
\end{align*}
Finally, if $-(2z_2-k)m-(2z_1-1) \leq km+1$, then $z_1 \leq -1$ yields $-(2z_2-k) \leq k - \frac{2}{m}<k$. This implies $z_2 \geq 1$ and $2z_2-k \geq 2-k$, which leads to
\begin{align*}
    g(z_1,z_2) 
    &\geq 6 \left( \frac{k^2}{4} + \frac{1}{3} \right) m \sqrt{m^2+1} \left( (3m-k+2)^2 - (m-k)^2\right) \\
    &\,\quad\, - (km+1)^3\\
    &\geq 6 \frac{k^2}{4}m^2 \left( ( (m-k)+2(m+1))^2-(m-k)^2 \right) \\
    &\,\quad\,-(km+1)^3 \\
    &=6 k^2m^2 ( (m-k)(m+1)+(m+1)^2)-(km+1)^3 \\
    &\geq 12k^2m^4 +12 k^2m^3 + 3k^2m^2 - 7k^3m^3 - 3km - 1 \\
    &>0.
\end{align*}
Hence for $z_1 \leq -1$, we have shown that $g(z_1,z_2)$ is positive and that $f(z_1,z_2)>f(0,0)$.
\end{proofofclaim}
This concludes the proof of Theorem~\ref{thm:main}.
\end{proofofmaintheorem}

\section{Structural Properties of Voronoi Cells}
\label{sec:geom}
This section is devoted to show that the (weak) Voronoi-relevant vectors define the Voronoi cell of a lattice and to study how these vectors correspond to the facets of Voronoi cells.
In Subsection~\ref{ssec:genNorms} we discuss these questions for arbitrary norms and in Subsection~\ref{ssec:strNorms} for strictly convex norms.
Finally, we investigate the number of (weak) Voronoi-relevant vectors in two-dimensional lattices in Subsection~\ref{ssec:2Dim}.
First, we introduce some preliminary results.

For a given norm $\Vert \cdot \Vert$, the closed unit ball $\lbrace x \in \mathbb{R}^n \mid \Vert x \Vert \leq 1 \rbrace$ is a \emph{convex body}, i.e., a compact and convex subset of $\mathbb{R}^n$ with $0$ in its interior.
A convex body $K$ is called \emph{strictly convex} if for every $x,y \in K$ with $x \neq y$ and $\tau \in (0,1)$ it holds that $\tau x + (1-\tau)y$ lies in the interior of $K$.
A norm is strictly convex if and only if its closed unit ball is strictly convex. 
Every boundary point $p$ of a convex body $K$ has a supporting hyperplane, i.e., a hyperplane $H$ going through $p$ such that $K$ is contained in one of the two closed halfspaces bounded by $H$. A convex body is called \emph{smooth} if each point on its boundary has a unique supporting hyperplane. A norm is said to be \emph{smooth} if its closed unit ball is smooth, although such a norm is generally not smooth as a function.
For $p \in \mathbb{R}_{>1}$, the $\ell_p$-norms are examples of strictly convex and smooth norms. The $\ell_1$-norm and the $\ell_\infty$-norm have neither of the two properties.
Throughout this section, we use properties of the \emph{convex dual} of a given convex body $K$. This is defined as $K^\circ := \lbrace y \in \mathbb{R}^n \mid \forall x \in K: \langle x,y \rangle \leq 1 \rbrace$, where $\langle \cdot, \cdot \rangle$ denotes the Euclidean inner product on $\mathbb{R}^n$. 
The \emph{support function} of a convex body $K$ is defined by $h_K(u) := \sup \lbrace \langle x,u \rangle \mid x \in K \rbrace$ for $u \in \mathbb{R}^n$. The connection between these notions is shown in the following:
\begin{proposition}
\label{prop:duality}
For a convex body $K \subseteq \mathbb{R}^n$, the following assertions hold:
\begin{enumerate}
\item $K^\circ$ is a convex body with $(K^\circ)^\circ = K$.
\item If $K$ is the closed unit ball of a norm $\Vert \cdot \Vert$, then it holds for every $x \in \mathbb{R}^n$ that $\Vert x \Vert = h_{K^\circ}(x)$.
\item $K$ is strictly convex if and only if $K^\circ$ is smooth.
\end{enumerate}
\end{proposition}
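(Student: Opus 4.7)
The plan is to prove the three assertions in order, relying on standard convex-geometric tools (hyperplane separation, the bipolar theorem, and the face-duality between $K$ and $K^\circ$).

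For assertion~(1), I would first observe that $K^\circ$ is convex and closed since it is the intersection of the closed half-spaces $\lbrace y \in \mathbb{R}^n \mid \langle x,y \rangle \leq 1 \rbrace$ indexed by $x \in K$. Boundedness follows from $0 \in \operatorname{int}(K)$: choosing $r>0$ with $r \mathcal{B}_{\Vert \cdot \Vert_2,1}(0) \subseteq K$ and plugging $x = r y / \Vert y \Vert_2 \in K$ into the defining inequality of $K^\circ$ gives $\Vert y \Vert_2 \leq 1/r$. Dually, compactness of $K$ gives $R>0$ with $K \subseteq R \mathcal{B}_{\Vert \cdot \Vert_2,1}(0)$; by Cauchy--Schwarz, any $y$ with $\Vert y \Vert_2 \leq 1/R$ satisfies $\langle x,y \rangle \leq 1$ for all $x \in K$, so $0 \in \operatorname{int}(K^\circ)$. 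The identity $(K^\circ)^\circ = K$ is the bipolar theorem: the inclusion $K \subseteq (K^\circ)^\circ$ is immediate from the definitions, and the reverse inclusion is proved by contraposition. Given $x \notin K$, the hyperplane separation theorem (applied to the compact convex $K$ and the point $x$) yields $y \in \mathbb{R}^n \setminus \lbrace 0 \rbrace$ and $c \in \mathbb{R}$ with $\langle z,y \rangle \leq c < \langle x,y \rangle$ for all $z \in K$; since $0 \in \operatorname{int}(K)$ we have $c > 0$, so rescaling $y$ by $1/c$ gives $y \in K^\circ$ with $\langle x,y \rangle > 1$, witnessing $x \notin (K^\circ)^\circ$.

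For assertion~(2), the key observation is the chain of equivalences $\Vert x \Vert \leq 1 \Leftrightarrow x \in K \Leftrightarrow x \in (K^\circ)^\circ \Leftrightarrow \forall y \in K^\circ \colon \langle x,y \rangle \leq 1 \Leftrightarrow h_{K^\circ}(x) \leq 1$, using assertion~(1) in the middle step. Since both $\Vert \cdot \Vert$ and $h_{K^\circ}$ are positively homogeneous functions on $\mathbb{R}^n$, equality of their unit sublevel sets forces them to agree pointwise.

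For assertion~(3), I would argue the contrapositive in one direction and then invoke the involution $(K^\circ)^\circ = K$ for the other. Suppose $K$ is not strictly convex: there exist distinct $x_1,x_2 \in \partial K$ and $\tau \in (0,1)$ with $p := \tau x_1 + (1-\tau) x_2 \in \partial K$. Let $\langle \cdot, y \rangle = c$ be a supporting hyperplane of $K$ at $p$, with $K$ contained in the half-space $\langle \cdot, y \rangle \leq c$. Since $p$ is a strict convex combination of $x_1,x_2 \in K$ lying on this hyperplane, both $x_1$ and $x_2$ must lie on it as well. Since $0 \in \operatorname{int}(K)$ we have $c > 0$, and rescaling so $c=1$ yields $y \in \partial K^\circ$ with $\langle x_1, y \rangle = \langle x_2, y \rangle = 1$. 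The affine hyperplanes $H_i := \lbrace y' \mid \langle x_i, y' \rangle = 1 \rbrace$ then both pass through $y$, and both support $K^\circ$ at $y$ (since $x_i \in K$ forces $\langle x_i, y' \rangle \leq 1$ for all $y' \in K^\circ$); they are distinct because $x_1 \neq x_2$. Hence $K^\circ$ is not smooth. The converse implication follows by applying exactly the same argument with the roles of $K$ and $K^\circ$ interchanged, using assertion~(1).

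The only delicate point is the identification of supporting hyperplanes of $K^\circ$ in assertion~(3); everything else reduces to standard separation arguments. I would expect no substantial obstacle, since the proposition is essentially a textbook statement of polar duality adapted to the unit balls of norms.
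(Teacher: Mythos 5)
Your treatment of assertions (1) and (2) is correct and in fact more self-contained than the paper's, which simply cites Theorem 14.5 and Corollary 14.5.1 of Rockafellar for these two facts; your separation/bipolar argument and the sublevel-set-plus-homogeneity argument are the standard proofs. The forward half of your argument for assertion (3) --- passing from two distinct boundary points of $K$ with a common supporting hyperplane to a boundary point of $K^\circ$ with two distinct supporting hyperplanes --- also matches the paper's chain of equivalences.

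The gap is in how you close assertion (3). You prove the implication ``$K$ not strictly convex $\Rightarrow$ $K^\circ$ not smooth'' and claim the converse follows by applying the same argument with the roles of $K$ and $K^\circ$ interchanged. But that role-swap yields ``$K^\circ$ not strictly convex $\Rightarrow$ $(K^\circ)^\circ = K$ not smooth'', i.e.\ ``$K$ smooth $\Rightarrow$ $K^\circ$ strictly convex''. Neither this nor your proved implication gives the direction you still need, namely ``$K$ strictly convex $\Rightarrow$ $K^\circ$ smooth'' (equivalently ``$K^\circ$ not smooth $\Rightarrow$ $K$ not strictly convex''); a priori your two statements are consistent with a strictly convex, non-smooth $K$ whose polar is neither strictly convex nor smooth. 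What is needed is to \emph{reverse} your argument rather than dualize it: given $a \in \partial K^\circ$ with two distinct supporting hyperplanes $\lbrace y \mid \langle b_i, y \rangle = 1 \rbrace$, $i \in \lbrace 1,2 \rbrace$, note that $b_1, b_2 \in (K^\circ)^\circ = K$, that $\langle a, b_i \rangle = 1$ forces $b_1, b_2 \in \partial K$, and that $\lbrace x \mid \langle x, a \rangle = 1 \rbrace$ is a common supporting hyperplane of $K$ at both points, so the segment $[b_1,b_2] \subseteq K$ lies on a supporting hyperplane and $K$ is not strictly convex. This is exactly why the paper phrases its proof of the third assertion as a chain of two-way equivalences rather than a single implication plus symmetry. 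With that one repair your proof is complete.
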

\begin{proof}
  The first two assertions can be found in Theorem 14.5 and Corollary 14.5.1 of~\cite{rockafellar}. For the third assertion note first that $K$ is strictly convex if and only if supporting hyperplanes at distinct boundary points of $K$ are distinct. Hence the following are equivalent:
  \begin{itemize}
    \item $K$ is not strictly convex.
    \item There are $b_1, b_2 \in \partial K, b_1 \neq b_2$, with a common supporting hyperplane $\lbrace x \in \mathbb{R}^n \mid \langle x,a \rangle = 1 \rbrace$ such that $\langle x,a \rangle \leq 1$ for all $x \in K$. Note that $a \in \partial K^\circ$.
    \item There is $a \in \partial K^\circ$ with two distinct supporting hyperplanes $\lbrace y \in \mathbb{R}^n \mid \langle y,b_1 \rangle = 1 \rbrace$ and $\lbrace y \in \mathbb{R}^n \mid \langle y,b_2 \rangle = 1 \rbrace$ such that $\langle y,b_i \rangle \leq 1$ for all $y \in K^\circ$ and $i \in \lbrace 1,2 \rbrace$. Note that $b_1, b_2 \in \partial K$.
    \item $K^\circ$ is not smooth.
  \end{itemize}
\end{proof} 
In our geometric analysis of Voronoi cells, we will in particular study bisectors and their corresponding strict and non-strict \emph{halfspaces}: $\mathcal{H}_{\Vert \cdot \Vert}^<(a,b) := \lbrace x \in \mathbb{R}^n \mid \Vert x-a \Vert < \Vert x-b \Vert \rbrace$ and $\mathcal{H}_{\Vert \cdot \Vert}^\leq(a,b) := \mathcal{H}_{\Vert \cdot \Vert}^<(a,b) \cup \mathcal{H}_{\Vert \cdot \Vert}^=(a,b)$.
The following lemma implies that the Voronoi cell $\mathcal{V}(\Lambda, \Vert \cdot \Vert)$ and its variants $\tilde{\mathcal{V}}$ and $\tilde{\mathcal{V}}^{(g)}$ (see Theorems~\ref{thm:Voronoi-relevant_vectors_define_Voronoi-cell} and~\ref{thm:generalized_Voronoi-relevant_vectors_define_Voronoi-cell}) are star-shaped with the origin as center.
Recall that a subset $S$ of $\mathbb{R}^n$ is \emph{star-shaped} with center $c \in S$ if for all $x \in S$ the line segment from $c$ to $x$ is in $S$. 

\begin{lemma}
\label{lem:star}
Let $\Vert \cdot \Vert: \mathbb{R}^n \to \mathbb{R}_{\geq 0}$ be a norm and let $v \in \mathbb{R}^n \setminus \lbrace 0 \rbrace$.
\begin{enumerate}
\item The halfspaces $\mathcal{H}^{\leq}_{\Vert \cdot \Vert}(0,v)$ and $\mathcal{H}^{<}_{\Vert \cdot \Vert}(0,v)$ are star-shaped with center $0$.
\item Moreover, if $\Vert \cdot \Vert$ is strictly convex and $x \in \mathcal{H}^{=}_{\Vert \cdot \Vert}(0,v)$, then
\begin{align*}
&\tau \Vert x \Vert < \Vert \tau x - v \Vert \text{ for } \tau \in (0,1) \text{, and}\\
&\tau \Vert x \Vert > \Vert \tau x - v \Vert \text{ for } \tau > 1.
\end{align*}
\end{enumerate}
\end{lemma}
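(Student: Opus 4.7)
The plan is to attack each assertion by selecting a vector decomposition that puts the triangle inequality on the useful side, and to upgrade to strict inequality via strict convexity where needed. I will use throughout the following standard fact: under a strictly convex norm, $\Vert a + b \Vert = \Vert a \Vert + \Vert b \Vert$ forces $a$ and $b$ to be non-negative scalar multiples of one another (or one of them to vanish). This can be seen by applying the definition of strict convexity to the unit vectors $a/\Vert a \Vert$ and $b/\Vert b \Vert$.

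For Part 1, I would start from the identity $v - x = (v - \tau x) - (1 - \tau) x$, valid for every $\tau$. The triangle inequality gives $\Vert v - \tau x \Vert \geq \Vert v - x \Vert - (1 - \tau) \Vert x \Vert$ whenever $\tau \in [0, 1]$, and the hypothesis $\Vert x \Vert \leq \Vert x - v \Vert$ bounds the right-hand side below by $\tau \Vert x \Vert = \Vert \tau x \Vert$, showing $\tau x \in \mathcal{H}^{\leq}_{\Vert \cdot \Vert}(0, v)$. For the strict halfspace the hypothesis $\Vert x \Vert < \Vert x - v \Vert$ transports directly to a strict conclusion, and the endpoint $\tau = 0$ is handled by $\Vert v \Vert > 0$.

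For Part 2, I would play the same game, now relying on the \emph{strict} triangle inequality. For (a), decomposing $x - v = (\tau x - v) + (1 - \tau) x$ produces $\Vert x - v \Vert \leq \Vert \tau x - v \Vert + (1 - \tau) \Vert x \Vert$; if this is strict, the hypothesis $\Vert x \Vert = \Vert x - v \Vert$ rearranges to $\tau \Vert x \Vert < \Vert \tau x - v \Vert$. For (b), the analogous decomposition $\tau x - v = (\tau - 1) x + (x - v)$ yields $\Vert \tau x - v \Vert \leq (\tau - 1) \Vert x \Vert + \Vert x \Vert = \tau \Vert x \Vert$, which again needs to be strict.

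The main obstacle is confirming strictness of these triangle inequalities. In each decomposition the two summands are non-zero thanks to $\tau \notin \{0, 1\}$, $v \neq 0$, and $\Vert x \Vert > 0$, so equality would force them to be positive scalar multiples, which quickly forces $v \in \mathbb{R} x$. Combined with $\Vert x \Vert = \Vert x - v \Vert$, the only non-trivial possibility is $v = 2 x$. A direct calculation, either of the scalar multiple (for (a), $\mu = (\tau - 2)/(1 - \tau) < 0$ for $\tau \in (0,1)$, and analogously $\mu = 1/(1-\tau) < 0$ for (b) when $\tau > 1$) or simply of $\Vert \tau x - v \Vert = |\tau - 2|\,\Vert x \Vert$, shows the strict inequality holds in this remaining case as well.
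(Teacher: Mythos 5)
Your proof is correct, but it takes a genuinely different route from the paper's. For Part 1 the paper writes $\tau\Vert x-v\Vert = \Vert \tau(\tau x-v)+(1-\tau)\tau x\Vert$ and applies the triangle inequality to that convex combination, whereas you use the reverse triangle inequality on $v-x=(v-\tau x)-(1-\tau)x$; both are elementary and equivalent in substance. The real divergence is in Part 2. The paper exhibits $\tau(x-v)$ (resp.\ $\tau x-v$) as a \emph{strict convex combination of two distinct points} and invokes strict convexity in the form ``the norm of a strict convex combination of distinct points is strictly below the maximum of their norms''; because the two points being combined ($\tau x-v$ and $\tau x$, resp.\ $\tau x$ and $\tau(x-v)$) differ by $v\neq 0$, no degenerate case ever arises and the argument is a one-liner. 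You instead use the ordinary triangle inequality and then characterize its equality case (parallel summands with positive ratio), which forces you into the extra case analysis $v\in\mathbb{R}x$, hence $v=2x$, disposed of by direct computation. That case analysis is the price of your decomposition, and one small justification in it is misattributed: the non-vanishing of the summand $\tau x-v$ in case (a) does not follow from $\tau\notin\{0,1\}$, $v\neq 0$, $\Vert x\Vert>0$ alone, but from the hypothesis $\Vert x\Vert=\Vert x-v\Vert$ (if $v=\tau x$ then $\Vert x-v\Vert=(1-\tau)\Vert x\Vert<\Vert x\Vert$); equivalently, that case is vacuous because the triangle inequality would then give $\Vert x\Vert\leq(1-\tau)\Vert x\Vert$. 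This is cosmetic and does not affect correctness. What your approach buys is self-containedness from the standard equality-case lemma; what the paper's buys is the absence of any degenerate case to check.
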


\begin{proof}
  \begin{enumerate}
    \item For $x \in \mathcal{H}^{\leq}_{\Vert \cdot \Vert}(0,v)$ and $\tau \in (0,1)$, we want to show that $\tau x \in \mathcal{H}^{\leq}_{\Vert \cdot \Vert}(0,v)$. This follows directly from
      \begin{align}
        \label{eq:star}
        \begin{split}
        \tau \Vert x \Vert &\leq \tau \Vert x-v \Vert
        = \Vert \tau (\tau x -v) + (1-\tau)\tau x \Vert\\
        &\leq \tau \Vert \tau x -v \Vert + (1-\tau) \tau \Vert x \Vert.
        \end{split}
      \end{align}
      If $x \in \mathcal{H}^{<}_{\Vert \cdot \Vert}(0,v)$, the first inequality in~\eqref{eq:star} becomes strict and $\tau x \in \mathcal{H}^{<}_{\Vert \cdot \Vert}(0,v)$.
    \item By strict convexity we have for $\tau \in (0,1)$ that
      \begin{align*}
        \tau \Vert x \Vert &= \tau \Vert x-v \Vert
        = \Vert \tau (\tau x -v) + (1-\tau)\tau x \Vert\\
        &< \max \lbrace \Vert \tau x -v \Vert, \tau \Vert x \Vert \rbrace
        = \Vert \tau x -v \Vert,
      \end{align*}
      and for $\tau >1$ that $\Vert \tau x -v \Vert = \Vert (1-\frac{1}{\tau}) \tau x + \frac{1}{\tau} \tau (x-v) \Vert < \tau \Vert x \Vert$.
  \end{enumerate}
\end{proof}

\subsection{General Norms}
\label{ssec:genNorms}
We will see in Subsection~\ref{ssec:strNorms} that strictly convex norms behave like the Euclidean norm in the two-dimensional setting (Theorems~\ref{thm:Voronoi-relevant_vectors_define_Voronoi-cell} and~\ref{thm:bijection_facets_Voronoi-relevant}). This is not true for other norms.
For non-strictly convex norms, the Voronoi-relevant vectors are in general not sufficient to determine the Voronoi cell of a two-dimensional lattice completely.
To see this, consider the lattice $\mathcal{L}_3$ from the proof of Proposition~\ref{prop:2dimensions_1norm} (i.e., the lattice $\mathcal{L}(b_1,3b_2)$ for $b_1 := (1,1)^T$ and $b_2 := (0,1)^T$) together with the $\ell_1$-norm $\Vert \cdot \Vert_1$. The Voronoi cell $\mathcal{V} (\mathcal{L}(b_1,3b_2),\Vert \cdot \Vert_1)$ is depicted in Figure~\ref{fig:Voronoi_cell_1norm}, and the only Voronoi-relevant vectors are $\pm b_1$. This shows that Theorem~\ref{thm:Voronoi-relevant_vectors_define_Voronoi-cell} does not hold for general non-strictly convex norms because~-- for example~-- we have that $ \frac{7}{8}(1,-1)^T \notin \mathcal{V} (\mathcal{L}(b_1,3b_2),\Vert \cdot \Vert_1)$ is closer to $0$ than to both Voronoi-relevant vectors with respect to the $\ell_1$-norm. Therefore we need a larger set of vectors for a description of the Voronoi cell.
The weak Voronoi-relevant vectors give such a description, for every norm and every lattice dimension.
\begin{figure}[tbhp]
  \centering
  \includegraphics[scale=0.5]{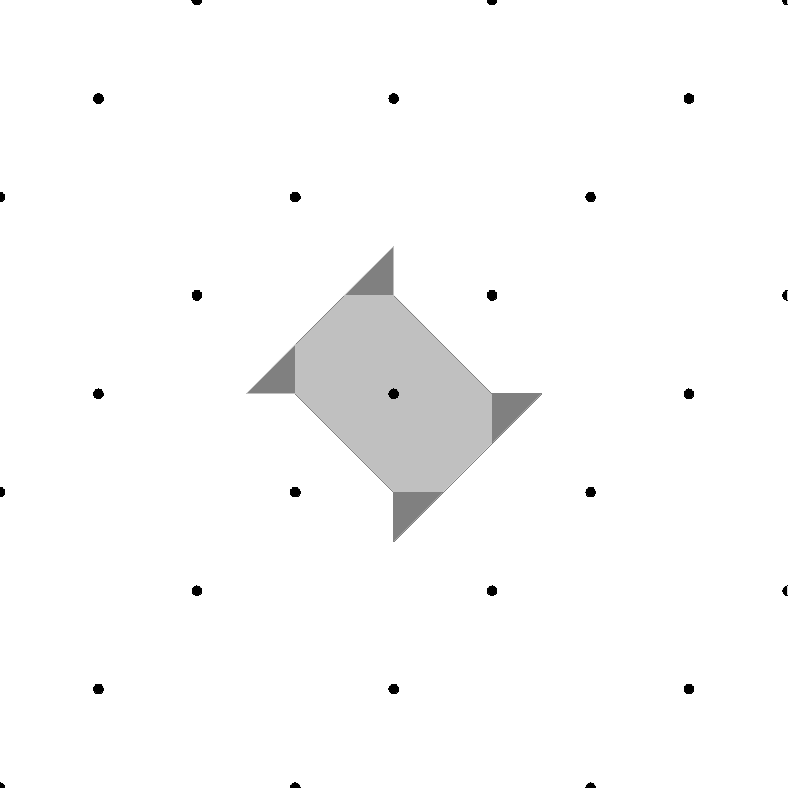}
  \caption{$\mathcal{V} (\mathcal{L}(b_1,3b_2),\Vert \cdot \Vert_1)$: Points that are strictly closer to $0$ than to any other lattice vector are light gray, and points which have the same distance to some other lattice vector are dark gray.}
  \label{fig:Voronoi_cell_1norm}
\end{figure}
\begin{proofofGVRdefineVC}
  It is clear that $\mathcal{V}(\Lambda, \Vert \cdot \Vert) \subseteq \tilde{\mathcal{V}}^{(g)}(\Lambda, \Vert \cdot \Vert)$.
  For $x \in \mathbb{R}^n \setminus \mathcal{V}(\Lambda, \Vert \cdot \Vert)$, we show in the following that $x \notin \tilde{\mathcal{V}}^{(g)}(\Lambda, \Vert \cdot \Vert)$.
  Since $\Lambda$ is discrete, we have $\lbrace u \in \Lambda \mid \Vert x-u \Vert < \Vert x \Vert \rbrace = \lbrace u_1, \ldots, u_k \rbrace$ for some $k \in \mathbb{N}$. Note that this set is not empty due to $x \notin \mathcal{V}(\Lambda, \Vert \cdot \Vert)$.
  By the continuity of $y \mapsto \Vert y \Vert$ and the intermediate value theorem, we find for every $1 \leq i \leq k$ some $\tau_i \in (0,1)$ with $\Vert \tau_i x - u_i \Vert = \tau_i \Vert x \Vert$.
  Now we choose $1 \leq j \leq k$ such that $\tau_j = \min \lbrace \tau_1, \ldots, \tau_k \rbrace$.
  The first part of Lemma~\ref{lem:star} implies $\tau_j x \in \mathcal{V}(\Lambda, \Vert \cdot \Vert)$.
  This shows that $u_j$ is a weak Voronoi-relevant vector, and from $\Vert x - u_j \Vert < \Vert x \Vert$ we get $x \notin \tilde{\mathcal{V}}^{(g)}(\Lambda, \Vert \cdot \Vert)$.
\end{proofofGVRdefineVC}
Theorem~\ref{thm:bijection_facets_Voronoi-relevant} is not true for non-strictly convex norms, not even in the two-dimensional case. To see this, consider the same lattice $\mathcal{L}(b_1,3b_2)$ together with the $\ell_1$-norm as in Figure~\ref{fig:Voronoi_cell_1norm}. Two facets of $\mathcal{V}(\mathcal{L}(b_1,3b_2),\Vert \cdot \Vert_1)$ are shown in Figure~\ref{fig:facets}, but only the facet in Figure~\ref{sfig:facet1} is of the form as in Theorem~\ref{thm:bijection_facets_Voronoi-relevant}.
Figures~\ref{sfig:facet3} and~\ref{sfig:facet4} show the reason for the third condition in Definition~\ref{defn:facet}.
\begin{figure}[tbhp]
  \centering
  \subfloat[]{\label{sfig:facet1}\includegraphics[width=0.24\textwidth]{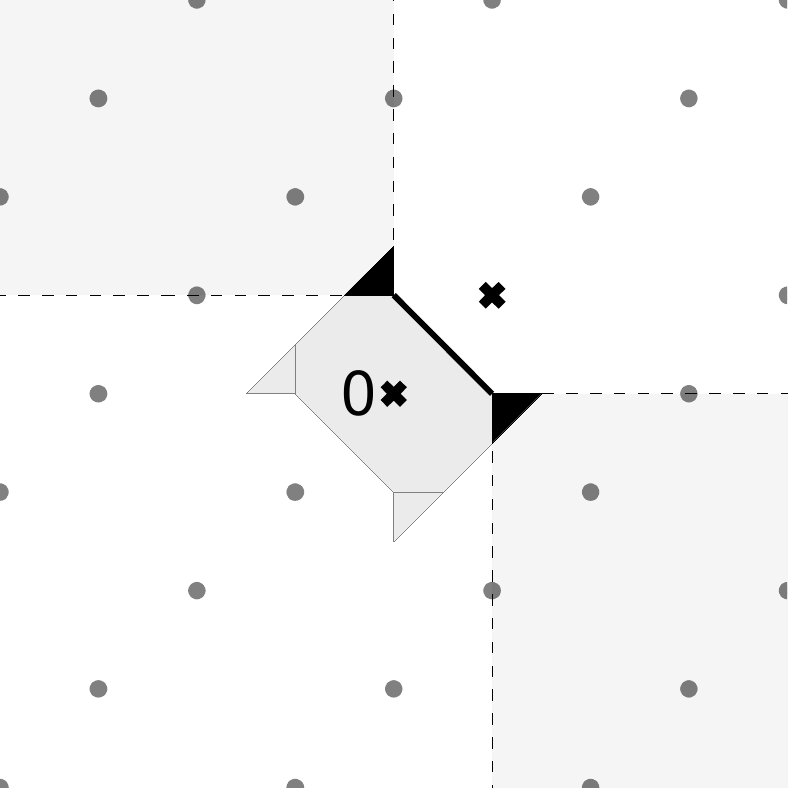}}~
  \subfloat[]{\label{sfig:facet2}\includegraphics[width=0.24\textwidth]{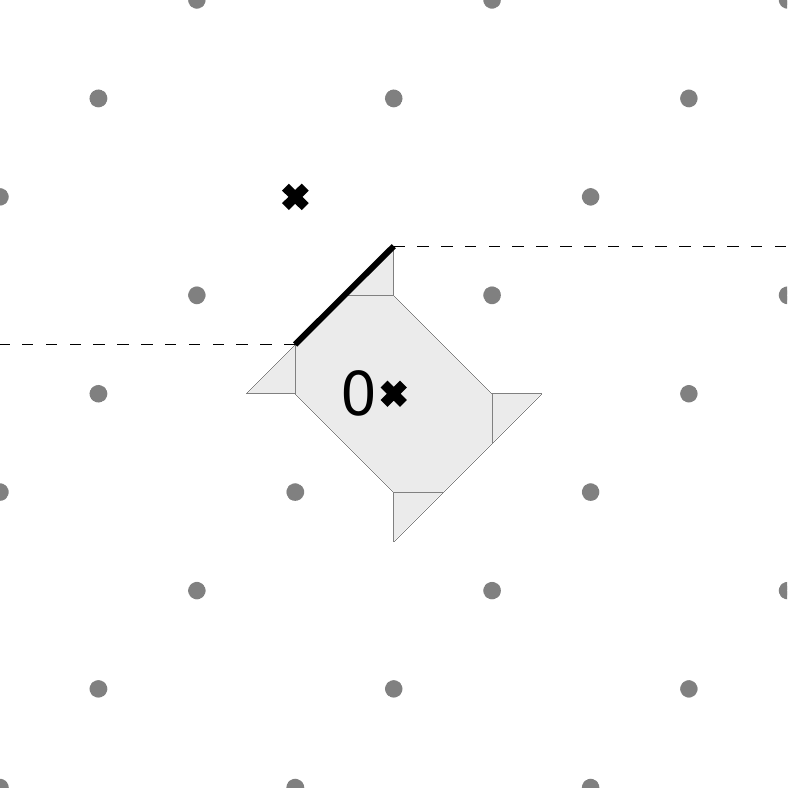}}~
  \subfloat[]{\label{sfig:facet3}\includegraphics[width=0.24\textwidth]{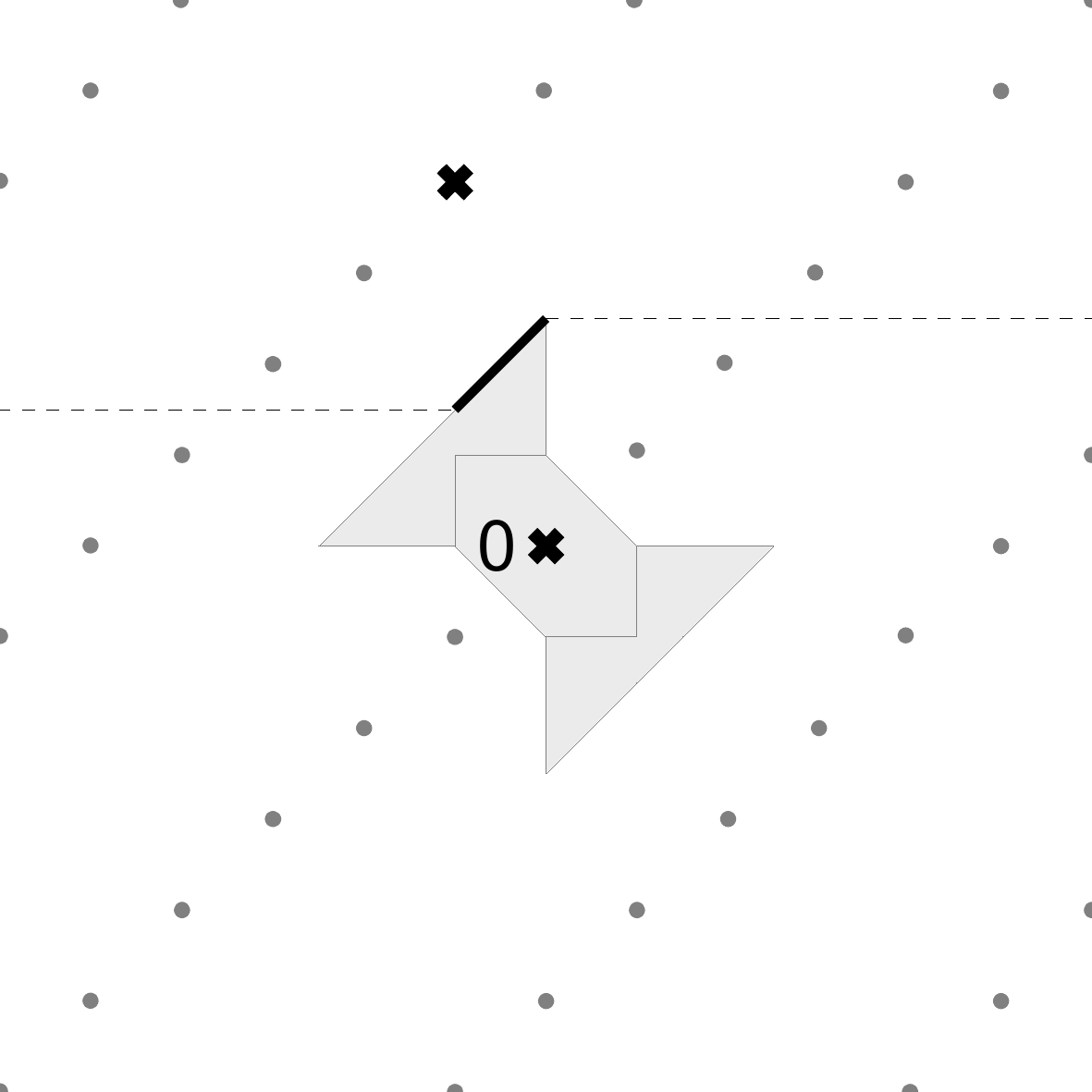}}~
  \subfloat[]{\label{sfig:facet4}\includegraphics[width=0.24\textwidth]{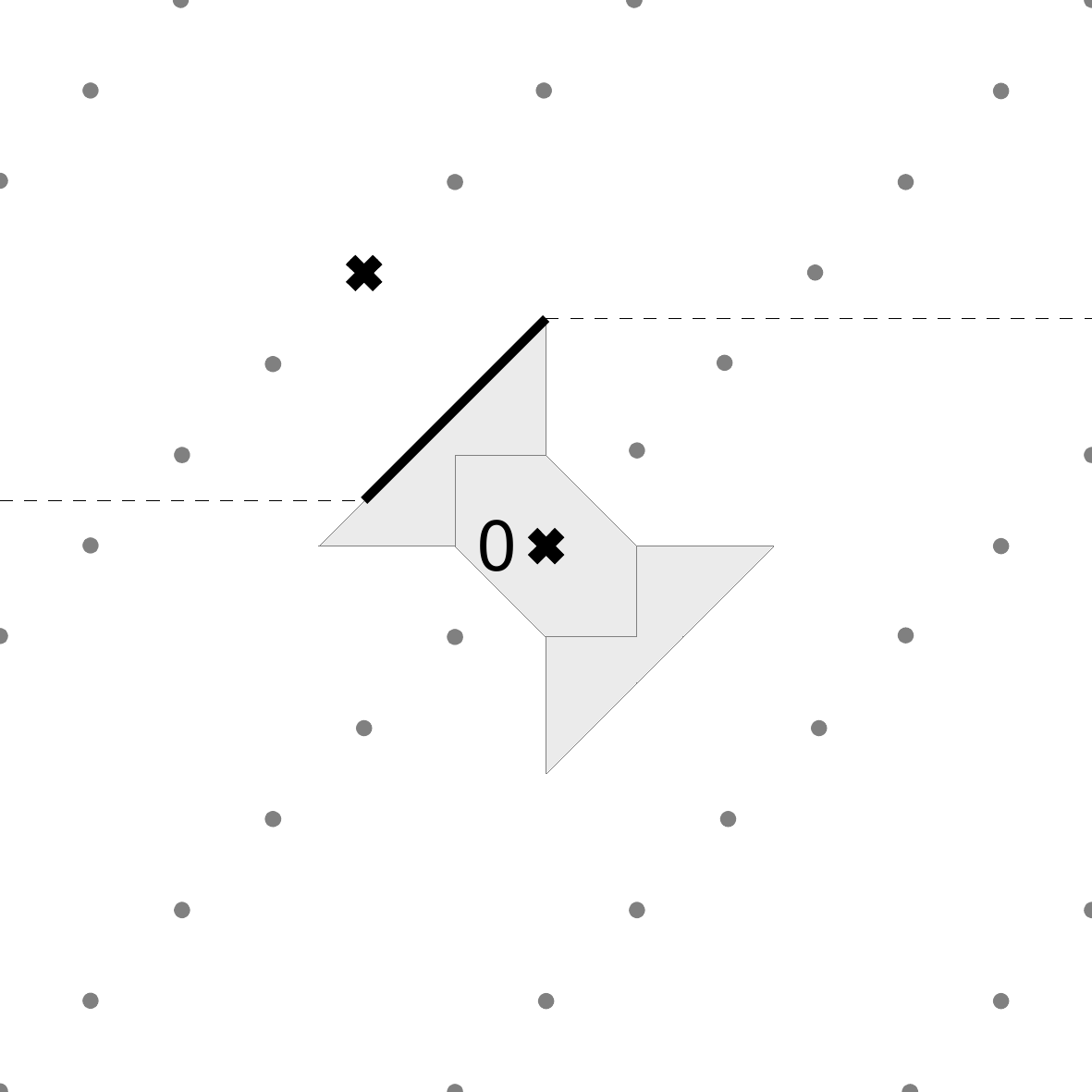}}
  \caption{Facets of $\mathcal{V} (\mathcal{L}(b_1,m b_2),\Vert \cdot \Vert_1)$ (for $m \in \lbrace 3,5 \rbrace$) and lattice vectors inducing them are black. Dashed lines indicate bisectors which contain facets. Figure~\ref{sfig:facet3} actually shows a non-facet contained in the facet in Figure~\ref{sfig:facet4}.}
\label{fig:facets}
\end{figure}

At least we can show Proposition~\ref{prop:Voronoi_relevant_has_facet}, which states for all norms that every Voronoi-relevant vector induces a facet of the Voronoi cell which is of the form as in Theorem~\ref{thm:bijection_facets_Voronoi-relevant}.
This result will also be important for the proof of Theorem~\ref{thm:bijection_facets_Voronoi-relevant} itself.

\begin{proofofVRinduceFacets}
  Let $v \in \Lambda$ be Voronoi-relevant.
  First, we verify that $\mathcal{F}:=\mathcal{V}(\Lambda, \Vert \cdot \Vert) \cap \mathcal{H}_{\Vert \cdot \Vert}^{=}(0,v)$ satisfies the third condition of Definition~\ref{defn:facet}.
  Since $v$ is Voronoi-relevant, there is some $x \in \mathbb{R}^n$ such that $\Vert x \Vert = \Vert x-v \Vert < \Vert x-w \Vert$ for every $w \in \Lambda \setminus \lbrace 0,v \rbrace$. Hence, $x \in \mathcal{F}$ follows, and for all $w \in \Lambda \setminus \lbrace 0,v \rbrace$ we have $x \notin \mathcal{H}_{\Vert \cdot \Vert}^{=}(0,w)$ and thus $\mathcal{F} \nsubseteq \mathcal{H}_{\Vert \cdot \Vert}^{=}(0,w)$. 

Secondly, we show that $\mathcal{F}$ satisfies the second condition of Definition~\ref{defn:facet}.
  Because $\Lambda$ is discrete, there is some $\varepsilon \in \mathbb{R}_{>0}$ such that $\mathcal{B}_{\Vert \cdot \Vert, \Vert x \Vert + \varepsilon}(x) \cap \Lambda = \lbrace 0,v \rbrace$. The continuity of $\Vert \cdot \Vert$ with respect to the Euclidean norm yields $\delta_1, \delta_2 \in \mathbb{R}_{>0}$ such that $|\Vert x \Vert-\Vert y_1 \Vert| < \frac{\varepsilon}{2}$ holds for every $y_1 \in \mathcal{B}_{\Vert \cdot \Vert_2, \delta_1}(x)$ and $\Vert y_2-x \Vert < \frac{\varepsilon}{2}$ holds for every $y_2 \in \mathcal{B}_{\Vert \cdot \Vert_2, \delta_2}(x)$. Define $\delta := \min \lbrace \delta_1, \delta_2 \rbrace$ and let $y \in \mathcal{B}_{\Vert \cdot \Vert_2, \delta}(x) \cap \mathcal{H}_{\Vert \cdot \Vert}^{=}(0,v)$. It follows that $\overline{\mathcal{B}}_{\Vert \cdot \Vert, \Vert y \Vert}(y) \subseteq \mathcal{B}_{\Vert \cdot \Vert, \Vert x \Vert + \varepsilon}(x)$, which leads to $\overline{\mathcal{B}}_{\Vert \cdot \Vert, \Vert y \Vert}(y) \cap \Lambda = \lbrace 0,v \rbrace$. Thus, we have $\Vert y \Vert = \Vert y-v \Vert < \Vert y-w \Vert$ for all $w \in \Lambda \setminus \lbrace 0,v \rbrace$, and $y \in \mathcal{F}$.
\end{proofofVRinduceFacets}

\subsection{Strictly Convex Norms}
\label{ssec:strNorms}
In the following, we only consider strictly convex norms.
We show that the Voronoi-relevant vectors define the Voronoi cell and that the Voronoi-relevant vectors are in bijection with the facets of the Voronoi cell, if the lattice has dimension two or the norm is smooth.
For this, we need to understand bisectors and their intersections. These were for example studied in \cite{bisector_hyperplane, dissertation}.
\begin{proposition}[\cite{bisector_hyperplane}, Theorem 2, and \cite{dissertation}, Theorem 2.1.2.3]
\label{prop:trisector_2dimensions}
$ $
\begin{enumerate}
\item Let $\Vert \cdot \Vert: \mathbb{R}^n \to \mathbb{R}_{\geq 0}$ be a strictly convex norm. For distinct $a,b \in \mathbb{R}^n$, the bisector $\mathcal{H}_{\Vert \cdot \Vert}^{=}(a,b)$ is homeomorphic to a hyperplane.
\item Let $\Vert \cdot \Vert: \mathbb{R}^2 \to \mathbb{R}_{\geq 0}$ be any norm, and let $a,b,c \in \mathbb{R}^2$ be pairwise distinct such that each of the bisectors $\mathcal{H}_{\Vert \cdot \Vert}^{=}(a,b)$, $\mathcal{H}_{\Vert \cdot \Vert}^{=}(a,c)$ and $\mathcal{H}_{\Vert \cdot \Vert}^{=}(b,c)$ is homeomorphic to a line. Then $\mathcal{H}_{\Vert \cdot \Vert}^{=}(a,b) \cap \mathcal{H}_{\Vert \cdot \Vert}^{=}(b,c)$ is either empty or a single point.
\end{enumerate} 
\end{proposition}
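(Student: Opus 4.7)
The plan is to handle the two parts separately.

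For part~1, translate so that $a = 0$ and $b = v$, and aim to show that the Euclidean-orthogonal projection $\pi \colon \mathcal{H}_{\Vert \cdot \Vert}^{=}(0,v) \to v^{\perp}$ is a homeomorphism onto $v^{\perp} \cong \mathbb{R}^{n-1}$. Fix $y \in v^{\perp}$ and set $\psi_y(t) := \Vert y + t v \Vert$ and $f_y(t) := \psi_y(t) - \psi_y(t-1)$, so that $y + t v$ belongs to the bisector iff $f_y(t) = 0$. The key observation is that strict convexity of $\Vert \cdot \Vert$ forces $\psi_y$ to be strictly convex whenever $y \notin \mathbb{R} v$, which for $y \in v^{\perp}$ excludes only $y = 0$ (a case handled by hand through $v/2$). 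A strictly convex $\psi_y$ has a strictly increasing right-derivative, from which one obtains that the sliding difference $f_y$ is strictly increasing. Combined with the asymptotics $\psi_y(t)/t \to \pm \Vert v \Vert$ as $t \to \pm \infty$ (hence $f_y(t) \to \pm \Vert v \Vert$), the intermediate value theorem produces a unique zero $\tau(y)$. Thus $\pi$ restricted to the bisector is a continuous bijection; continuity of $\pi^{-1}$ follows from a standard squeezing argument: any limit point of $\tau(y_n)$ as $y_n \to y$ must equal $\tau(y)$ by continuity of $f$ and strict monotonicity in $t$.

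For part~2, I proceed by contradiction and assume $x_1 \neq x_2$ both lie in $\mathcal{H}_{\Vert \cdot \Vert}^{=}(a,b) \cap \mathcal{H}_{\Vert \cdot \Vert}^{=}(b,c)$. Then they are equidistant from all three points, so $\mathcal{H}_{\Vert \cdot \Vert}^{=}(a,c)$ also contains both of them, and all three bisectors pass through $x_1$ and $x_2$. Since each bisector is homeomorphic to a line, it contains a unique sub-arc $\gamma_{ab}, \gamma_{bc}, \gamma_{ac}$ connecting $x_1$ to $x_2$, and it separates $\mathbb{R}^2$ into two connected components, one containing each of its defining points (by Jordan--Schoenflies in the plane). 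In the non-degenerate case $\gamma_{ab} \cap \gamma_{bc} = \{x_1, x_2\}$, the closed curve $J := \gamma_{ab} \cup \gamma_{bc}$ is a Jordan curve bounding two regions. A careful local analysis at $x_1$ shows that the ``$b$-side'' of $\mathcal{H}_{\Vert \cdot \Vert}^{=}(a,b)$ adjacent to $\gamma_{ab}$ and the ``$b$-side'' of $\mathcal{H}_{\Vert \cdot \Vert}^{=}(b,c)$ adjacent to $\gamma_{bc}$ lie in opposite complementary regions of $J$, whereas $b$ must lie in both; this contradiction proves the claim. Degenerate configurations in which $\gamma_{ab}$ and $\gamma_{bc}$ share an interior sub-arc can be reduced to the non-degenerate case by replacing one of the overlapping arcs with $\gamma_{ac}$, using that not all three arcs can coincide.

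The main obstacle is the local topological bookkeeping in part~2. Without smoothness of the norm, all one knows is that each bisector is locally a simple arc through the triple-equidistant point; showing that the three arcs partition a small disk around that point into six sectors labelled cyclically by the six orderings of $\{a,b,c\}$ by proximity---and thereby pinning down on which side of $J$ the ``$b$-closest'' sector sits---requires careful use of the local separation property of each bisector together with the continuity of the pairwise distance differences.
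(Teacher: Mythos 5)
First, note that the paper does not prove this proposition at all: it is imported verbatim from Horv\'ath's paper and from Ma's dissertation, so there is no internal proof to compare against. Judged on its own, your argument for part~1 is correct and is the standard one: for $y\in v^{\perp}\setminus\{0\}$ the vectors $y+t_1v$ and $y+t_2v$ are never positively proportional for $t_1\neq t_2$, so strict convexity makes $t\mapsto\Vert y+tv\Vert$ strictly convex, the difference $f_y$ is strictly increasing with limits $\mp\Vert v\Vert$, and the locally uniform convergence $|f_{y'}(t)-f_y(t)|\le 2\Vert y'-y\Vert$ gives continuity of the unique zero. (Minor care is needed at $y=0$, where $f_0$ is only non-strictly increasing, but the squeeze still works since $f_0$ is strictly increasing near its unique zero $1/2$.)

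Part~2, however, has a genuine gap: the entire content of the theorem is concentrated in the step you defer (``a careful local analysis at $x_1$ shows \ldots''), and that step is neither carried out nor clearly true by purely local reasoning. Two problems. (i) Your case split is incomplete: since you have not yet proved the theorem, $\gamma_{ab}\cap\gamma_{bc}$ need not be $\{x_1,x_2\}$ nor contain a shared sub-arc --- it can a priori be any closed subset of the two arcs (infinitely many isolated triple-equidistant points, a totally disconnected infinite set, etc.), in which case $J$ is not a Jordan curve and the proposed reduction ``replace an overlapping arc by $\gamma_{ac}$'' does not apply. Extracting two sub-arcs with common endpoints and disjoint interiors is possible but must be done explicitly. (ii) Even granting a genuine Jordan curve $J=\gamma_{ab}\cup\gamma_{bc}$, the claim that the local ``$b$-sides'' at $x_1$ land in opposite complementary regions of $J$ is exactly the six-sector structure you admit you cannot establish; for an arbitrary (non-smooth, non-strictly-convex) norm the three bisector branches through $x_1$ have no controlled local behaviour --- they may osculate, coincide along arcs, or accumulate intersections at $x_1$ --- so there is no ``local analysis'' available, and any correct proof must use a global input (e.g.\ the structure of $\partial\mathcal{B}_{\Vert\cdot\Vert,r_1}(x_1)\cap\partial\mathcal{B}_{\Vert\cdot\Vert,r_2}(x_2)$, two homothets of the unit ball both passing through $a,b,c$, whose boundary intersection has at most two connected components; this is essentially how the cited source argues). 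What you do have available from the paper is Lemma~\ref{lem:star}, which correctly identifies the two complementary components of each bisector as $\mathcal{H}^{<}_{\Vert\cdot\Vert}(a,b)$ and $\mathcal{H}^{<}_{\Vert\cdot\Vert}(b,a)$ (both star-shaped, hence connected), but that alone does not yield the sector picture at $x_1$. As it stands, part~2 is an unproved plan rather than a proof.
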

We will prove in Proposition~\ref{prop:trisector_is_(n-2)-dimensional} that the intersection of bisectors of three non-collinear points in $\mathbb{R}^n$ is an $(n-2)$-dimensional manifold. This result is interesting on its own, especially when considering Proposition~\ref{prop:trisector_2dimensions}, but it will also help to prove that the Voronoi-relevant vectors determine Voronoi cells for strictly convex and smooth norms.
To get these results, we first need that smooth norms are continuously differentiable functions. For this, we use a result from \cite{schneider} and express our statement in Corollary~\ref{cor:norm_function_is_C1} using convex duality from Proposition~\ref{prop:duality}.
\begin{proposition}[\cite{schneider}, Corollary 1.7.3]
\label{Schneider:support_function_differentiable}
Let $K \subseteq \mathbb{R}^n$ be a convex body and $u \in \mathbb{R}^n \setminus \lbrace 0 \rbrace$. The support function $h_K$ is differentiable at $u$ if and only if there exists exactly one $x \in K$ with $h_K(u) = \langle x,u \rangle$. In this case, $\nabla h_K(u) = x$.
\end{proposition}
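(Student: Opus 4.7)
The plan is to derive both directions from the basic identity that for a convex and positively homogeneous function such as $h_K$, the subdifferential at $u$ is exactly the exposed face
\begin{align*}
F(K,u) := \{ x \in K \mid \langle x,u \rangle = h_K(u) \},
\end{align*}
together with the standard fact that a finite convex function on $\mathbb{R}^n$ is differentiable at a point if and only if its subdifferential there is a singleton. I would, however, prove both implications from scratch using compactness, rather than invoking that machinery.

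For the ``if'' direction, assume $F(K,u) = \{x\}$. Convexity immediately gives the lower bound $h_K(u+v) \geq \langle x,u+v\rangle = h_K(u)+\langle x,v\rangle$. For the matching upper bound, by compactness of $K$ pick $y_v \in K$ achieving $h_K(u+v)=\langle y_v,u+v\rangle$. Then
\begin{align*}
h_K(u+v)-h_K(u)-\langle x,v\rangle \;=\; \underbrace{\langle y_v-x,u\rangle}_{\leq 0}+\langle y_v-x,v\rangle \;\leq\; \|y_v-x\|\cdot\|v\|,
\end{align*}
so it remains to show $y_v \to x$ as $v \to 0$. This is the main technical step: take any cluster point $y^\ast$ of $(y_v)$ in the compact set $K$; continuity of $h_K$ (as a sup of linear functions on a bounded set) together with $\langle y_v,u+v\rangle = h_K(u+v)$ forces $\langle y^\ast,u\rangle = h_K(u)$, hence $y^\ast \in F(K,u)=\{x\}$, so $y^\ast=x$. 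Uniqueness of the cluster point plus compactness yields convergence of the full net, giving Fréchet differentiability with $\nabla h_K(u)=x$.

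For the ``only if'' direction, assume $h_K$ is differentiable at $u$ with gradient $g$, and pick any $x \in F(K,u)$ (which is non-empty by compactness). For every $v \in \mathbb{R}^n$ and $t>0$,
\begin{align*}
h_K(u+tv) \;\geq\; \langle x,u+tv\rangle \;=\; h_K(u)+t\langle x,v\rangle,
\end{align*}
so dividing by $t$ and letting $t \to 0^+$ gives $\langle g,v\rangle \geq \langle x,v\rangle$. Applying the same inequality to $-v$ yields the reverse estimate, so $\langle g-x,v\rangle=0$ for all $v$, hence $x=g$. Thus $F(K,u)=\{g\}$, which simultaneously establishes uniqueness and identifies $x = \nabla h_K(u)$.

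The main obstacle is the compactness/continuity argument that $y_v \to x$ in the ``if'' direction. Everything else reduces to the elementary manipulation of the supremum defining $h_K$, but this step is where the full strength of $K$ being a convex \emph{body} (in particular, compact) enters; without it the argument breaks down, and it is also where the positivity of the dimension of $F(K,u)$ (in the non-unique case) would obstruct differentiability, since then one could find directions $v$ for which different maximizers $y_v$ give different limits of the difference quotient.
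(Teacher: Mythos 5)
Your proof is correct and complete. Note that the paper does not prove this proposition at all --- it is quoted verbatim from Schneider's book (Corollary 1.7.3), so there is no in-paper argument to compare against. Your self-contained argument is the standard one: the ``only if'' direction correctly identifies every maximizer with the gradient via one-sided difference quotients, and the ``if'' direction correctly sandwiches $0 \leq h_K(u+v)-h_K(u)-\langle x,v\rangle \leq \Vert y_v - x\Vert\,\Vert v\Vert$ and closes the gap with the compactness/cluster-point argument showing $y_v \to x$, which yields Fr\'echet (not merely G\^ateaux) differentiability. No gaps.
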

\begin{corollary}
\label{cor:norm_function_is_C1}
For every strictly convex body $K \subseteq \mathbb{R}^n$, we have that $h_K$ is continuously differentiable on $\mathbb{R}^n \setminus \lbrace 0 \rbrace.$
\end{corollary}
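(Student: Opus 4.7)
The plan is to reduce the claim to Proposition~\ref{Schneider:support_function_differentiable}: to prove that $h_K$ is $C^1$ on $\mathbb{R}^n \setminus \lbrace 0 \rbrace$, it suffices to show (i) that for every $u \in \mathbb{R}^n \setminus \lbrace 0 \rbrace$ there is a \emph{unique} $x_u \in K$ with $h_K(u) = \langle x_u, u \rangle$, which gives differentiability with $\nabla h_K(u) = x_u$, and (ii) that the map $u \mapsto x_u$ is continuous on $\mathbb{R}^n \setminus \lbrace 0 \rbrace$.

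For (i), I would argue by contradiction. Assume $x_1, x_2 \in K$ are distinct with $\langle x_i, u \rangle = h_K(u)$. By strict convexity of $K$, the midpoint $m := \tfrac{1}{2}(x_1 + x_2)$ lies in the interior of $K$, so some Euclidean ball $\mathcal{B}_{\Vert \cdot \Vert_2, \varepsilon}(m)$ is contained in $K$. Because $u \neq 0$, the point $m + \varepsilon \tfrac{u}{\Vert u \Vert_2}$ lies in $K$ and satisfies
\begin{align*}
\left\langle m + \varepsilon \tfrac{u}{\Vert u \Vert_2},\, u \right\rangle
= h_K(u) + \varepsilon \Vert u \Vert_2 > h_K(u),
\end{align*}
contradicting the definition of $h_K(u)$ as a supremum. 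Hence the maximizer $x_u$ is unique, and Proposition~\ref{Schneider:support_function_differentiable} gives differentiability with $\nabla h_K(u) = x_u$.

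For (ii), I would use compactness of $K$ and continuity of $h_K$ (which holds since $h_K$ is a finite convex function on $\mathbb{R}^n$). Let $u_k \to u$ in $\mathbb{R}^n \setminus \lbrace 0 \rbrace$, and set $x_k := \nabla h_K(u_k) \in K$. Any subsequence has a further subsequence converging to some $x^\ast \in K$ by compactness; passing to the limit in $\langle x_{k_j}, u_{k_j} \rangle = h_K(u_{k_j})$ yields $\langle x^\ast, u \rangle = h_K(u)$, so by the uniqueness established in (i), $x^\ast = x_u = \nabla h_K(u)$. Since every convergent subsequence has the same limit, the whole sequence $\nabla h_K(u_k)$ converges to $\nabla h_K(u)$. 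I do not expect any serious obstacle here; the key (and essentially the only non-routine) step is the strict-convexity argument in (i), which crucially requires $u \neq 0$ in order to produce the shift $\varepsilon u / \Vert u \Vert_2$ that beats the supposed maximum.
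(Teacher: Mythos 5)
Your proposal is correct and follows essentially the same route as the paper: uniqueness of the maximizer plus Proposition~\ref{Schneider:support_function_differentiable} gives differentiability with $\nabla h_K(u)=x_u$, and a compactness/subsequence argument combined with that uniqueness gives continuity of the gradient (the paper phrases this step as a proof by contradiction and derives uniqueness from the fact that a strictly convex body admits no hyperplane supporting it at two distinct boundary points, rather than from your midpoint argument, but the two are interchangeable). One cosmetic slip: $m+\varepsilon u/\Vert u \Vert_2$ lies on the boundary of the open ball $\mathcal{B}_{\Vert \cdot \Vert_2, \varepsilon}(m)$ rather than inside it, so you should take, say, $m+\tfrac{\varepsilon}{2}\, u/\Vert u \Vert_2$ instead.
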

\begin{proof}
  Let $u \in \mathbb{R}^n \setminus \lbrace 0 \rbrace$. Since $K$ is compact and $x \mapsto \langle x,u \rangle$ is continuous, there exists some $x \in K$ with $h_K(u) = \langle x,u \rangle$. In particular, $x \in \partial K$ and $\lbrace y \in \mathbb{R}^n \mid \langle y,u \rangle = h_K(u) \rbrace$ is a supporting hyperplane of $K$ at $x$. Since $K$ is strictly convex, supporting hyperplanes at distinct points of $\partial K$ are distinct, which implies that $x$ is the unique point in $K$ with $h_K(u) = \langle x,u \rangle$. By Proposition~\ref{Schneider:support_function_differentiable}, $h_K$ is differentiable at $u$.

  It is left to show that $\nabla h_K: \mathbb{R}^n \setminus \lbrace 0 
  \rbrace \to \mathbb{R}^n$ is continuous with respect to the Euclidean norm. Consider $u \in \mathbb{R}^n \setminus \lbrace 0 \rbrace$ and $\varepsilon > 0$. Assume for contradiction that for every $\delta >0$ one finds $v \in \mathbb{R}^n \setminus \lbrace 0 \rbrace$ with $\Vert v-u \Vert_2 < \delta$ but $\Vert \nabla h_K(v) - \nabla h_K(u) \Vert_2 \geq \varepsilon$. By Proposition~\ref{Schneider:support_function_differentiable}, $\nabla h_K(v) \in \partial K$ for every $v \in \mathbb{R}^n \setminus \lbrace 0 \rbrace$. Since $\partial K$ is compact, there is a sequence $(v_i)_{i \in \mathbb{Z}_{>0}} \subseteq \mathbb{R}^n \setminus \lbrace 0 \rbrace$ such that $\lim_{i \to \infty} v_i = u$, $z := \lim_{i \to \infty} \nabla h_K(v_i) \in \partial K$ and $\Vert \nabla h_K(v_i) - \nabla h_K(u) \Vert_2 \geq \varepsilon$ holds for every $i \in \mathbb{Z}_{>0}$. On the one hand, this implies $\Vert z - \nabla h_K(u) \Vert_2 \geq \varepsilon$.
  On the other hand, we get from Proposition~\ref{Schneider:support_function_differentiable} that $ \lim_{i \to \infty} \langle v_i, \nabla h_K(v_i) \rangle = \lim_{i \to \infty} h_K(v_i) = h_K(u)$
  and from the Cauchy--Schwarz inequality that $|\langle v_i, z- \nabla h_K(v_i) \rangle| \leq \Vert v_i \Vert_2 \cdot \Vert z- \nabla h_K(v_i) \Vert_2$, which yield
  \begin{align*}
    \langle u, z- \nabla h_K(u) \rangle
    &= \lim_{i \to \infty} \langle v_i, z- \nabla h_K(v_i) \rangle
    + \lim_{i \to \infty} \langle v_i, \nabla h_K(v_i) \rangle
    - \langle u, \nabla h_K(u) \rangle \\
    &= 0 + h_K(u) - h_K(u) = 0.
  \end{align*}
  Again by Prooposition~\ref{Schneider:support_function_differentiable}, we have $\langle u,z \rangle = h_K(u)$ and $z = \nabla h_K(u)$, which contradicts $\Vert z - \nabla h_K(u) \Vert_2 \geq \varepsilon$.
\end{proof}
To prove that the intersection of bisectors as described above is a manifold of proper dimension, we need the following simple version of the Regular Level Set Theorem.
\begin{proposition}
\label{prop:regular_level_set_theorem}
Let $O \subseteq \mathbb{R}^n$ be open and $F: O \to \mathbb{R}^d$ be continuously differentiable with $0 < d < n$. Define $M := \lbrace x \in O \mid F(x)=0 \rbrace$, and denote by $J_F$ the Jacobian $(d \times n)$-matrix of all first-order partial derivatives of $F$, i.e., the $(i,j)$-th entry of $J_F$ is $\frac{\partial F_i}{\partial x_j}$.
If, for every $x \in M$, the  matrix $J_F(x)$ has full rank, $M$ is an $(n-d)$-dimensional manifold.
\end{proposition}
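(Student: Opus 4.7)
The plan is to invoke the Implicit Function Theorem locally at each point of $M$, using the full rank hypothesis to produce the required smooth parametrization.

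Fix an arbitrary $x_0 \in M$. By hypothesis, $J_F(x_0)$ is a $(d \times n)$-matrix of rank $d$, so it contains $d$ linearly independent columns. After permuting the coordinates of $\mathbb{R}^n$ (which is a smooth change of variables that does not affect whether $M$ is a manifold), I may assume that the submatrix formed by the last $d$ columns of $J_F(x_0)$ is invertible. Write $x = (y,z)$ with $y \in \mathbb{R}^{n-d}$ and $z \in \mathbb{R}^d$, and write $x_0 = (y_0, z_0)$. Then $\frac{\partial F}{\partial z}(x_0) \in \mathbb{R}^{d \times d}$ is invertible.

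The Implicit Function Theorem then yields open neighborhoods $U \subseteq \mathbb{R}^{n-d}$ of $y_0$ and $V \subseteq \mathbb{R}^d$ of $z_0$ with $U \times V \subseteq O$, together with a continuously differentiable map $g: U \to V$ such that
\begin{align*}
\{(y,z) \in U \times V \mid F(y,z) = 0\} = \{(y, g(y)) \mid y \in U\}.
\end{align*}
The map $\varphi: U \to M \cap (U \times V)$, $y \mapsto (y, g(y))$, is a homeomorphism onto its image (its inverse is the restriction of the projection $(y,z) \mapsto y$), and its differential $D\varphi(y) = (I_{n-d}, Dg(y))^T$ has full rank $n-d$ everywhere. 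Hence $\varphi$ is a chart for $M$ near $x_0$ of dimension $n-d$.

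Since $x_0 \in M$ was arbitrary, every point of $M$ has an open neighborhood in $M$ that is the image of such a chart, and the transition maps between any two overlapping charts are continuously differentiable because each chart can be realized as a graph and the ambient projections and the maps $g$ are $C^1$. Therefore $M$ carries the structure of an $(n-d)$-dimensional $C^1$-manifold, which is the conclusion. The only real obstacle is the bookkeeping of coordinates needed to legally apply the Implicit Function Theorem; once the correct $d \times d$ submatrix of $J_F(x_0)$ has been singled out and the coordinates permuted, the rest of the argument is a direct reading of the theorem.
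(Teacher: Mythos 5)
Your proposal is correct and follows essentially the same route as the paper's own proof: permute coordinates so that the last $d$ columns of $J_F(x_0)$ are invertible, apply the Implicit Function Theorem to write $M$ locally as the graph of a $C^1$ map $g$, and observe that $y \mapsto (y,g(y))$ is a homeomorphism onto a neighborhood of $x_0$ in $M$. The additional remarks on the differential of the chart and on transition maps are fine but not needed for the statement as used in the paper.
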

\begin{proof}
  Let $x \in M$. Since $J_F(x)$ has full rank, we can assume without loss of generality that the last $d$ columns of $J_F(x)$ form an invertible matrix. By the implicit function theorem, there exist open neighborhoods $U$ and $V$ of $(x_1, \ldots, x_{n-d})$ and $(x_{n-d+1}, \ldots, x_n)$, respectively, with $U \times V \subseteq O$ as well as a unique continuously differentiable function $g:U \to V$ such that
  $\lbrace (y,g(y)) \mid y \in U \rbrace = \lbrace (y,z) \in U \times V \mid F(y,z)=0 \rbrace$. Hence, $\varphi: U \to M \cap (U \times V), y \mapsto (y, g(y))$ is a homeomorphism.
\end{proof}
\begin{proposition}
\label{prop:trisector_is_(n-2)-dimensional}
Let $\Vert \cdot \Vert: \mathbb{R}^n \to \mathbb{R}_{\geq 0}$ be a strictly convex and smooth norm with $n>2$. For all non-collinear $a,b,c \in \mathbb{R}^n$, we have that $\mathcal{H}_{\Vert \cdot \Vert}^{=}(a,b) \cap \mathcal{H}_{\Vert \cdot \Vert}^{=}(b,c)$ is an $(n-2)$-dimensional manifold.
\end{proposition}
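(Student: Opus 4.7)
The plan is to apply the Regular Level Set Theorem (Proposition~\ref{prop:regular_level_set_theorem}) to the map
\begin{align*}
F: \mathbb{R}^n \setminus \lbrace a,b,c \rbrace \longrightarrow \mathbb{R}^2, \quad x \longmapsto \left( \Vert x-a \Vert - \Vert x-b \Vert, \; \Vert x-b \Vert - \Vert x-c \Vert \right),
\end{align*}
whose zero set is precisely $\mathcal{H}_{\Vert \cdot \Vert}^{=}(a,b) \cap \mathcal{H}_{\Vert \cdot \Vert}^{=}(b,c)$ (and avoids $a,b,c$ because these points are pairwise distinct). Since $\Vert \cdot \Vert$ is smooth, Proposition~\ref{prop:duality}(3) yields that the convex dual $K^\circ$ of the unit ball $K$ of $\Vert \cdot \Vert$ is strictly convex, and then Corollary~\ref{cor:norm_function_is_C1} guarantees that $\Vert \cdot \Vert = h_{K^\circ}$ is continuously differentiable on $\mathbb{R}^n \setminus \lbrace 0 \rbrace$. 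Hence $F$ is $C^1$ on its open domain.

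Writing $g_v(x) := \nabla_x \Vert x-v \Vert = \nabla h_{K^\circ}(x-v)$, the two rows of the Jacobian $J_F(x)$ are $g_a(x) - g_b(x)$ and $g_b(x) - g_c(x)$. By Proposition~\ref{prop:regular_level_set_theorem}, it therefore suffices to establish that whenever $\Vert x-a \Vert = \Vert x-b \Vert = \Vert x-c \Vert$, the three points $g_a(x), g_b(x), g_c(x) \in \partial K^\circ$ are \emph{affinely independent} in $\mathbb{R}^n$. The key geometric fact, supplied by Proposition~\ref{Schneider:support_function_differentiable}, is that $g_v(x)$ is the unique point of $K^\circ$ at which a supporting hyperplane of $K^\circ$ has outward normal $x-v$.

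The technical heart of the argument is this affine-independence claim, which I will split into two steps. \emph{Step~1 (distinctness).} If $g_v(x) = g_w(x)$ for $v, w \in \lbrace a, b, c \rbrace$, then since $\Vert \cdot \Vert$ is strictly convex, Proposition~\ref{prop:duality}(3) forces $K^\circ$ to be smooth, so the supporting hyperplane at the common gradient is unique; consequently $x-v$ and $x-w$ must be positive scalar multiples of each other, and combining $x-v = \alpha(x-w)$ with $\Vert x-v \Vert = \Vert x-w \Vert$ forces $\alpha = 1$ and hence $v = w$. \emph{Step~2 (non-collinearity).} Since $\Vert \cdot \Vert$ is smooth, Proposition~\ref{prop:duality}(3) also gives that $K^\circ$ is strictly convex, so for any line $L$ the intersection $K^\circ \cap L$ is a segment $[q_1, q_2]$ all of whose relative interior points lie in $\intt(K^\circ)$; consequently $\partial K^\circ \cap L \subseteq \lbrace q_1, q_2 \rbrace$ contains at most two points, excluding three distinct collinear points on $\partial K^\circ$.

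The main obstacle lies precisely in Steps~1 and~2: it is exactly there that both hypotheses on $\Vert \cdot \Vert$ --- strict convexity \emph{and} smoothness --- are genuinely used, each through a different direction of Proposition~\ref{prop:duality}(3). By contrast, the non-collinearity hypothesis on $a, b, c$ does not enter the gradient computation and serves only to avoid degenerate configurations in which the bisector intersection might be forced to be empty by a trivial argument along the line through $a, b, c$.
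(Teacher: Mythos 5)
Your proof is correct and follows essentially the same route as the paper: the same map $F$, the same use of Corollary~\ref{cor:norm_function_is_C1} via the duality of Proposition~\ref{prop:duality}, and the same two-step verification that $J_F(x)$ has rank two (distinctness of the three gradients from smoothness of $K^\circ$, and no three collinear boundary points from strict convexity of $K^\circ$). Only your closing remark is slightly off: pairwise distinctness of $a,b,c$, which is a consequence of non-collinearity, is exactly what Step~1 contradicts, so the hypothesis does enter the rank argument.
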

\begin{proof}
  Let $K$ be the closed unit ball of $\Vert \cdot \Vert$. Then~-- by Proposition~\ref{prop:duality}~-- the dual body $K^\circ$ is also strictly convex and smooth, and $\Vert \cdot \Vert = h_{K^\circ}$.
  By Corollary~\ref{cor:norm_function_is_C1},
  \begin{align*}
    F: \mathbb{R}^n \setminus \lbrace a,b,c \rbrace &\longrightarrow \mathbb{R}^2, \\
    x &\longmapsto \left( \begin{array}{c} 
      \Vert x-a \Vert - \Vert x-b \Vert \\
      \Vert x-b \Vert - \Vert x-c \Vert
    \end{array} \right)
  \end{align*}
  is continuously differentiable. By Proposition~\ref{prop:regular_level_set_theorem}, it is enough to show that $J_F(x)$ has rank two for every $x \in \mathcal{H}_{\Vert \cdot \Vert}^{=}(a,b) \cap \mathcal{H}_{\Vert \cdot \Vert}^{=}(b,c)$. Hence, consider such an $x$ in the following. For $p \in \lbrace a,b,c \rbrace$ we set $y_p := \nabla h_{K^\circ} (x-p)$.

  First, we assume for contradiction that one of the three equalities $y_a = y_b$, $y_a = y_c$ or $y_b = y_c$ holds. Without loss of generality, let $y_a = y_b =: y$. By Proposition~\ref{Schneider:support_function_differentiable}, $y$ is the unique point in $K^\circ$ with $\langle y,x-a \rangle = \Vert x-a \Vert = \Vert x-b \Vert = \langle y, x-b \rangle$. Thus, $\lbrace z \in \mathbb{R}^n \mid \langle z,x-a \rangle = \Vert x-a \Vert \rbrace$ and $\lbrace z \in \mathbb{R}^n \mid \langle z,x-b \rangle = \Vert x-b \Vert \rbrace$ are supporting hyperplanes of $K^\circ$ at $y$. The smoothness of $K^\circ$ implies the equality of both hyperplanes. From this it follows that $x-a$ and $x-b$ need to be linearly dependent, and due to $\Vert x-a \Vert = \Vert x-b  \Vert$ we have $a = b$, which, as shown in the previous paragraph, contradicts the non-collinearity of $a,b,c$.

  Secondly, we assume that the two rows of $J_F(x)$ are linearly dependent, i.e., there exists some $\lambda \in \mathbb{R}$ such that $y_b - y_c = \lambda (y_a-y_b)$. This yields $y_c = -\lambda y_a + (1+\lambda) y_b$. Since $K^\circ$ is strictly convex and $y_a$, $y_b$ and $y_c$ lie on the boundary of $K^\circ$, we have $-\lambda \in \lbrace 0,1 \rbrace$. This implies $y_c \in \lbrace y_a, y_b \rbrace$, which, as shown in the previous paragraph, contradicts the non-collinearity of $a,b,c$. Hence, $J_F(x)$ has rank two.
\end{proof}
We need one more ingredient to show that the Voronoi-relevant vectors determine the Voronoi cell, namely that the boundary of a Voronoi cell (which only consists of bisector parts) is $(n-1)$-dimensional.
\begin{proposition}
\label{prop:Voronoi_cell_boundary_homeomorphic_to_sphere}
For every lattice $\Lambda \subseteq \mathbb{R}^n$ and every strictly convex norm $\Vert \cdot \Vert$, the boundary of the Voronoi cell $\mathcal{V}(\Lambda, \Vert \cdot \Vert)$ is homeomorphic to the $(n-1)$-dimensional sphere $S^{n-1}$.
\end{proposition}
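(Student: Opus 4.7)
The plan is to show that radial projection $\phi\colon \partial\mathcal{V}(\Lambda, \Vert\cdot\Vert) \to S^{n-1}$, $x \mapsto x/\Vert x\Vert_2$, is a homeomorphism. The argument has three ingredients: $\mathcal{V}(\Lambda, \Vert\cdot\Vert)$ is compact, $0$ lies in its interior, and each ray from the origin meets $\partial\mathcal{V}(\Lambda, \Vert\cdot\Vert)$ in exactly one point.

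First I would observe that $\mathcal{V}(\Lambda, \Vert\cdot\Vert)$ is closed, as an intersection of closed halfspaces $\mathcal{H}^{\leq}_{\Vert\cdot\Vert}(0,v)$, and bounded (it is contained in the closed $\Vert\cdot\Vert$-ball around $0$ of radius $\mu(\Lambda, \Vert\cdot\Vert)$). Moreover, $0 \in \intt(\mathcal{V}(\Lambda, \Vert\cdot\Vert))$ since the discreteness of $\Lambda$ together with the continuity of $\Vert\cdot\Vert$ implies $\mathcal{B}_{\Vert\cdot\Vert, \lambda_1(\Lambda, \Vert\cdot\Vert)/2}(0) \subseteq \mathcal{V}(\Lambda, \Vert\cdot\Vert)$. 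Hence $\partial\mathcal{V}(\Lambda, \Vert\cdot\Vert)$ is compact and $\phi$ is well-defined and continuous.

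For each $u \in S^{n-1}$ define $t_u := \sup\{t \geq 0 \mid tu \in \mathcal{V}(\Lambda, \Vert\cdot\Vert)\}$, which is a positive real number by the above, with $t_u u \in \partial\mathcal{V}(\Lambda, \Vert\cdot\Vert)$ by closedness. The crucial step is to show that $t_u u$ is the unique point of $\partial\mathcal{V}(\Lambda, \Vert\cdot\Vert)$ on the ray $\{tu \mid t \geq 0\}$; equivalently, that $tu \in \intt(\mathcal{V}(\Lambda, \Vert\cdot\Vert))$ for $0 \leq t < t_u$ and $tu \notin \mathcal{V}(\Lambda, \Vert\cdot\Vert)$ for $t > t_u$. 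Since $t_u u$ is a boundary point, a discreteness-plus-continuity argument forces it onto some bisector $\mathcal{H}^{=}_{\Vert\cdot\Vert}(0, v)$: otherwise $\Vert t_u u\Vert < \Vert t_u u - w\Vert$ for every $w \in \Lambda \setminus \lbrace 0 \rbrace$, and since $\Vert t_u u - w\Vert - \Vert t_u u\Vert \to \infty$ as $\Vert w\Vert \to \infty$ the infimum over $w$ is attained and strictly positive, placing a whole neighborhood of $t_u u$ inside $\mathcal{V}(\Lambda, \Vert\cdot\Vert)$, a contradiction. Invoking the second part of Lemma~\ref{lem:star} for this $v$ yields $\tau(t_u u) \notin \mathcal{H}^{\leq}_{\Vert\cdot\Vert}(0, v)$ for every $\tau > 1$, so $tu \notin \mathcal{V}(\Lambda, \Vert\cdot\Vert)$ for $t > t_u$. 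Combining both parts of Lemma~\ref{lem:star} simultaneously for all $w \in \Lambda \setminus \lbrace 0 \rbrace$ shows that, for $0 < t < t_u$, the point $tu$ lies strictly inside every halfspace $\mathcal{H}^{\leq}_{\Vert\cdot\Vert}(0, w)$, and the same discreteness argument as above then yields $tu \in \intt(\mathcal{V}(\Lambda, \Vert\cdot\Vert))$.

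It follows that $\phi$ is bijective with inverse $u \mapsto t_u u$. Being a continuous bijection from a compact space to a Hausdorff space, it is automatically a homeomorphism, so no separate continuity estimate for $u \mapsto t_u$ is needed. The main obstacle is precisely the uniqueness of the exit point along each ray; this relies entirely on strict convexity via the second part of Lemma~\ref{lem:star}, and without it a ray could re-enter $\mathcal{V}(\Lambda, \Vert\cdot\Vert)$ after leaving, as illustrated for the polyhedral $\ell_1$-norm in Figure~\ref{fig:Voronoi_cell_1norm}.
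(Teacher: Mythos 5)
Your proposal is correct and follows essentially the same route as the paper: central projection onto $S^{n-1}$, uniqueness of the ray's exit point via the strict-convexity part of Lemma~\ref{lem:star}, and the continuous-bijection-from-compact-to-Hausdorff argument. The only cosmetic difference is that the paper establishes that non-bisector points of $\mathcal{V}(\Lambda,\Vert\cdot\Vert)$ are interior by invoking the finiteness of the weak Voronoi-relevant vectors (Proposition~\ref{prop:trivial_upper_bound} and Theorem~\ref{thm:generalized_Voronoi-relevant_vectors_define_Voronoi-cell}), whereas you give a self-contained discreteness-and-growth argument; both work.
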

\begin{proof}
  The Voronoi cell $\mathcal{V}(\Lambda, \Vert \cdot \Vert)$ is clearly bounded. Since halfspaces of the form $\mathcal{H}_{\Vert \cdot \Vert}^{<}(v,0)$ are open, $\mathcal{V}(\Lambda, \Vert \cdot \Vert)$ is also closed. Thus, the Voronoi cell and its boundary are compact. Furthermore, the boundary of the Voronoi cell is given by
  \begin{align}
    \label{eq:boundary_Voronoi_cell}
    \partial\mathcal{V}(\Lambda, \Vert \cdot \Vert) = \left\lbrace x \in \mathcal{V}(\Lambda, \Vert \cdot \Vert) \mid \exists v \in \Lambda \setminus \lbrace 0 \rbrace: \Vert x \Vert = \Vert x - v \Vert \right\rbrace.
  \end{align}
  Indeed, given an $x \in \mathcal{V}(\Lambda, \Vert \cdot \Vert)$, which is not contained in the right hand side of~\eqref{eq:boundary_Voronoi_cell}, we have for all weak Voronoi-relevant vectors $v \in \Lambda$ that $\Vert x \Vert < \Vert x-v \Vert$. Because all $\mathcal{H}_{\Vert \cdot \Vert}^{<}(0,v)$ are open, we find $\varepsilon_v \in \mathbb{R}_{>0}$ with $\mathcal{B}_{\Vert \cdot \Vert_2, \varepsilon_v}(x) \subseteq \mathcal{H}_{\Vert \cdot \Vert}^{<}(0,v)$. Since there are only finitely many weak Voronoi-relevant vectors (Proposition~\ref{prop:trivial_upper_bound}) and these define the Voronoi cell (Theorem~\ref{thm:generalized_Voronoi-relevant_vectors_define_Voronoi-cell}), we can choose the minimal $\varepsilon$ of all these $\varepsilon_v$. For this $\varepsilon$  we obtain $\mathcal{B}_{\Vert \cdot \Vert_2, \varepsilon}(x) \subseteq \mathcal{V}(\Lambda, \Vert \cdot \Vert)$. Hence, $x \notin \partial \mathcal{V}(\Lambda, \Vert \cdot \Vert)$. The other inclusion (``$\supseteq$'') follows from Lemma~\ref{lem:star} by considering the ray $\lbrace \tau x \mid \tau \geq 0 \rbrace$ from $0$ through $x$.

  The desired homeomorphism $\varphi: \partial \mathcal{V}(\Lambda, \Vert \cdot \Vert) \to S^{n-1}$ is now given by the central projection of the boundary of the Voronoi cell on the sphere. This projection maps every point $x \in \partial \mathcal{V}(\Lambda, \Vert \cdot \Vert)$ to the unique intersection point $x'$ of $S^{n-1}$ and the ray from 0 though $x$.

  For distinct $x,y \in \partial \mathcal{V}(\Lambda, \Vert \cdot \Vert)$ with $\varphi(x)=\varphi(y)$, $x$ and $y$ have to be linearly dependent. Without loss of generality, we can assume that $x = \lambda y$ for some $\lambda \in (0,1)$.
  From Lemma~\ref{lem:star} and \eqref{eq:boundary_Voronoi_cell} we get that $x \notin \partial \mathcal{V}(\Lambda, \Vert \cdot \Vert)$, which contradicts our assumption. Therefore, $\varphi$ is injective.
  For any $x' \in S^{n-1}$, one can define $\tau := \sup \lbrace \lambda \in \mathbb{R}_{>0} \mid \lambda x' \in \intt \left( \mathcal{V}(\Lambda, \Vert \cdot \Vert) \right)\rbrace$ such that $\tau x' \in \partial \mathcal{V}(\Lambda, \Vert \cdot \Vert)$ with $\varphi (\tau x') = x'$. Hence, $\varphi$ is bijective. It is for example shown in \cite{kelly_weiss} that $\varphi$ is continuous. Since $\varphi$ is a continuous bijection from a compact space onto a Hausdorff space, it is already a homeomorphism (e.g., Corollary 2.4 in Chapter 7 of~\cite{homeomorphism}).
\end{proof}
Now we can show that the Voronoi-relevant vectors of a lattice $\Lambda \subseteq \mathbb{R}^n$ define its Voronoi cell. We show this first for the \emph{strict Voronoi cell}
\begin{align*}
  \mathcal{V}^{(i)}(\Lambda, \Vert \cdot \Vert) := \lbrace x \in \mathbb{R}^n \mid \forall v \in \Lambda \setminus \lbrace 0 \rbrace: \Vert x \Vert < \Vert x-v \Vert \rbrace.
\end{align*}
\begin{theorem}
\label{thm:Voronoi-relevant_vectors_define_strict_Voronoi-cell}
For every lattice $\Lambda \subseteq \mathbb{R}^n$ and every strictly convex and smooth norm $\Vert \cdot \Vert$, the strict Voronoi cell $\mathcal{V}^{(i)}(\Lambda, \Vert \cdot \Vert)$ is equal to
\begin{align*}
  \tilde{\mathcal{V}}^{(i)}(\Lambda, \Vert \cdot \Vert)
  := \left\lbrace x \in \mathbb{R}^n \;\middle\vert\; \begin{array}{l}
    \forall v \in \Lambda \text{ Voronoi-relevant with} \\
    \text{respect to } \Vert \cdot \Vert:  \Vert x \Vert < \Vert x-v \Vert
  \end{array} \right\rbrace.
\end{align*}
For two-dimensional lattices, smoothness of the norm is not necessary.
\end{theorem}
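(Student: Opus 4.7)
The inclusion $\mathcal{V}^{(i)}(\Lambda, \Vert \cdot \Vert) \subseteq \tilde{\mathcal{V}}^{(i)}(\Lambda, \Vert \cdot \Vert)$ is immediate, since Voronoi-relevant vectors are a subset of $\Lambda \setminus \lbrace 0 \rbrace$. For the reverse direction I would argue by contradiction: assume $x \in \tilde{\mathcal{V}}^{(i)} \setminus \mathcal{V}^{(i)}$ and produce a Voronoi-relevant vector~$v$ violating the defining strict inequality of $\tilde{\mathcal{V}}^{(i)}$. The first step is to travel along the ray from~$0$ through~$x$ until leaving the Voronoi cell. Setting $t_0 := \sup \lbrace t \geq 0 : tx \in \mathcal{V}(\Lambda, \Vert \cdot \Vert) \rbrace$, a short argument using the star-shapedness of $\mathcal{V}(\Lambda, \Vert \cdot \Vert)$ (via Lemma~\ref{lem:star}) together with its closedness, boundedness, and the fact that $0$ lies in its interior places $y := t_0 x$ on $\partial \mathcal{V}(\Lambda, \Vert \cdot \Vert)$ with $t_0 \in (0, 1]$; the upper bound $t_0 \leq 1$ uses Lemma~\ref{lem:star} part~2, which guarantees that $\mathcal{V}(\Lambda, \Vert \cdot \Vert) \setminus \mathcal{V}^{(i)}(\Lambda, \Vert \cdot \Vert) \subseteq \partial \mathcal{V}(\Lambda, \Vert \cdot \Vert)$ for strictly convex norms.

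The heart of the plan is to show $y \in \mathcal{H}^{=}_{\Vert \cdot \Vert}(0, v)$ for some Voronoi-relevant~$v$. Writing $F_u := \mathcal{V}(\Lambda, \Vert \cdot \Vert) \cap \mathcal{H}^{=}_{\Vert \cdot \Vert}(0, u)$, I would note that by Proposition~\ref{prop:trivial_upper_bound} there are only finitely many weak Voronoi-relevant vectors~$u$ and that $\partial \mathcal{V}(\Lambda, \Vert \cdot \Vert) = \bigcup_u F_u$. For~$u$ weak but not Voronoi-relevant, every $z \in F_u$ is forced by the definitions to also lie on $\mathcal{H}^{=}_{\Vert \cdot \Vert}(0, w)$ for some other weak Voronoi-relevant~$w$. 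When $0, u, w$ are non-collinear, Proposition~\ref{prop:trisector_is_(n-2)-dimensional} (for $n > 2$) or Proposition~\ref{prop:trisector_2dimensions} part~2 (for $n = 2$) bounds the intersection of the two bisectors by an $(n-2)$-dimensional set. In the collinear case $w = \alpha u$ with $\alpha \notin \lbrace 0, 1 \rbrace$, any candidate $z$ in the intersection would satisfy $g(0) = g(1) = g(\alpha)$ for $g(t) := \Vert z - t u \Vert$; but $g$ is either an absolute-value function (when $z$ is parallel to~$u$) or strictly convex (otherwise, by strict convexity of $\Vert \cdot \Vert$ applied to the triangle inequality), and neither takes the same value at three distinct points, so the intersection is empty. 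Hence each $F_u$ for non-Voronoi-relevant~$u$ is a finite union of closed sets of dimension at most $n-2$ and has empty interior in the $(n-1)$-sphere $\partial \mathcal{V}(\Lambda, \Vert \cdot \Vert)$ (Proposition~\ref{prop:Voronoi_cell_boundary_homeomorphic_to_sphere}). Therefore $\bigcup_{v \text{ Voronoi-relevant}} F_v$ is dense in $\partial \mathcal{V}(\Lambda, \Vert \cdot \Vert)$; approximating $y$ by a sequence in this union, extracting a constant-index subsequence using the finiteness of the weak Voronoi-relevant vectors, and exploiting the closedness of each $F_v$ yields $y \in F_v$ for some Voronoi-relevant~$v$.

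To conclude I would apply Lemma~\ref{lem:star} part~2 to $y$ and~$v$: if $t_0 = 1$ then $y = x$ gives $\Vert x \Vert = \Vert x - v \Vert$, and if $t_0 < 1$ then $\tau := 1/t_0 > 1$ gives $\Vert x \Vert = \tau \Vert y \Vert > \Vert \tau y - v \Vert = \Vert x - v \Vert$. Either outcome contradicts $x \in \tilde{\mathcal{V}}^{(i)}(\Lambda, \Vert \cdot \Vert)$, which demands $\Vert x \Vert < \Vert x - v \Vert$ for every Voronoi-relevant~$v$. I expect the density step in the middle paragraph to be the main obstacle, and in particular the handling of the collinear case and the dimension theory: Proposition~\ref{prop:trisector_is_(n-2)-dimensional} requires the norm to be continuously differentiable away from~$0$ (ensured by smoothness via Corollary~\ref{cor:norm_function_is_C1}), which is precisely why smoothness is needed for $n > 2$, whereas the two-dimensional case can instead fall back on Proposition~\ref{prop:trisector_2dimensions} part~2.
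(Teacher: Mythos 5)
Your proposal is correct and runs on exactly the same machinery as the paper's proof: Lemma~\ref{lem:star}, the fact that $\partial\mathcal{V}(\Lambda,\Vert\cdot\Vert)$ is an $(n-1)$-manifold (Proposition~\ref{prop:Voronoi_cell_boundary_homeomorphic_to_sphere}), and the dimension count for pairwise bisector intersections (Propositions~\ref{prop:trisector_2dimensions} and~\ref{prop:trisector_is_(n-2)-dimensional}). The middle step is organized differently, though. The paper works locally: it chooses $\varepsilon,\delta$ so that inside a Euclidean $\delta$-ball around the boundary point $y$ only the lattice vectors $w_1,\dots,w_l$ equidistant from $y$ matter, then perturbs $y$ to a nearby boundary point $z$ lying on exactly one bisector $\mathcal{H}^{=}_{\Vert\cdot\Vert}(0,w_i)$, so that $z$ certifies $w_i$ is Voronoi-relevant and $\Vert y\Vert=\Vert y-w_i\Vert$ contradicts $y\in\tilde{\mathcal{V}}^{(i)}$ directly. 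You instead prove the global statement that $\bigcup_{v\ \mathrm{VR}}F_v$ is dense in $\partial\mathcal{V}(\Lambda,\Vert\cdot\Vert)$, use finiteness of the weak Voronoi-relevant vectors and closedness of each $F_v$ to place $y$ itself on a Voronoi-relevant bisector, and transport the contradiction back to $x$ via Lemma~\ref{lem:star}(2); this is a clean alternative that also yields the facet-density fact as a byproduct. The two arguments are at the same level of rigor: your claim that a set covered by finitely many $(n-2)$-manifolds has empty interior in the $(n-1)$-manifold $\partial\mathcal{V}(\Lambda,\Vert\cdot\Vert)$ silently invokes invariance of domain, exactly as the paper's selection of $z$ in~\eqref{eq:corners_of_Voronoi_cell_have_points_on_exactly_one_bisector_in_neighborhood} does. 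One point in your favor is that you explicitly dispose of the collinear case $w=\alpha u$ via strict convexity of $t\mapsto\Vert z-tu\Vert$; the paper applies Proposition~\ref{prop:trisector_is_(n-2)-dimensional} to all pairs $(w_{i_1},w_{i_2})$ without noting that the non-collinearity hypothesis must be checked (it holds there for precisely the reason you give).
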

\begin{proof}
  We first assume that the underlying norm is strictly convex and smooth. It is clear that $\mathcal{V}^{(i)}(\Lambda, \Vert \cdot \Vert) \subseteq \tilde{\mathcal{V}}^{(i)}(\Lambda, \Vert \cdot \Vert)$. For the other direction, let $x \in \tilde{\mathcal{V}}^{(i)}(\Lambda, \Vert \cdot \Vert)$ and assume for contradiction that $x \notin \mathcal{V}^{(i)}(\Lambda, \Vert \cdot \Vert)$, i.e., there exists some $u \in \Lambda \setminus \lbrace 0 \rbrace$ with $\Vert x-u \Vert \leq \Vert x \Vert$.

  If $x \notin \mathcal{V}(\Lambda, \Vert \cdot \Vert)$, let $k \in \mathbb{Z}_{>0}$ with $\lbrace u \in \Lambda \mid \Vert x-u \Vert < \Vert x \Vert \rbrace = \lbrace u_1, \ldots, u_k \rbrace$.
  As in the proof of Theorem~\ref{thm:generalized_Voronoi-relevant_vectors_define_Voronoi-cell}, we find for every $1 \leq i \leq k $ some $\tau_i \in (0,1)$ with $\Vert \tau_i x - u_i \Vert = \tau_i \Vert x \Vert$, and we pick $1 \leq j \leq k$ such that $\tau_j = \min \lbrace \tau_1, \ldots, \tau_k \rbrace$ and $y := \tau_j x \in \mathcal{V}(\Lambda, \Vert \cdot \Vert) \setminus \mathcal{V}^{(i)}(\Lambda, \Vert \cdot \Vert)$.
  If $x \in \mathcal{V}(\Lambda, \Vert \cdot \Vert)$, set directly $y := x$.

  Now we write $\lbrace w \in \Lambda \setminus \lbrace 0 \rbrace \mid \Vert y-w \Vert =  \Vert y \Vert \rbrace = \lbrace w_1, \ldots, w_l \rbrace$ for $l \in \mathbb{Z}_{>0}$. We have $\overline{\mathcal{B}}_{\Vert \cdot \Vert, \Vert y \Vert}(y) \cap \Lambda = \lbrace 0, w_1, \ldots, w_l \rbrace$
  and can use the same ideas as in the proof of Proposition~\ref{prop:Voronoi_relevant_has_facet}.
  Since $\Lambda$ is discrete, there is an $\varepsilon \in \mathbb{R}_{>0}$ such that $\mathcal{B}_{\Vert \cdot \Vert, \Vert y \Vert+\varepsilon}(y) \cap \Lambda = \lbrace 0, w_1, \ldots, w_l \rbrace$. By the continuity of $\Vert \cdot \Vert$ with respect to the Euclidean norm, we find $\delta_1, \delta_2 \in \mathbb{R}_{>0}$ such that $|\Vert y \Vert - \Vert z_1 \Vert| < \frac{\varepsilon}{2}$ holds for every $z_1 \in \mathcal{B}_{\Vert \cdot \Vert_2, \delta_1}(y)$ and $\Vert z_2-y \Vert < \frac{\epsilon}{2}$ holds for every $z_2 \in \mathcal{B}_{\Vert \cdot \Vert_2, \delta_2}(y)$. Define $\delta := \min \lbrace \delta_1, \delta_2 \rbrace$. We have for every $z \in \mathcal{B}_{\Vert \cdot \Vert_2, \delta}(y)$ that
  \begin{align}
    \label{eq:small_environment_around_y}
    \overline{\mathcal{B}}_{\Vert \cdot \Vert, \Vert z \Vert}(z) \subseteq \mathcal{B}_{\Vert \cdot \Vert, \Vert y \Vert+\varepsilon}(y).
  \end{align}
  Locally around $y$, the set
  \begin{align}
    \label{eq:definition_of_S}
    \mathcal{S} := \left\lbrace z \in \mathbb{R}^n \mid
     \forall \, 1 \leq i \leq l: \Vert z \Vert \leq \Vert z-w_i \Vert,
      \exists \, 1 \leq m \leq l: \Vert z \Vert = \Vert z-w_m \Vert
       \right\rbrace
     \end{align}
     coincides with the boundary of the Voronoi cell, i.e., $\mathcal{S} \cap \mathcal{B}_{\Vert \cdot \Vert_2, \delta}(y) = \partial \mathcal{V}(\Lambda, \Vert \cdot \Vert) \cap \mathcal{B}_{\Vert \cdot \Vert_2, \delta}(y)$.
     By Proposition~\ref{prop:Voronoi_cell_boundary_homeomorphic_to_sphere}, $\mathcal{S} \cap \mathcal{B}_{\Vert \cdot \Vert_2, \delta}(y)$ is an $(n-1)$-dimensional manifold. Together with Proposition~\ref{prop:trisector_is_(n-2)-dimensional}, we find some
     \begin{align}
       \label{eq:corners_of_Voronoi_cell_have_points_on_exactly_one_bisector_in_neighborhood}
       z \in \left( \mathcal{S} \cap \mathcal{B}_{\Vert \cdot \Vert_2, \delta}(y) \right) \setminus 
       \left( \bigcup \limits_{1 \leq i_1 < i_2 \leq l} \left( \mathcal{H}_{\Vert \cdot \Vert}^{=} (0, w_{i_1}) \cap \mathcal{H}_{\Vert \cdot \Vert}^{=} (0, w_{i_2}) \right) \right).
     \end{align}
     Thus, there is some $i \in \lbrace 1, \ldots, l \rbrace$ with $\Vert z \Vert = \Vert z-w_i \Vert$. By \eqref{eq:small_environment_around_y}, \eqref{eq:definition_of_S} and \eqref{eq:corners_of_Voronoi_cell_have_points_on_exactly_one_bisector_in_neighborhood}, we have for every $v \in \Lambda \setminus \lbrace 0, w_i \rbrace$ that $\Vert z-v \Vert > \Vert z \Vert$. This means that $w_i$ is Voronoi-relevant, which contradicts $y \in \tilde{\mathcal{V}}^{(i)}(\Lambda, \Vert \cdot \Vert)$.
  This concludes the proof for strictly convex and smooth norms.

     We only used the smoothness assumption to show~\eqref{eq:corners_of_Voronoi_cell_have_points_on_exactly_one_bisector_in_neighborhood} with the help of Proposition~\ref{prop:trisector_is_(n-2)-dimensional}. For two-dimensional lattices, we obtain \eqref{eq:corners_of_Voronoi_cell_have_points_on_exactly_one_bisector_in_neighborhood} from Proposition~\ref{prop:trisector_2dimensions} without smoothness.
   \end{proof}
\begin{proofofVRdefineVC}
  Let $x \in \tilde{\mathcal{V}}(\Lambda, \Vert \cdot \Vert)$ and assume for contradiction that $x \notin \mathcal{V}(\Lambda, \Vert \cdot \Vert)$, i.e., there is some $w \in \Lambda \setminus \lbrace 0 \rbrace$ with $\Vert x \Vert > \Vert x-w \Vert$.
  By the continuity of $y \mapsto \Vert y \Vert$ and the intermediate value theorem, there is $\tau \in (0,1)$ with $\Vert \tau x -w \Vert = \tau \Vert x \Vert$. Lemma~\ref{lem:star} shows that $\tau x \in \tilde{\mathcal{V}}^{(i)}(\Lambda, \Vert \cdot \Vert)$.
  By Theorem~\ref{thm:Voronoi-relevant_vectors_define_strict_Voronoi-cell}, we have $\tau x \in {\mathcal{V}}^{(i)}(\Lambda, \Vert \cdot \Vert)$ and in particular $\tau \Vert x \Vert < \Vert \tau x -w \Vert$, which contradicts the choice of $\tau$.
\end{proofofVRdefineVC}
Finally, we show the bijection between Voronoi-relevant vectors and facets of the Voronoi cell. 
Note that the third condition of Definition~\ref{defn:facet} is not needed under the assumptions of Theorem~\ref{thm:bijection_facets_Voronoi-relevant}. This follows from Propositions~\ref{prop:trisector_2dimensions} and~\ref{prop:trisector_is_(n-2)-dimensional}, as can be seen in the following proof.
\begin{proofofbijection}
  By Proposition~\ref{prop:Voronoi_relevant_has_facet}, it is enough to show that, for every facet $\mathcal{F}$ of $\mathcal{V}(\Lambda, \Vert \cdot \Vert)$, there is a unique Voronoi-relevant vector $v \in \Lambda$ such that $\mathcal{F} = \mathcal{V}(\Lambda, \Vert \cdot \Vert) \cap \mathcal{H}_{\Vert \cdot \Vert}^{=}(0,v)$.
  We show this first for strictly convex and smooth norms.
  For a facet $\mathcal{F}$, there is some $v \in \Lambda \setminus \lbrace 0 \rbrace$ with $\mathcal{F} = \mathcal{V}(\Lambda, \Vert \cdot \Vert) \cap \mathcal{H}_{\Vert \cdot \Vert}^{=}(0,v)$.
  Moreover, there are $x \in \mathcal{F}$ and $\delta \in \mathbb{R}_{>0}$ with $\mathcal{B}_{\Vert \cdot \Vert_2, \delta}(x) \cap \mathcal{H}_{\Vert \cdot \Vert}^{=}(0,v) \subseteq \mathcal{F}$.  
  By Propositions~\ref{prop:trisector_2dimensions} and \ref{prop:trisector_is_(n-2)-dimensional},    
  \begin{align}
    \label{eq:measure0}
    \bigcup \limits_{u \in \Lambda \setminus \lbrace 0,v \rbrace} \left( \mathcal{H}_{\Vert \cdot \Vert}^{=}(0,v) \cap \mathcal{H}_{\Vert \cdot \Vert}^{=}(0,u) \right) \cap \mathcal{B}_{\Vert \cdot \Vert_2, \delta}(x)
  \end{align}
  has measure zero in $\mathcal{B}_{\Vert \cdot \Vert_2, \delta}(x) \cap \mathcal{H}_{\Vert \cdot \Vert}^{=}(0,v)$.
  Hence, there is some $y \in \mathcal{B}_{\Vert \cdot \Vert_2, \delta}(x) \cap \mathcal{H}_{\Vert \cdot \Vert}^{=}(0,v) \subseteq \mathcal{F}$ that is not contained in~\eqref{eq:measure0}.
  This shows that $v$ is the unique vector in $\Lambda \setminus \lbrace 0 \rbrace$ with $\mathcal{F} \subseteq \mathcal{H}_{\Vert \cdot \Vert}^{=}(0,v)$, and that $v$ is Voronoi-relevant.
  This concludes the proof for strictly convex and smooth norms.

  We only used the smoothness assumption to see that \eqref{eq:measure0} has measure zero in $\mathcal{B}_{\Vert \cdot \Vert_2, \delta}(x) \cap \mathcal{H}_{\Vert \cdot \Vert}^{=}(0,v)$.
  For two-dimensional lattices, this follows from Proposition~\ref{prop:trisector_2dimensions} without smoothness.
\end{proofofbijection}

\subsection{Two-Dimensional Lattices}
\label{ssec:2Dim}
First, we focus on strictly convex norms and prove Prooposition~\ref{prop:2dimensions_strictly_convex_exact}.
Gr\"unbaum and Shephard study tilings of the Euclidean plane in~\cite{gruenbaum}.
Such a \emph{tiling} is a countable family of closed sets $\mathcal{T} = \lbrace T_1, T_2, \ldots \rbrace$ (called \emph{tiles}) such that $\bigcup_{i \in \mathbb{N}} T_i = \mathbb{R}^2$ and $\intt(T_i) \cap \intt(T_j) = \emptyset$ for $i \neq j$.
The Voronoi cells around all lattice points of a two-dimensional lattice do not necessarily form a tiling since their interiors might overlap (see Figure~\ref{fig:Voronoi_cell_1norm}).
\begin{lemma}
\label{lem:tiling}
Given a two-dimensional lattice $\Lambda$ and a strictly convex norm $\Vert \cdot \Vert$, the family of all Voronoi cells $\lbrace \mathcal{V}(\Lambda, \Vert \cdot \Vert) + v \mid v \in \Lambda \rbrace$ is a tiling.
\end{lemma}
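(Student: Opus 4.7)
The plan is to verify the two defining properties of a tiling for the family $\mathcal{T} := \lbrace \mathcal{V}(\Lambda, \Vert \cdot \Vert) + v \mid v \in \Lambda \rbrace$: that the union covers $\mathbb{R}^2$, and that distinct translates of the Voronoi cell have disjoint interiors. First, I observe that each translate $\mathcal{V}(\Lambda, \Vert \cdot \Vert) + v$ is closed, since $\mathcal{V}(\Lambda, \Vert \cdot \Vert) = \bigcap_{u \in \Lambda \setminus \lbrace 0 \rbrace} \mathcal{H}^{\leq}_{\Vert \cdot \Vert}(0,u)$ is an intersection of closed halfspaces.

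For the covering property, I will fix any $x \in \mathbb{R}^2$ and invoke discreteness of $\Lambda$ together with continuity of $\Vert \cdot \Vert$ to conclude that there is a lattice vector $v \in \Lambda$ minimizing $\Vert x - v \Vert$. A standard packing argument (similar to the one in the proof of Proposition~\ref{prop:trivial_upper_bound}) shows that the set of candidates for such a minimizer is finite, so the minimum is attained. By the substitution $w = u - v$, the minimality condition $\Vert x - v \Vert \leq \Vert x - u \Vert$ for all $u \in \Lambda$ translates to $x - v \in \mathcal{V}(\Lambda, \Vert \cdot \Vert)$, so $x \in \mathcal{V}(\Lambda, \Vert \cdot \Vert) + v$.

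For the interior disjointness, I will argue by contradiction. Suppose there exist distinct $v_1, v_2 \in \Lambda$ and a point $x$ lying in the interiors of both $\mathcal{V}(\Lambda, \Vert \cdot \Vert) + v_1$ and $\mathcal{V}(\Lambda, \Vert \cdot \Vert) + v_2$. Choose an open Euclidean neighborhood $U$ of $x$ contained in both translates. For every $y \in U$, the fact that $y \in \mathcal{V}(\Lambda, \Vert \cdot \Vert) + v_i$ means $\Vert y - v_i \Vert \leq \Vert y - u \Vert$ for all $u \in \Lambda$; specializing to $u = v_{3-i}$ and combining the two inequalities yields $\Vert y - v_1 \Vert = \Vert y - v_2 \Vert$. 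Therefore the open set $U$ lies entirely in the bisector $\mathcal{H}^{=}_{\Vert \cdot \Vert}(v_1, v_2)$.

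The main (and only real) obstacle is to rule out that such a bisector contains an open subset of $\mathbb{R}^2$. This is precisely where strict convexity enters: by the first part of Proposition~\ref{prop:trisector_2dimensions}, the bisector $\mathcal{H}^{=}_{\Vert \cdot \Vert}(v_1, v_2)$ is homeomorphic to a line in $\mathbb{R}^2$, and by invariance of domain (or directly from the fact that each of the disjoint open halfspaces $\mathcal{H}^{<}_{\Vert \cdot \Vert}(v_1,v_2)$ and $\mathcal{H}^{<}_{\Vert \cdot \Vert}(v_2,v_1)$ approaches every bisector point arbitrarily closely, via the strict inequalities of Lemma~\ref{lem:star} applied after translating $v_1$ to the origin) such a bisector has empty interior. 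This contradicts $U \subseteq \mathcal{H}^{=}_{\Vert \cdot \Vert}(v_1, v_2)$, completing the proof.
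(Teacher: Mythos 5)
Your proof is correct. The covering and closedness steps match the paper's (which simply asserts them as clear), but your treatment of the disjointness of interiors takes a somewhat different route. The paper's proof is a two-line corollary of the boundary characterization $\partial\mathcal{V}(\Lambda,\Vert\cdot\Vert)=\lbrace x\in\mathcal{V}(\Lambda,\Vert\cdot\Vert)\mid\exists v\in\Lambda\setminus\lbrace 0\rbrace:\Vert x\Vert=\Vert x-v\Vert\rbrace$ established in Proposition~\ref{prop:Voronoi_cell_boundary_homeomorphic_to_sphere}: from it, the interior of the Voronoi cell is the strict Voronoi cell $\mathcal{V}^{(i)}$, and strict Voronoi cells around distinct lattice points are trivially disjoint. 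You instead argue directly that two overlapping interiors would force a nonempty open set into the bisector $\mathcal{H}^{=}_{\Vert\cdot\Vert}(v_1,v_2)$, and then rule this out using strict convexity --- either via Proposition~\ref{prop:trisector_2dimensions} (bisector homeomorphic to a line) plus invariance of domain, or more elementarily via the strict inequalities of the second part of Lemma~\ref{lem:star}, which place points of both open halfspaces arbitrarily close to any bisector point. Both arguments ultimately rest on the same ray-scaling fact from Lemma~\ref{lem:star}; the paper's version is shorter because it reuses machinery already built for Proposition~\ref{prop:Voronoi_cell_boundary_homeomorphic_to_sphere}, while yours is self-contained and makes the role of strict convexity (bisectors have empty interior) more explicit. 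Your second justification for the empty-interior claim is the cleaner of the two, since it avoids invoking invariance of domain.
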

\begin{proof}
The Voronoi cells around all lattice points are clearly closed and cover the whole plane. 
By~\eqref{eq:boundary_Voronoi_cell}, the interior of $\mathcal{V}(\Lambda, \Vert \cdot \Vert)$ is the strict Voronoi cell $\mathcal{V}^{(i)}(\Lambda, \Vert \cdot \Vert)$. 
Hence, the interiors of two Voronoi cells around two distinct lattice points do not meet.
\end{proof}
In particular, Gr\"unbaum and Shephard discuss several types of well-behaved tilings. 
They call a tiling $\mathcal{T}$ \emph{normal} if the following three conditions hold:
\begin{enumerate}
\item Every tile in $\mathcal{T}$ is homeomorphic to the closed disc $\overline{\mathcal{B}}_{\Vert \cdot \Vert_2, 1}(0)$. 
\item There are $r,R \in \mathbb{R}_{>0}$ such that every tile in $\mathcal{T}$ contains a disc of radius $r$ and is contained in a disc of radius $R$.
\item The intersection of every two tiles in $\mathcal{T}$ is connected.
\end{enumerate}
\begin{lemma}
The tiling in Lemma~\ref{lem:tiling} is normal.
\end{lemma}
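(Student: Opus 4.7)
The plan is to verify each of the three defining conditions of a normal tiling for the Voronoi tiling. By translation invariance of the tiling, it suffices to check conditions~1 and~2 for $\mathcal{V} := \mathcal{V}(\Lambda, \Vert \cdot \Vert)$, and condition~3 for intersections of the form $\mathcal{V} \cap (\mathcal{V} + v)$ with $v \in \Lambda \setminus \lbrace 0 \rbrace$.

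For condition~2, observe that for every $x \in \mathcal{V}$ one has $\Vert x \Vert \leq \mu(\Lambda, \Vert \cdot \Vert)$ by the definition of the covering radius and~\eqref{def-VC}, so $\mathcal{V}$ is contained in a ball of radius $\mu(\Lambda, \Vert \cdot \Vert)$ around $0$; passing to the Euclidean norm using equivalence of norms on~$\mathbb{R}^2$ yields the desired uniform $R$. Conversely, $\mathcal{B}_{\Vert \cdot \Vert, \lambda_1(\Lambda, \Vert \cdot \Vert)/2}(0) \subseteq \mathcal{V}$ by the triangle inequality, which again via norm equivalence produces a uniform~$r$.

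For condition~1, Proposition~\ref{prop:Voronoi_cell_boundary_homeomorphic_to_sphere} provides a homeomorphism $\varphi: \partial \mathcal{V} \to S^1$, and the proof of that proposition actually comes from central projection, so $\varphi^{-1}$ is given by a radial scaling. I would extend this radially to the whole cell: since $\mathcal{V}$ is star-shaped with center $0$ by Lemma~\ref{lem:star} and compact, one obtains a continuous bijection $\overline{\mathcal{B}}_{\Vert \cdot \Vert_2,1}(0) \to \mathcal{V}$ by sending $\lambda u$ (for $u \in S^1$, $\lambda \in [0,1]$) to $\lambda \varphi^{-1}(u)$. This is a continuous bijection between compact Hausdorff spaces, hence a homeomorphism.

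The main work is condition~3. First I would show $\mathcal{V} \cap (\mathcal{V}+v) = \mathcal{V} \cap \mathcal{H}^{=}_{\Vert \cdot \Vert}(0,v)$: if $x$ lies in both cells, then $\Vert x \Vert$ minimises the distance from $x$ to $\Lambda$ and $\Vert x - v \Vert$ minimises the distance from $x$ to $\Lambda$, forcing $\Vert x\Vert = \Vert x - v \Vert$. So it suffices to show that $\mathcal{V} \cap \mathcal{H}^{=}_{\Vert \cdot \Vert}(0,v)$ is connected. By the first part of Proposition~\ref{prop:trisector_2dimensions}, the bisector $\mathcal{H}^{=}_{\Vert \cdot \Vert}(0,v)$ is homeomorphic to $\mathbb{R}$; fix a parameterisation $\gamma:\mathbb{R} \to \mathcal{H}^{=}_{\Vert \cdot \Vert}(0,v)$. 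For every $w \in \Lambda \setminus \lbrace 0,v \rbrace$, any point in $\mathcal{H}^{=}_{\Vert \cdot \Vert}(0,v) \cap \mathcal{H}^{=}_{\Vert \cdot \Vert}(0,w)$ automatically lies in $\mathcal{H}^{=}_{\Vert \cdot \Vert}(v,w)$, so by the second part of Proposition~\ref{prop:trisector_2dimensions} the bisector $\mathcal{H}^{=}_{\Vert \cdot \Vert}(0,w)$ meets $\mathcal{H}^{=}_{\Vert \cdot \Vert}(0,v)$ in at most one point. Applying the Jordan curve theorem to the one-point compactification of $\mathcal{H}^{=}_{\Vert \cdot \Vert}(0,w)$, the open halfspace $\mathcal{H}^{<}_{\Vert \cdot \Vert}(0,w)$ is one connected component of its complement, and therefore $\gamma^{-1}(\mathcal{H}^{\leq}_{\Vert \cdot \Vert}(0,w))$ is either all of~$\mathbb{R}$ or a closed half-line of~$\mathbb{R}$. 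Hence $\gamma^{-1}\!\left(\mathcal{V} \cap \mathcal{H}^{=}_{\Vert \cdot \Vert}(0,v)\right)$ is the intersection of countably many closed intervals of~$\mathbb{R}$, which is an interval and therefore connected; bounded\-ness follows from condition~2. The main obstacle is precisely this step, where one must rule out that the bisector $\mathcal{H}^{=}_{\Vert \cdot \Vert}(0,v)$ re-enters $\mathcal{V}$ after leaving it, and I expect Proposition~\ref{prop:trisector_2dimensions} to be the decisive tool.
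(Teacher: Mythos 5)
Your proof is correct and rests on the same two pillars as the paper's: Proposition~\ref{prop:trisector_2dimensions} (bisectors are homeomorphic to lines and meet pairwise in at most one point) and the star-shapedness from Lemma~\ref{lem:star}. Conditions~1 and~2 are handled exactly as in the paper, only more explicitly (the paper merely asserts that the boundary homeomorphism ``can be extended'' to the disc, which is precisely your radial extension). For condition~3 your organization differs: the paper assumes $\mathcal{V}_v \cap \mathcal{V}_w$ is disconnected and extracts, via the intermediate value theorem along a parameterization of $\mathcal{H}^{=}_{\Vert \cdot \Vert}(v,w)$, two intersection points with a second bisector, contradicting Proposition~\ref{prop:trisector_2dimensions}; you instead identify $\mathcal{V} \cap (\mathcal{V}+v)$ with $\mathcal{V} \cap \mathcal{H}^{=}_{\Vert \cdot \Vert}(0,v)$ and exhibit its preimage under $\gamma$ as an intersection of intervals. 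Both work, and your version is arguably cleaner because it proves connectedness directly rather than by contradiction. Two small remarks. First, the Jordan curve theorem is overkill and, as invoked, slightly under-justified: what you actually need is that the complement of $\mathcal{H}^{=}_{\Vert \cdot \Vert}(0,w)$ is the disjoint union of the two open halfspaces $\mathcal{H}^{<}_{\Vert \cdot \Vert}(0,w)$ and $\mathcal{H}^{<}_{\Vert \cdot \Vert}(w,0)$, each connected because each is star-shaped by Lemma~\ref{lem:star}; combined with the at-most-one intersection point this gives the interval conclusion, or one can bypass the topology entirely with the same intermediate-value argument the paper uses. Second, $\gamma^{-1}(\mathcal{H}^{\leq}_{\Vert \cdot \Vert}(0,w))$ can also be empty or a single point rather than a half-line or all of $\mathbb{R}$, but these are still intervals, so your conclusion stands.
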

\begin{proof}
The homeomorphism from Proposition~\ref{prop:Voronoi_cell_boundary_homeomorphic_to_sphere} can be extended from the whole Voronoi cell to the disc $\overline{\mathcal{B}}_{\Vert \cdot \Vert_2, 1}(0)$.
This proves the first condition.
The Voronoi cell $\mathcal{V}(\Lambda, \Vert \cdot \Vert)$ is clearly bounded. 
Moreover, since the origin is in the interior of $\mathcal{V}(\Lambda, \Vert \cdot \Vert)$, this Voronoi cell contains some disc with positive radius around the origin.
This shows condition two.

For the third condition, we assume for contradiction that the intersection of the two Voronoi cells $\mathcal{V}_v := \mathcal{V}(\Lambda, \Vert \cdot \Vert)+v$ and $\mathcal{V}_w := \mathcal{V}(\Lambda, \Vert \cdot \Vert)+w$ for distinct $v,w \in \Lambda$ is not connected.
This intersection is contained in the bisector $\mathcal{H}_{\Vert \cdot \Vert}^{=}(v,w)$, which is the image of a homeomorphism $\varphi$ with domain $\mathbb{R}$ by Proposition~\ref{prop:trisector_2dimensions}.
The preimage $\varphi^{-1}(\mathcal{V}_v \cap \mathcal{V}_w)$ consists of at least two disjoint closed intervals $I_1$ and $I_2$.
Without loss of generality, all points in $I_1$ are smaller than points in $I_2$.
Let $y_1$ be the maximal value in $I_1$ and $y_2$ be the minimal value in $I_2$. 
We pick a point $x \in (y_1, y_2)$ such that $\varphi(x) \notin \mathcal{V}_v \cap \mathcal{V}_w$.
Thus, there is some $u \in \Lambda$ such that $\Vert \varphi(x)-u \Vert <\Vert \varphi(x)-v \Vert$.
Since $\Vert \varphi(y_i)-u \Vert \geq \Vert \varphi(y_i)-v \Vert$ holds for $i \in \lbrace 1,2 \rbrace$,
the intermediate value theorem implies that there are $x_1 \in [y_1, x)$ and $x_2 \in (x, y_2]$ with $\Vert \varphi(x_i)-u \Vert =\Vert \varphi(x_i)-v \Vert$ for $i \in \lbrace 1,2 \rbrace$.
Hence, the bisectors $\mathcal{H}_{\Vert \cdot \Vert}^{=}(v,w)$ and $\mathcal{H}_{\Vert \cdot \Vert}^{=}(u,v)$ intersect in at least two points, which contradicts Proposition~\ref{prop:trisector_2dimensions}.
\end{proof}
Two tiles of a normal tiling are called \emph{adjacent} if their intersection contains infinitely many points.
Proposition~\ref{prop:2dimensions_strictly_convex_exact} is an immediate corollary of the following theorem and Theorem~\ref{thm:bijection_facets_Voronoi-relevant}.
\begin{theorem}[\cite{gruenbaum}, Theorem 3.2.6]
\label{thm:gruenbaum}
If $\mathcal{T}$ is a normal tiling in which each tile has the same number $k$ of adjacent tiles, then $k \in \lbrace 3,4,5,6 \rbrace.$
\end{theorem}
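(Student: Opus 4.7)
The plan is a classical Euler-characteristic-plus-double-counting argument, combined with the Normality Lemma that controls boundary effects in expanding patches. First I would fix any tile $T_0$ and, for each $n\in\mathbb{Z}_{>0}$, let $A_n$ denote the patch consisting of all tiles that meet the Euclidean disc of radius $n$ around a point of $T_0$. Let $t_n$, $e_n$, $v_n$ denote the numbers of tiles, edges, and vertices in $A_n$ (where edges and vertices are defined combinatorially from the tiling). The second normality condition (uniform inner and outer radii $r,R$) forces $t_n=\Theta(n^2)$ while the number of tiles touching the boundary circle grows only as $O(n)$; this yields the Normality Lemma, namely that the count of ``boundary'' tiles is $o(t_n)$.

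Next I would assemble three near-equalities. Because $A_n$ can be taken to be simply connected (replacing it by its union of tiles, which is homeomorphic to a disc up to boundary changes of lower order), Euler's formula for planar CW-complexes gives $v_n-e_n+t_n=1+o(t_n)$. Double counting edge-tile incidences, together with the hypothesis that every tile has exactly $k$ adjacent tiles and the third normality condition (intersections of tiles are connected, so each adjacency contributes precisely one edge on the boundary of $T$), yields $2e_n=k\,t_n+o(t_n)$. Finally, double counting vertex-edge incidences together with the fact that each vertex of a normal tiling has valence at least three gives $3v_n\le 2e_n+o(t_n)$.

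Dividing the three relations by $t_n$ and passing to the limit then gives $v_n/t_n\to k/2-1$ and $e_n/t_n\to k/2$, and the valence inequality becomes $3(k/2-1)\le k$, i.e.\ $k\le 6$. For the lower bound $k\ge 3$, I would argue directly: a tile $T$ is homeomorphic to a closed disc, and its boundary circle is partitioned by the edges into arcs, each arc being the connected intersection of $T$ with one neighbor. If $T$ had at most two neighbors, the boundary circle would decompose into at most two arcs shared with other tiles; combining this with the bounded-diameter condition and the fact that the complements of these shared arcs inside $\partial T$ would have to be covered by further tiles (forcing further neighbors), one reaches a contradiction.

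The main obstacle is making the ``$o(t_n)$'' error terms in the double counts rigorous, i.e.\ proving the Normality Lemma from the three defining conditions. The homeomorphism-to-a-disc condition is needed to apply Euler's formula cleanly, the uniform radii condition is what makes the ratio of boundary tiles to interior tiles vanish in the limit, and the connected-intersection condition is what ensures that the combinatorial adjacency count $k$ coincides with the number of boundary edges of each tile. Once these technical inputs are in place, the averaging identity $v_n/t_n\to k/2-1$ combined with the vertex-valence inequality closes the argument and pins $k$ to $\{3,4,5,6\}$.
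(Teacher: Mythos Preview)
The paper does not prove this statement; it is quoted from Gr\"unbaum--Shephard and used as a black box in the proof of Proposition~\ref{prop:2dimensions_strictly_convex_exact}. Your outline for $k\le 6$ is exactly the classical argument from that source (Normality Lemma controlling boundary effects, Euler's formula on large patches, the double count $2e_n=k\,t_n+o(t_n)$, and valence $\ge 3$ at each vertex), and it is correct as written.

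Your direct argument for $k\ge 3$, however, has a gap. If a tile $T$ has only two neighbours $T_1,T_2$, then since the tiles cover the plane and pairwise intersections are connected, the two arcs $T\cap T_1$ and $T\cap T_2$ already cover \emph{all} of $\partial T$; there is no ``complement of the shared arcs'' left over to be filled by further tiles, so no contradiction arises where you indicate. The cleanest repair stays inside your own asymptotic framework: from $v_n/t_n\to k/2-1\ge 0$ one gets $k\ge 2$ immediately, and if $k=2$ then every tile carries exactly two vertices on its boundary while (by the uniform inner-radius bound) each vertex lies in only boundedly many tiles, so $v_n/t_n$ is bounded below by a positive constant, contradicting $v_n/t_n\to 0$. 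A direct topological argument is also possible but is not the one you sketched: the two boundary arcs of $T$ must overlap in at least two points $p\neq q$ of $T_1\cap T_2$, so by the connected-intersection axiom $T_1\cap T_2$ is infinite and $T_1,T_2$ are adjacent; then $T,T_1,T_2$ are mutually adjacent with no further neighbours, and the boundary of the bounded set $T\cup T_1\cup T_2$ cannot be covered by the finitely many nearby tiles, each of which meets $T\cup T_1\cup T_2$ in at most three isolated points.
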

\begin{proofof6VR}
It follows directly from Definition~\ref{defn:facet} that the facets of the Voronoi cell $\mathcal{V}(\Lambda, \Vert \cdot \Vert)$ are exactly the intersections of this cell with its adjacent cells.
Applying Theorem~\ref{thm:gruenbaum} to the tiling in Lemma~\ref{lem:tiling} shows that $\mathcal{V}(\Lambda, \Vert \cdot \Vert)$ has between three and six facets.
The number of facets must be even, because $\mathcal{F} \subseteq \mathcal{V}(\Lambda, \Vert \cdot \Vert)$ is a facet if and only if $-\mathcal{F}$ is a facet.
Now Proposition~\ref{prop:2dimensions_strictly_convex_exact} follows from Theorem~\ref{thm:bijection_facets_Voronoi-relevant}.
\end{proofof6VR}
Already for the Euclidean norm there are two-dimensional lattices with four and with six Voronoi-relevant vectors: the lattice in Figure~\ref{fig:Voronoi_cell_1norm} has six facets with respect to the Euclidean norm, whereas the lattice spanned by $\left( \begin{smallmatrix} 1 \\ 0 \end{smallmatrix} \right)$ and $\left( \begin{smallmatrix} 0 \\ 1 \end{smallmatrix} \right)$ has only four facets.

We saw in Subsection~\ref{ssec:genNorms} that the Voronoi-revelant vectors are generally not enough to define the Voronoi cell for non-strictly convex norms, but that the weak Voronoi-relevant vectors are. Hence, we would like to have a constant upper bound for the number of the weak Voronoi-relevant vectors in two dimensions, analogously to Proposition~\ref{prop:2dimensions_strictly_convex_exact}. Unfortunately, this is not true.

\begin{proofofgenVRcomb}
  Let $\mathcal{L}_m := \mathcal{L}(b_{m,1}, b_{m,2})$ with $b_{m,1} = (1,1)^T$ and $b_{m,2} = (0,m)^T$. Furthermore, let $x := \frac{1}{2} b_{m,2}$. By considering all $v \in \mathcal{L}_m$ with $\Vert x-v \Vert_1 \leq \frac{m}{2}$, one finds two outcomes: First, there is no $v \in \mathcal{L}_m$ satisfying the strict inequality $\Vert x-v \Vert_1 < \frac{m}{2}$. Secondly, the equality $\Vert x-v \Vert_1 = \frac{m}{2}$ holds exactly for $v = z_1 b_{m,1} + z_2 b_{m,2}$ with $z_2 = 0$ and $z_1 \in [0, \frac{m}{2}] \cap \mathbb{Z}$ or $z_2 = 1$ and $z_1 \in [- \frac{m}{2},0] \cap \mathbb{Z}$. Therefore, $z_1b_{m,1}+z_2b_{m,2} \in \mathcal{L}_m$ is a weak Voronoi-relevant vector if $z_2 = 0, z_1 \in \left( 0, \frac{m}{2} \right]$ or $z_2 = 0, z_1 \in \left[-\frac{m}{2}, 0 \right)$ or $z_2 = 1, z_1 \in \left[-\frac{m}{2},0 \right]$ or $z_2 = -1, z_1 \in \left[ 0, \frac{m}{2} \right]$.
\end{proofofgenVRcomb}

\section*{Acknowledgments}
We thank Felix Dorrek for valuable discussions and the anonymous referees for very useful hints.

\bibliographystyle{siamplain}
\bibliography{references}
\end{document}